\documentclass[11pt,a4paper,reqno]{amsart} 
\usepackage[applemac]{inputenc}
\usepackage{amsmath}
\usepackage{amssymb}
\usepackage{euscript}
\usepackage{mathrsfs}
\usepackage{wasysym}
\usepackage[all]{xy}
\usepackage{color}
\usepackage{rotating}
\usepackage[colorlinks=true,linktocpage=true,pagebackref=false, citecolor=black,linkcolor=black]{hyperref}
\usepackage{tikz}
\usepackage{caption,subcaption}

\voffset=-0.75cm
\hoffset=-1.5cm
\setlength{\textwidth}{16cm}
\setlength{\textheight}{23.5cm}

\clubpenalty=10000
\widowpenalty=10000
\raggedbottom

\parskip=1.2ex


\newtheorem{mthm}{Theorem}
\newtheorem{mcor}[mthm]{Corollary}
\newtheorem{mlem}[mthm]{Lemma}
\newtheorem{mlemdef}[mthm]{Lemma and Definition}
\newtheorem{thm}{Theorem}[section]
\newtheorem{lem}[thm]{Lemma}
\newtheorem{lemdef}[thm]{Lemma and Definition}

\newtheorem{cor}[thm]{Corollary}

\theoremstyle{definition}

\theoremstyle{remark}
\newtheorem{remark}[thm]{Remark}

\newtheorem*{mremark}{Remark}
\newtheorem{example}[thm]{Example}
\newtheorem*{mexample}{Example}
\newtheorem{examples}[thm]{Examples}

\renewcommand{\theequation}{\thesection.\arabic{equation}}
\numberwithin{equation}{section}

{\em}

{\em}

\newcounter{substep}
\def\thesubstep{\arabic{substep}}

\newenvironment{substeps}[1]{%
\refstepcounter{substep}\noindent{(\ref{#1}.\thesubstep)\ }\ }%
{\em}


\newcommand{\K}{{\mathbb K}} 
 \newcommand{\R}{{\mathbb R}}
 \newcommand{\C}{{\mathbb C}}


\newcommand{\psd}{{\mathcal P}} 
\newcommand{\an}{{\mathcal O}}

 \newcommand{\Cont}{{\mathcal C}}


\newcommand{\gtp}{{\mathfrak p}} \newcommand{\gtq}{{\mathfrak q}}
 
\newcommand{\gta}{{\mathfrak a}} 
\newcommand{\gtP}{{\mathfrak P}}


\newcommand{\Bb}{{\EuScript B}}

\newcommand{\Ss}{{\EuScript S}}
\newcommand{\pol}{{\EuScript K}}

\newcommand{\Vv}{{\EuScript V}}


\newcommand{\im}{\operatorname{im}}
\newcommand{\qf}{\operatorname{qf}}

\newcommand{\Int}{\operatorname{Int}}
\newcommand{\dist}{\operatorname{dist}}

\newcommand{\supp}{\operatorname{support}}

\newcommand{\Spec}{\operatorname{Spec}}

\newcommand{\Sper}{\Spec_r}
\newcommand{\tr}{\operatorname{tr}}

\newcommand{\id}{\operatorname{id}}

\newcommand{\zar}{\operatorname{zar}}
\newcommand{\gr}{\operatorname{graph}}
\newcommand{\cl}{\operatorname{Cl}}

\newcommand{\dgt}{\operatorname{d}}

\newcommand{\lc}{\operatorname{lc}}
\newcommand{\diam}{{\text{\tiny$\displaystyle\diamond$}}}


\newcommand{\x}{{\tt x}} \newcommand{\y}{{\tt y}} 
 \renewcommand{\t}{{\tt t}}

\def\sqbullet{\raise.2ex\hbox{\vrule width 3.5pt height 3.5pt}}

\newcommand{\veps}{\varepsilon}

\newcommand{\ol }{\overline}
\newcommand{\cc}[1]{[{#1}]}
\newcommand{\qq}[1]{\langle{#1}\rangle}

\begin{document}

\title[On the substitution theorem for rings of semialgebraic functions]{On the substitution theorem \\ for rings of semialgebraic functions}

\author{Jos\'e F. Fernando}
\address{Departamento de \'Algebra, Facultad de Ciencias Matem\'aticas, Universidad Complutense de Madrid, 28040 MADRID (SPAIN)}
\curraddr{}
\email{josefer@mat.ucm.es}
\thanks{Author supported by Spanish GR MTM2011-22435.}

\subjclass[2010]{Primary 14P10, 54C30; Secondary 12D15, 13E99}
\keywords{Semialgebraic set, ring of semialgebraic functions, extension of coefficients, evaluation homomorphisms, Substitution Theorem, weak continuous extension property, ring of bounded semialgebraic function, semialgebraic pseudo-compactification}

\begin{abstract}
Let $R\subset F$ be an extension of real closed fields and ${\mathcal S}(M,R)$ the ring of (continuous) semialgebraic functions on a semialgebraic set $M\subset R^n$. We prove that every $R$-homomorphism $\varphi:{\mathcal S}(M,R)\to F$ is essentially the evaluation homomorphism at a certain point $p\in F^n$ \em adjacent \em to the extended semialgebraic set $M_F$. This type of result is commonly known in Real Algebra as Substitution Theorem. In case $M$ is locally closed, the results are neat while the non locally closed case requires a more subtle approach and some constructions (weak continuous extension theorem, \em appropriate immersion \em of semialgebraic sets) that have interest on their own. We afford the same problem for the ring of bounded (continuous) semialgebraic functions getting results of a different nature. 
\end{abstract}

\maketitle

\section*{Introduction and statements of the main results }\label{s1}

A basic relevant result in Commutative Algebra states that given a ring extension $A\subset B$, every $A$-homomorphism $\varphi:A[\x_1,\ldots,\x_n]\to B$ is completely determined by the images of the variables $\x_1,\ldots,\x_n$, that is, it is a point-evaluation homomorphism. This result is extended straightforwardly to finitely generated algebras over a ring $A$ but it also holds in many other situations. For instance, the following types of homomorphims are point-evaluation homomorphisms: (1) $R$-homomorphisms from a ring of Nash functions on a Nash submanifold of $R^n$ into a real closed extension $F$ of $R$ (see \cite{e}); (2) $\K$-analytic homomorphisms from the ring $\an(\K^n)$ of germs of analytic functions on $\K^n$ into the ring $\an(X)$ of germs of analytic functions on an analytic germ $X$ where $\K=\R$ or $\C$ (see \cite{z}); (3) $\R$-homomorphisms of the ring of smooth functions $\Cont^{\infty}(N,\R)$ on a differentiable manifold $N$ (or an open subset of a Banach space) into $\R$ (see \cite{bl,kms}); (4) $\R$-homomorphisms of the ring of $\Cont^k$ functions on an open subset $U$ of a Banach space into $\R$ under mild conditions (see \cite{gl,jaja}).

In Real Algebra the previous type of results is commonly known as `Substitution Theorem' referring to Efroymson's classical result \cite{e} for the ring of Nash functions on a Nash manifold cited above. In \cite[\S6.1]{fg} we prove that this result also holds for the ring of Nash functions on a semialgebraic set $M$, involving the pro-constructible set obtained by intersecting the Nash closures of $M$ in all open semialgebraic subsets of $R^n$ containing $M$; as one can expect, the result is completely satisfactory if and only if $M$ is a Nash set.

In what follows, let $R\subset F$ be an extension of real closed fields and $M\subset R^n$ a semialgebraic set, that is, a boolean combination of sets defined by polynomial equations and inequalities. A continuous function $f:M\to R$ is \emph{semialgebraic} if its graph is a semialgebraic subset of $R^{n+1}$. We denote the ring of semialgebraic functions on $M$ by ${\mathcal S}(M,R)$ and its subring consisting of those that are bounded by ${\mathcal S}^*(M,R)$. We use the notation ${\mathcal S}^{\diam}(M,R)$ when referring to both of them indistinctly.

In this work we afford the problem of understanding the $R$-homomorphisms $\varphi:{\mathcal S}(M,R)\to F$ in terms of evaluation homomorphisms at points ${\tt p}\in F^n$, which are either in the extension $M_F$ of $M$ to $F$ or `very close to this set' and `fill a big area' (see below the definition of semialgebraic depth of a point of $F^n$). We introduce and recall first several concepts to ease the exposition.

\subsection*{Real closed rings and real closure of a ring} It is well-known that the rings ${\mathcal S}^{\diam}(M,R)$ are particular cases of the so-called \em real closed rings \em introduced by Schwartz in the '80s of the last century, see \cite{s0}. The theory of real closed rings has been deeply developed until now in a fruitful attempt to establish new foundations for semi-algebraic geometry with relevant interconnections to model theory, see the results of Cherlin-Dickmann \cite{cd1,cd2}, Schwartz \cite{s0,s1,s2,s3}, Schwartz with Prestel, Madden and Tressl \cite{ps,sm,scht} and Tressl \cite{t0,t1,t2}. We refer the reader to \cite{s1} for a ring theoretic analysis of the concept of real closed ring. Moreover, this theory, which vastly generalizes the classical techniques concerning the semi-algebraic spaces of Delfs-Knebusch \cite{dk2}, provides a powerful machinery to approach problems concerning certain rings of real valued functions and contributes to achieve a better understanding of the algebraic properties of such rings and the topological properties of their spectra. We highlight some relevant families of real closed rings: (1) real closed fields; (2) rings of real-valued continuous functions on Tychonoff spaces; (3) rings of semi-algebraic functions on semi-algebraic subsets of $R^n$; and more generally (4) rings of definable continuous functions on definable sets in o-minimal expansions of fields.

Every commutative ring $A$ has a so called \em real closure \em ${\rm rcl}(A)$, which is unique up to a unique ring homomorphism over $A$. This means that ${\rm rcl}(A)$ is a real closed ring and there is a (not necessarily injective) ring homomorphism $\gamma:A\to{\rm rcl}(A)$ such that for every ring homomorphism $\psi:A\to B$ to some other real closed ring $B$ there exists a unique ring homomorphism $\ol{\psi}:{\rm rcl}(A)\to B$ with $\psi=\ol{\psi}\circ\gamma$. For example, the real closure of the polynomial ring $R[\x]:=R[\x_1,\ldots,\x_n]$ is the ring ${\mathcal S}(R^n,R)$ of semialgebraic functions on $R^n$. More generally, if $Z\subset R^n$ is an algebraic set, then ${\mathcal S}(Z,R)$ is the real closure of the ring $\psd(Z)$ of polynomial functions on $Z$. In particular, if $\varphi:\psd(Z)\to F$ is an $R$-homomorphism, then there exists a unique $R$-homomorphism $\ol{\varphi}:{\mathcal S}(Z,R)\to F$ such that $\varphi=\ol{\varphi}\circ\gamma$ where $\gamma:\psd(Z)\hookrightarrow{\mathcal S}(Z,R)$ is the natural inclusion. Unfortunately, ${\mathcal S}(M,R)$ is not the real closure of $\psd(M)$ for an arbitrary semialgebraic set $M$ in general. 
\begin{mexample}
Consider the semialgebraic subsets $M:=\{x^2+y^2<1\}$ and $K:=\{x^2+y^2\leq1\}$ of $R^2$. We have that $R[\x,\y]=\psd(R^2)=\psd(K)=\psd(M)$ and so the real closure of this ring is ${\mathcal S}(R^2,R)$ but in the first row of the following diagram
$$
\xymatrix{
{\mathcal S}(R^2,R)\ar@{->>}[r]&{\mathcal S}(K,R)\ar@{^(->}[r]&{\mathcal S}^*(M,R)\ar@{^(->}[r]&{\mathcal S}(M,R)\\
R[\x,\y]\ar@{^(->}[u]\ar@{=}[r]&\psd(K)\ar@{^(->}[u]\ar@{=}[r]&\psd(M)\ar@{^(->}[u]\ar@{^(->}[ru]
}
$$
none of the involved homomorphisms is bijective.\qed 
\end{mexample}
There are many `polynomial' subrings of ${\mathcal S}(M,R)$ corresponding to all semialgebraic embeddings of $M$ in some $R^n$. The choice of a suitable one will be crucial for our purposes.

\subsection*{Extension of coefficients} 
There exists a (unique) semialgebraic subset $M_{F}\subset F^n$ called the \em extension of $M$ to $F$ \em that satisfies $M=M_{F}\cap R^n$. The extension of semialgebraic sets depicts the natural expected behavior with respect to boolean operations, interiors, closures, boundedness, semialgebraically connected components, Transfer Principle, etc. (see \cite[\S5.1-3]{bcr}). Moreover, given another semialgebraic set $N\subset R^n$ and a semialgebraic map $f:M\to N$, there exists a semialgebraic map $f_{F}:M_{F}\to N_{F}$ called \em extension of $f$ to $F$ \em that fulfills $f_{F}|_M=f$. The extension of semialgebraic maps enjoys the natural expected behavior with respect to direct and inverse image, continuity, injectivity, surjectivity, bijectivity, etc. (see \cite[\S5.1-3]{bcr}). Summarizing: `Every property that can be expressed in the first-order language of ordered fields with parameters in $R$ can be transferred to $F$' (\cite[5.2.3]{bcr}). We refer the reader to \cite{dk0} and \cite[\S5]{bcr} for a complete study of the extension to $F$. By \cite[7.3.1]{bcr}, the extension of semialgebraic functions to $F$ induces a well-defined $R$-monomorphism 
$$
{\tt i}_{M,F}:{\mathcal S}(M,R)\hookrightarrow{\mathcal S}(M_{F},F),\ f\mapsto f_{F}. 
$$
This observation together with the evaluation homomorphism ${\rm ev}_{M_F,\tt p}:{\mathcal S}(M_F,F)\to F,\ g\mapsto g({\tt p})$ for ${\tt p}\in M_{F}$ provides the natural $R$-homomorphism $\psi_{\tt p}:={\rm ev}_{M_F,\tt p}\circ {\tt i}_{M,F}:{\mathcal S}(M,R)\to F$.

For each $i=1,\ldots,n$ consider the $i$th-projection $\pi_i:M\to R,\ x:=(x_1,\ldots,x_n)\mapsto x_i$. Given an $R$-homomorphism $\varphi:{\mathcal S}(M,R)\to F$, the point ${\tt p}_{\varphi}:=(\varphi(\pi_1),\ldots,\varphi(\pi_n))\in F^n$ will be called the \em core of $\varphi$\em. Our purpose is to understand under which conditions $\varphi$ is completely determined by its core or, in other words, when $\varphi$ coincides with $\psi_{\tt p_\varphi}$. We still need some extra terminology.

We say that a point ${\tt p}\in F^n$ is \em adjacent to \em $M$ if ${\tt p}\in N_{F}$ for each locally closed semialgebraic set $N\subset R^n$ that contains $M$. The pro-constructible set $\widehat{M_{F}}$ of points of $F^n$ that are adjacent to $M$ will be called the \em adjancence of $M$ in $F^n$\em; clearly, it holds $M_F\subset\widehat{M_{F}}\subset\cl(M)_F$. Of course, if $M$ is locally closed or if $R=F$, then $M_{F}=\widehat{M_{F}}$. We will see in Lemma \ref{adjacent} that the core of an $R$-homomorphism $\varphi:{\mathcal S}(M,R)\to F$ always belongs to $\widehat{M_{F}}$; hence, in case $M$ is locally closed, then ${\tt p}_\varphi\in M_F$. For further properties of the set $\widehat{M_{F}}$ see \cite[I.3.20]{s2}.

We will also need to measure `how much space' a point ${\tt p}\in M_F$ `fills' in $M$ (see Example \ref{space}) and to that end we define the \em semialgebraic depth \em of $\tt p$ as
\renewcommand{\theequation}{I.\arabic{equation}}
\begin{equation}\label{eqdM}
{\rm d}_M({\tt p}):=\min\{\dim(N):\,N\subset M\text{ is a closed semialgebraic subset of $M$ and }\ {\tt p}\in N_F\}.
\end{equation}\renewcommand{\theequation}{\thesection.\arabic{equation}}
As we state in (\ref{sdtd}.\ref{rmd}), if $M$ is closed and bounded in $R^n$, the semialgebraic depth ${\rm d}_M({\tt p})$ has a further algebraic meaning: \em ${\rm d}_M({\tt p})=\tr\deg_R(R({\tt p}))$ where $R({\tt p})$ denotes the smallest subfield of $F$ containing $R$ and the coordinates of the tuple ${\tt p}$\em.

\subsection*{Main results} 
It is clear that the core of an $R$-homomorphism ${\mathcal S}(M,R)\to F$ depends on how $M$ is embedded in $R^n$. Thus, a `good immersion' of $M$ in $R^n$ increases the possibilities that the core completely determines the corresponding $R$-homomorphism. Having this in mind, we introduce the following concept. A semialgebraic set $M\subset R^n$ is \em appropriately embedded \em if it is bounded and for each point $q\in\cl(M)\setminus M$ the germ $M_q$ is semialgebraically connected and either $\cl(M)_q=M_q$ or $\dim(\cl(M)_q\setminus M_q)=\dim(M_q)-1$. We will show in Theorem \ref{se0} that every semialgebraic subset of $R^n$ can be appropriately embedded in $R^n$. We expect that this type of embeddings has further applications than the ones described in this article. After this preliminary exposition, we are ready to state our main results:

\begin{mlem}[Substitution Lemma]\label{st1}
Let ${\tt p}\in F^n$ be adjacent to $M$.
\begin{itemize}

\item[(i)] If ${\tt p}\in M_{F}$, then $\psi_{\tt p}:={\rm ev}_{M_F,{\tt p}}\circ{\tt i}_{M,F}$ is the unique $R$-homomorphism ${\mathcal S}(M,R)\to F$ whose core is ${\tt p}$. In particular, if ${\rm d}_{\cl(M)}({\tt p})=\dim(M_{F,{\tt p}})$, then ${\tt p}\in M_{F}$.

\item[(ii)] If $M$ is appropriately embedded, ${\tt p}\not\in M_{F}$ and ${\rm d}_{\cl(M)}({\tt p})=\dim(M_{F,{\tt p}})-1$, there exists exactly one $R$-homomorphism $\psi_{\tt p}:{\mathcal S}(M,R)\to F$ whose core is ${\tt p}$. Moreover, for each $f\in{\mathcal S}(M,R)$ there exists a semialgebraic set $M\subset M_f\subset\cl(M)$ such that ${\tt p}\in M_{f,F}$, $f$ can be extended to $\widehat{f}\in{\mathcal S}(M_f,R)$ and $\psi_{\tt p}(f)=\widehat{f}_F({\tt p})$.

\item[(iii)] If ${\tt p}\not\in M_{F}$ and ${\rm d}_{\cl(M)}({\tt p})\leq\dim(M_{F,{\tt p}})-2$, there exist infinitely many $R$-homomorphisms ${\mathcal S}(M,R)\to F$ whose core is {\tt p}.
\end{itemize}
\end{mlem}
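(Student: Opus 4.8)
The plan is to treat the three regimes according to the gap between the semialgebraic depth $d_{\cl(M)}(\tt p)$ and the local dimension $\dim(M_{F,\tt p})$, reducing as much as possible to the well-understood closed bounded case.

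\emph{Part (i).} First I would prove uniqueness. Suppose $\varphi,\varphi':\mathcal{S}(M,R)\to F$ are two $R$-homomorphisms with common core $\tt p\in M_F$. The key observation is that for $f\in\mathcal{S}(M,R)$ the sign condition $f\geq 0$ on $M$ translates (via the $R$-monomorphism $\tt i_{M,F}$ and transfer) into $f_F\geq 0$ on $M_F$, hence $f_F(\tt p)\geq 0$; and conversely, if $f_F(\tt p)=0$ then since $\varphi$ is an $R$-homomorphism it must respect the relations satisfied by $f$ in $\mathcal{S}(M,R)$. Concretely, writing $c:=\psi_{\tt p}(f)=f_F(\tt p)\in F$ and using that $\psi_{\tt p}$ is an $R$-homomorphism, the function $g:=(f-c)$ — interpreting $c$ via a semialgebraic path or, more carefully, reducing to the sub-$R$-algebra generated by finitely many functions — cannot be inverted in $\mathcal{S}(M,R)$ near $\tt p$; I would exploit that $|f-c|^{1/2}$ or the bounded reciprocal construction in $\mathcal{S}(M,R)$ forces $\varphi(f)=c$ as well. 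The cleanest route is: any $R$-homomorphism $\varphi$ with core $\tt p$ is automatically order-compatible (since squares go to squares and units to units), so $\varphi$ and $\psi_{\tt p}$ induce the same convex prime / ordering data on every finitely generated sub-$R$-algebra, and by the classical Substitution Theorem for polynomial rings together with the fact that $\mathcal{S}(Z,R)$ is the real closure of $\psd(Z)$ for algebraic $Z$, they agree on each such subalgebra, hence everywhere. Existence is immediate: $\psi_{\tt p}$ has core $\tt p$ by construction. The last sentence of (i) then follows from (ii) and (iii): if $\tt p\notin M_F$ we would be in case (ii) or (iii), whose depth hypotheses would contradict $d_{\cl(M)}(\tt p)=\dim(M_{F,\tt p})$.

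\emph{Part (ii).} This is where the appropriate-embedding hypothesis and the weak continuous extension theorem (promised in the excerpt) enter, and I expect this to be the main obstacle. Fix $\varphi$ with core $\tt p\notin M_F$, so $\tt p\in\cl(M)_F\setminus M_F$ lies over a point $q\in\cl(M)\setminus M$ (after transfer). The depth condition $d_{\cl(M)}(\tt p)=\dim(M_{F,\tt p})-1$ together with appropriate embeddedness — which guarantees $M_q$ is semialgebraically connected and $\dim(\cl(M)_q\setminus M_q)=\dim(M_q)-1$ — means $\tt p$ sits on a ``codimension-one slice'' of the boundary that is still large enough to be detected. Given $f\in\mathcal{S}(M,R)$, I would invoke the weak continuous extension theorem to produce a closed semialgebraic $M\subset M_f\subset\cl(M)$ with $\tt p\in M_{f,F}$ to which $f$ extends as $\widehat{f}\in\mathcal{S}(M_f,R)$; then $\psi_{\tt p}(f):=\widehat{f}_F(\tt p)$ is forced. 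The two things to check are (a) \emph{well-definedness}: if $M_f$ and $M_f'$ both work, the two extensions agree at $\tt p$ — this reduces to Part (i) applied on $\cl(M)$ (or on $M_f\cap M_f'$), using that $\tt p\in M_{f,F}\cap M_{f',F}$ and that the ambient set is now locally closed; and (b) \emph{that this rule is a ring homomorphism} — additivity and multiplicativity follow because for $f,g$ one can take $M_{f+g}=M_{fg}:=M_f\cap M_g$ and extension commutes with the ring operations. Uniqueness of $\psi_{\tt p}$ among $R$-homomorphisms with core $\tt p$ again comes from the rigidity in (i): any such $\varphi$ must send $f$ to a zero of every polynomial relation $\widehat{f}$ satisfies near $\tt p$, pinning $\varphi(f)=\widehat{f}_F(\tt p)$.

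\emph{Part (iii).} Here I would construct infinitely many distinct $R$-homomorphisms by exploiting the slack $d_{\cl(M)}(\tt p)\leq\dim(M_{F,\tt p})-2$. The idea: since $\tt p$ is ``deeply interior'' in the boundary relative to $M$, there is at least a two-dimensional worth of semialgebraic directions along which one can approach $\tt p$ within $M_F$, parametrized say by half-branches or by a definable family of curves $\gamma_t$ in $M_F$ limiting to $\tt p$. Each such branch (or each ultrafilter of semialgebraic neighborhoods refining it) yields an $R$-homomorphism $\mathcal{S}(M,R)\to F$ — e.g. via the valuation/residue field at the corresponding point of the real spectrum of $\mathcal{S}(M,R)$ lying over $\tt p$ — and distinct branches in genuinely different directions give homomorphisms that disagree on some $f\in\mathcal{S}(M,R)$ (take $f$ to be a bounded semialgebraic function distinguishing the two directions, which exists precisely because codimension $\geq 2$ leaves room for such an $f$ that is continuous on $M$ but has no continuous extension past $q$ agreeing along both branches). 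Concretely I would pick two semialgebraic arcs $\alpha,\beta:(0,1)_F\to M_F$ with $\alpha(0^+)=\beta(0^+)=\tt p$ and a function like $f=\frac{\ell_1}{\sqrt{\ell_1^2+\ell_2^2}}$ built from two coordinates vanishing at $q$, whose limits along $\alpha$ and $\beta$ differ; scaling the construction over a one-parameter family of arcs produces infinitely many pairwise distinct homomorphisms, all with core $\tt p$.

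The hard part, as indicated, is Part (ii): making the weak continuous extension theorem do exactly the work needed so that the assignment $f\mapsto\widehat{f}_F(\tt p)$ is simultaneously well-defined, multiplicative, and the \emph{only} possibility — this is the delicate interface between the non-locally-closed geometry of $M$ and the rigidity coming from real closedness, and it is precisely why the appropriate-embedding device is introduced.
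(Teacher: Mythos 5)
The high-level plan aligns with the paper — uniqueness in (i) via the real closure of $R[\x]$, the weak continuous extension theorem for (ii), exploiting codimension $\geq 2$ for (iii) — but two steps would fail as written. First, the last sentence of (i) is not a corollary of (ii) and (iii): those are implications with antecedents ${\rm d}_{\cl(M)}({\tt p})=\dim(M_{F,{\tt p}})-1$ and ${\rm d}_{\cl(M)}({\tt p})\leq\dim(M_{F,{\tt p}})-2$, and they assert nothing about whether ${\tt p}\notin M_F$ forces the depth to drop below $\dim(M_{F,{\tt p}})$. That forcing is exactly the content of the claim, so deducing it from ``we would be in case (ii) or (iii)'' is circular. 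The paper proves it independently (Step~3 of its proof of (i)): if ${\tt p}\in\cl(M_F)\setminus M_{\lc,F}$, then a compatible triangulation of $\cl(M)$ places ${\tt p}$ over $\cl(\cl(M\cap U)\setminus(M\cap U))_F$ for a suitable open $U$, whose dimension is strictly less than $\dim(M\cap U)=\dim(M_{F,{\tt p}})$, contradicting ${\rm d}_{\cl(M)}({\tt p})=\dim(M_{F,{\tt p}})$.

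Second, in (iii) the arc-limit construction does not define $R$-homomorphisms on all of ${\mathcal S}(M,R)$: for unbounded $f\in{\mathcal S}(M,R)$ and a semialgebraic arc $\alpha:(0,1]_F\to M_F$ with $\alpha(0^+)={\tt p}\notin M_F$, the germ of $f_F\circ\alpha$ at $0$ may be unbounded and thus have no limit in $F$. (This arc-limit device works for ${\mathcal S}^*(M,R)$ — it is essentially Lemma~\ref{exsx} — but not for ${\mathcal S}(M,R)$.) The paper's construction avoids this by producing, via a compatible triangulation, a one-parameter family of closed subsets $T(q)=\tau(q)\cap M\subset M$, each appropriately embedded with ${\rm d}_{T(q)}({\tt p})=\dim(T(q)_{F,{\tt p}})-1$; it applies part (ii) to each $T(q)$ (which does handle unboundedness through the extension-past-${\tt p}$ mechanism) and composes with the surjective restrictions ${\mathcal S}(M,R)\twoheadrightarrow{\mathcal S}(T(q),R)$. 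Your separating function is then used only to tell these homomorphisms apart, not to build them. Two smaller points: in (i) the general case still needs the direct-limit/pseudo-compactification step and the factorization $g=\tfrac{g/(1+|g|)}{1/(1+|g|)}$ to pass from bounded to unbounded functions, which your sketch does not address; and in (ii) the crucial role of the depth hypothesis is to show ${\tt p}\in(V\setminus\cl(Y))_F$ (a step you omit), after which the paper pins down $\varphi(f)$ not by verifying well-definedness of $f\mapsto\widehat{f}_F({\tt p})$ but by evaluating an arbitrary $\varphi$ with core ${\tt p}$ on two auxiliary functions $h_1=\tfrac{f}{1+|f|}g$, $h_2=\tfrac{g}{1+|f|}$ extended by zero to $\cl(M)$ where part~(i) applies, obtaining existence and uniqueness simultaneously.
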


As $M_{F}=\widehat{M_{F}}$ if $M$ is locally closed, we deduce in this case, by Lemma \ref{st1} (i) and Lemma \ref{adjacent}, that the $R$-homomorphisms ${\mathcal S}(M,R)\to F$ are evaluation homomorphisms. Thus, there exists a bijection between $M_F$ and the set of $R$-homomorphisms ${\mathcal S}(M,R)\to F$. As one can expect, the most interesting case appears when $M$ is not locally closed. 

\begin{mthm}[Substitution Theorem I]\label{st3}
Let $\varphi:{\mathcal S}(M,R)\to F$ be an $R$-homomorphism. Then there exist
\begin{itemize}
\item[(i)] an appropriately embedded semialgebraic set $N\subset R^n$ and a semialgebraic embedding $h:N\hookrightarrow M$ such that $N_0:=h(N)$ is closed in $M$,
\item[(ii)] a point ${\tt p}\in F^n$ adjacent to $N$ such that either ${\tt p}\in N_F$ or ${\tt d}_{\cl(N)}({\tt p})=\dim(N_{F,{\tt p}})-1$,
\end{itemize}
satisfying that the following diagram is commutative
$$
\xymatrix@C=0.5pc@R=0.5pc{
f\ar@{|-{>}}[dd]&{\mathcal S}(M,R)\ar[rrr]^\varphi\ar[dd]_{{\tt j}^*}&&&F\\
&&&\\
f|_{N_0}&{\mathcal S}(N_0,R)\ar[rrr]^{h^{*}}&&&{\mathcal S}(N,R)\ar[uu]_{\psi_{\tt p}}\\
&g\ar@{|-{>}}[rrr]&&&g\circ h\ 
}
$$
where $\psi_{\tt p}:{\mathcal S}(N,R)\to F$ is the unique $R$-homomorphism from ${\mathcal S}(N,R)$ to $F$ whose core is ${\tt p}$.
\end{mthm}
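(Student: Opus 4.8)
The plan is to bring $\varphi$ within reach of the Substitution Lemma \ref{st1} by concentrating it on a well-chosen closed semialgebraic subset of $M$ and then re-embedding that subset appropriately. Write ${\tt q}:={\tt p}_\varphi=(\varphi(\pi_1),\dots,\varphi(\pi_n))\in F^n$ for the core of $\varphi$; by Lemma \ref{adjacent} it is adjacent to $M$, so ${\tt q}\in\widehat{M_F}\subseteq\cl(M)_F$. Since every semialgebraic set is semialgebraically homeomorphic to a bounded one, and precomposing $\varphi$ with the induced isomorphism of function rings only replaces the auxiliary embeddings $h,{\tt j}$ in the conclusion, I may assume $M$, hence $\cl(M)$, bounded; then $d:={\rm d}_{\cl(M)}({\tt q})=\tr\deg_R(R({\tt q}))$ by (\ref{sdtd}.\ref{rmd}).

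If ${\tt q}\in M_F$ the statement follows at once: by Lemma \ref{st1}(i) $\varphi$ is the unique $R$-homomorphism ${\mathcal S}(M,R)\to F$ with core ${\tt q}$; choosing $N_0:=M$, an appropriate embedding of $M$ (Theorem \ref{se0}) presented as a semialgebraic homeomorphism $h:N\to N_0=M$, and $\psi:=\varphi\circ(h^*)^{-1}:{\mathcal S}(N,R)\to F$, the core ${\tt p}$ of $\psi$ lies in $N_F$, so Lemma \ref{st1}(i) again gives $\psi=\psi_{\tt p}$ and the diagram commutes by construction. Thus the substantial case is ${\tt q}\notin M_F$, i.e.\ $M$ is not locally closed near ${\tt q}$; note that then $(N_0)_F\subseteq M_F$ forces ${\tt q}\notin(N_0)_F$ for every $N_0\subseteq M$.

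The heart of the argument is to produce a closed semialgebraic $N_0\subseteq M$ with $I_M(N_0):=\{f\in{\mathcal S}(M,R):f|_{N_0}=0\}\subseteq\ker\varphi$ and $\dim N_0=d+1$. The inclusion of ideals makes $\varphi$ factor through the restriction ${\tt j}^*:{\mathcal S}(M,R)\twoheadrightarrow{\mathcal S}(N_0,R)$ (surjective because $N_0$ is closed in $M$) as $\varphi=\bar\varphi\circ{\tt j}^*$, with $\bar\varphi$ again of core ${\tt q}$; and $\dim N_0\geq d+1$ is unavoidable, since a $d$-dimensional $N_0$ with ${\tt q}\in\cl(N_0)_F\setminus(N_0)_F$ would place ${\tt q}$ in the extension of the closed set $\overline{\cl(N_0)\setminus N_0}$ of dimension $\leq d-1$, against ${\rm d}_{\cl(N_0)}({\tt q})=\tr\deg_R(R({\tt q}))=d$. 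To build such an $N_0$ I would analyse the prime ideal $\gtp:=\ker\varphi$ and the closed set $\bigcap_{f\in\gtp}Z_M(f)$, using that $\varphi$ is order preserving (each non-negative element of ${\mathcal S}(M,R)$ is a square), a semialgebraic inequality of \L ojasiewicz type to decide membership in $\sqrt{\gtp}=\gtp$, and the weak continuous extension theorem to handle functions near $\cl(M)\setminus M$; a final intersection with a closed set still containing ${\tt q}$ in its extension trims $\dim N_0$ to exactly $d+1$.

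With $N_0$ at hand I would invoke Theorem \ref{se0} to get an appropriately embedded $N\subseteq R^n$ and a semialgebraic homeomorphism $h:N\to N_0$; then $h^*:{\mathcal S}(N_0,R)\to{\mathcal S}(N,R)$ is an isomorphism, $N_0=h(N)$ is closed in $M$, and $\psi:=\bar\varphi\circ(h^*)^{-1}$ is an $R$-homomorphism with some core ${\tt p}$. Because the coordinates of $h$ and $h^{-1}$ are semialgebraic over $R$, those of ${\tt p}$ are algebraic over $R({\tt q})$ and conversely, so $R({\tt p})=R({\tt q})$ and ${\rm d}_{\cl(N)}({\tt p})=\tr\deg_R(R({\tt p}))=d$; moreover $\dim N=\dim N_0=d+1$ and ${\tt p}\in\widehat{N_F}$ by Lemma \ref{adjacent}. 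From ${\tt q}\notin M_F$ it follows that ${\tt p}\notin N_F$ (else Lemma \ref{st1}(i) would give $\psi=\psi_{\tt p}$ and ${\tt q}=h_F({\tt p})\in(N_0)_F\subseteq M_F$); then, $N$ being appropriately embedded, the germ $\cl(N)_{F,{\tt p}}\setminus N_{F,{\tt p}}$ has dimension $\dim N_{F,{\tt p}}-1$, so cutting $\overline{\cl(N)\setminus N}$ with a small closed ball centred at ${\tt p}$ exhibits a closed subset of $\cl(N)$ of that dimension through ${\tt p}$, whence $d={\rm d}_{\cl(N)}({\tt p})\leq\dim N_{F,{\tt p}}-1\leq\dim N-1=d$. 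Therefore ${\rm d}_{\cl(N)}({\tt p})=\dim N_{F,{\tt p}}-1$, and Lemma \ref{st1}(ii) makes $\psi_{\tt p}$ the unique $R$-homomorphism ${\mathcal S}(N,R)\to F$ with core ${\tt p}$, so $\psi=\psi_{\tt p}$ and $\varphi=\psi_{\tt p}\circ h^*\circ{\tt j}^*$, which is the asserted commutative diagram. The step I expect to be the real fight is the construction of $N_0$: the naive zero set $\bigcap_{f\in\gtp}Z_M(f)$ need neither carry $\varphi$ nor have the right dimension, and it is exactly here that the non-locally-closed geometry of $M$ forces one to lean on the weak continuous extension theorem and a semialgebraic Nullstellensatz, and to pin $\dim N_0$ down to $d+1$ so as never to fall into case (iii) of the Substitution Lemma.
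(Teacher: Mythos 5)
Your outline matches the paper's architecture at a structural level: concentrate $\varphi$ on a fitting closed $N_0\subseteq M$ with $I_M(N_0)\subseteq\ker\varphi$ so that $\varphi$ factors through the (surjective) restriction ${\tt j}^*$, re\hbox{-}embed $N_0$ appropriately via Theorem~\ref{se0}, and invoke the Substitution Lemma for the re-embedded set. The reduction to bounded $M$, the easy case ${\tt q}\in M_F$, and the deduction ${\tt p}\notin N_F$ from ${\tt q}\notin M_F$ are all fine. But there is a genuine gap in the two steps that carry the real content.

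First, your dimension target for $N_0$ is wrong. You work with $d:={\rm d}_{\cl(M)}({\tt q})=\tr\deg_R(R({\tt q}))$ and aim for $\dim N_0=d+1$; the paper pins $\dim N_0=t+1$ where $t:=\tr\deg_R\bigl(\qf({\mathcal S}(M,R)/\ker\varphi)\bigr)$. One always has $d\leq t$ because ${\mathcal S}(\cl(M),R)/(\ker\varphi\cap{\mathcal S}(\cl(M),R))\hookrightarrow{\mathcal S}(M,R)/\ker\varphi$, and this can be strict: $\cl(M)$ is generally \emph{not} a brimming pseudo-compactification of $M$ for $\ker\varphi$, which is exactly why Lemma and Definition~\ref{bri} exists. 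Your subsequent computation ``$R({\tt p})=R({\tt q})$, hence ${\rm d}_{\cl(N)}({\tt p})=d$'' is unsupported for the same reason: $\bar\varphi$ is not an evaluation map, so values $\bar\varphi(g)$ for $g\in{\mathcal S}(N_0,R)$ need not be algebraic over $R({\tt q})$ (the polynomial constraint from the Zariski closure of the graph of $g$ can degenerate at ${\tt q}$). What is actually true, via the brimming pseudo-compactification and (\ref{sdtd}.\ref{rmd}), is ${\rm d}_{\cl(N)}({\tt p})=t$, and $\dim N=t+1$ — which is where the inequality chain you want then closes. If $d<t$, a $d{+}1$-dimensional $N_0$ cannot carry the factored homomorphism, and your argument has no way to notice.

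Second, the construction of $N_0$ is precisely where you stop, and the sketch points at the wrong tools. The naive zero set $\bigcap_{f\in\gtp}Z_M(f)$, \L ojasiewicz estimates, and the weak continuous extension theorem do not appear in the proof of Theorem~\ref{st3}; the weak continuous extension theorem is consumed inside the proof of Lemma~\ref{st1}(ii), which you are merely quoting. The paper instead fixes a brimming pseudo-compactification $(X,{\tt j})$ of $M$ for $\gtp_0:=\ker\varphi$, passes to the real spectrum of $\psd(\overline{X}^{\zar})$, and uses Ruiz's dimension theorem together with density of $\widetilde{M}_{\lc}$ to build a chain of prime cones $\beta_d\subsetneq\cdots\subsetneq\beta_0$ in $\widetilde{M}_{\lc}$; these are then pushed to a chain of prime $z$-ideals $\gtp_d\subsetneq\cdots\subsetneq\gtp_1\subsetneq\gtp_0$ in ${\mathcal S}(M,R)$ with ${\rm d}_M(\gtp_i)=t+i$, and $N_0:=Z(f)$ for $f\in\gtp_1$ of minimal-dimensional zero set. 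Verifying that these $\gtp_i$ are contained in $\gtp_0$, are prime $z$-ideals, and have the right depth is the bulk of the work and relies on $M_{\lc}$, the curve selection lemma, and Theorem~\ref{boundedcase}. Without this machinery there is no reason a closed $N_0$ of the right dimension with $I_M(N_0)\subseteq\ker\varphi$ exists, so as written the argument does not go through.
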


Of course, the decomposition provided in Theorem \ref{st3} is not unique but this results shows that every $R$-homomorphism $\varphi:{\mathcal S}(M,R)\to F$ is essentially a restriction homomorphism composed with a point-evaluation homomorphism (via an intermediate $R$-isomorphism induced by a semialgebraic homeomorphism). Namely, $\varphi$ is determined by two objects (see the proof of Theorem \ref{st3} for further details):
\begin{itemize}
\item[\sqbullet] A fitting closed semialgebraic subset $N_0\subset M$ such that 
$$
I_M(N_0):=\{f\in{\mathcal S}(M,R):\,N_0\subset\{f=0\}\}\subset\ker\varphi.
$$
\item[\sqbullet] A finite family $\{g_1,\ldots,g_n\}\subset{\mathcal S}(N,R)$ such that ${\tt p}:=(\varphi(g_1),\ldots,\varphi(g_n))\in F^n$ is adjacent to $N$ and either ${\tt p}\in N_F$ or ${\tt d}_{\cl(N)}({\tt p})=\dim(N_{F,{\tt p}})-1$.
\end{itemize} 
This confirms the finitary nature of the $R$-homomorphisms $\varphi:{\mathcal S}(M,R)\to F$. 

\vspace{2mm}
\noindent{\em Ring of bounded semialgebraic functions}. The situation for the ring of bounded semialgebraic functions is quite different. Recall that a \em semialgebraic pseudo-compactification of $M$ \em is a pair $(X,{\tt j})$ that consists of a closed and bounded semialgebraic set $X\subset R^n$ and a semialgebraic embedding ${\tt j}:M\hookrightarrow X$ whose image is dense in $X$. A crucial fact is that ${\mathcal S}^*(M,R)$ is the direct limit of the family constituted by the rings of semialgebraic functions ${\mathcal S}(X,R)$ where $(X,{\tt j})$ runs on the semialgebraic pseudo-compactifications of $M$ (see \ref{cita}). We will use this together with Lemma \ref{st1}(i) applied to ${\mathcal S}(X,R)$ in order to understand the $R$-homomorphisms ${\mathcal S}^*(M,R)\to F$. A substantial difference using $R$-homomorphisms $\varphi:{\mathcal S}(M,R)\to F$ is that if $M$ is bounded, the core of an $R$-homomorphism ${\mathcal S}^*(M,R)\to F$ belongs to $\cl(M)_F$ with no further restrictions (compare Lemma \ref{adjacent} with Remark \ref{adjacentbounded}). In fact, if $(X,{\tt j})$ is a semialgebraic pseudo-compactification of $M$ and $\varphi:{\mathcal S}(X,R)\to F$ is an $R$-homomorphism, then it can be extended to the direct limit ${\mathcal S}^*(M,R)$ (of course this extension is rarely unique, see Lemma \ref{exsx} and Remark \ref{exsx2}). A crucial fact proved in \cite[Proof of Thm. 3. Step 2]{fg2} states: 
\begin{mlemdef}\label{bri}
Let $\gtp$ be a prime ideal of ${\mathcal S}^{\diam}(M,R)$. Then there exists a semialgebraic pseudo-compactification $(X,{\tt j})$ of $M$ such that 
$$
\qf({\mathcal S}(X,R)/(\gtp\cap{\mathcal S}(X,R)))=\qf({\mathcal S}^{\diam}(M,R)/\gtp).
$$
\em We say that $(X,{\tt j})$ is a \em brimming semialgebraic pseudo-compactification of $M$ for $\gtp$\em. 
\end{mlemdef}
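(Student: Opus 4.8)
\textbf{Proof proposal for Lemma and Definition \ref{bri}.}

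The plan is to reduce the statement to the case of a closed and bounded semialgebraic set and then to exhibit explicitly the pseudo-compactification using a finite set of generators of the relevant residue field. First I would observe that it suffices to treat the ring ${\mathcal S}^*(M,R)$ of bounded functions: indeed, since ${\mathcal S}^*(M,R)\subset{\mathcal S}(M,R)$ and the quotient field of ${\mathcal S}(M,R)/\gtp$ coincides with the quotient field of ${\mathcal S}^*(M,R)/(\gtp\cap{\mathcal S}^*(M,R))$ (every semialgebraic function is, up to a bounded semialgebraic unit like $1/(1+f^2)^{1/2}$, a bounded one, so the fraction fields agree after localising), we may replace $\gtp$ by its contraction to ${\mathcal S}^*(M,R)$. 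Write $A:={\mathcal S}^*(M,R)$, $L:=\qf(A/\gtp)$, and recall from \ref{cita} that $A=\varinjlim_{(X,{\tt j})}{\mathcal S}(X,R)$, the direct limit being taken over the directed system of semialgebraic pseudo-compactifications of $M$. Consequently $A/\gtp=\varinjlim {\mathcal S}(X,R)/(\gtp\cap{\mathcal S}(X,R))$ and hence $L=\varinjlim \qf({\mathcal S}(X,R)/(\gtp\cap{\mathcal S}(X,R)))$, an increasing union of subfields of $L$.

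The key step is then to show that this increasing union stabilises, that is, that $L$ is \emph{already} the quotient field of one of the terms. For this I would use the fact that $L$ is a finitely generated field extension of $R$: since $X$ is closed and bounded, ${\mathcal S}(X,R)$ is a finitely generated $R$-algebra up to real closure (more precisely, $\qf({\mathcal S}(X,R)/\gtq)$ is a finitely generated field extension of $R$ for every prime $\gtq$, being contained in a finite extension of $R(\x_1,\ldots,\x_n)$), and $\dim(X)\le n$ bounds the transcendence degree uniformly by $n$. Therefore every subfield in the directed union has transcendence degree at most $n$ over $R$, and a finitely generated field extension of $R$ of bounded transcendence degree that is the union of a directed family of subfields must equal one of them: choose a transcendence basis and finitely many further generators of $L$ over $R$, each of these finitely many elements lies in some term of the system, and by directedness they all lie in a common term $(X_0,{\tt j}_0)$; that term then has $\qf({\mathcal S}(X_0,R)/(\gtp\cap{\mathcal S}(X_0,R)))\supseteq$ all the needed generators, hence equals $L$. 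Taking $(X,{\tt j}):=(X_0,{\tt j}_0)$ finishes the argument, and one sets $\gtp\cap{\mathcal S}(X,R)$ to be the indicated prime.

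The main obstacle I anticipate is the precise bookkeeping needed to justify that each individual element of $L$ — represented as a fraction $\overline{f}/\overline{g}$ with $f,g\in A$ and $g\notin\gtp$ — is captured by a single pseudo-compactification, and that finitely many such elements are captured by a common one. This is exactly where one invokes the directed-system structure of \ref{cita}: given $f_1,\ldots,f_k,g_1,\ldots,g_k\in A={\mathcal S}^*(M,R)$, each $f_i$ (resp. $g_i$) extends to some ${\mathcal S}(X_i,R)$ (resp. ${\mathcal S}(X_i',R)$), and one takes a pseudo-compactification $X$ dominating all the $X_i,X_i'$ (the system is directed), so that all the $f_i,g_i$ simultaneously come from ${\mathcal S}(X,R)$; their classes then generate $L$ over $R$ inside $\qf({\mathcal S}(X,R)/(\gtp\cap{\mathcal S}(X,R)))$, forcing the equality of fields. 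Care is also required to check that $\gtp\cap{\mathcal S}(X,R)$ is prime and that the natural inclusion of quotient rings is the identity on residue fields; both are routine once the set-up is in place. I would also remark that the argument works verbatim for the $\diam$ notation, i.e. for both ${\mathcal S}(M,R)$ and ${\mathcal S}^*(M,R)$, after the initial reduction identifying the two quotient fields.
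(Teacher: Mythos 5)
The paper does not give a proof of this statement; it is quoted from \cite[Proof of Thm.~3, Step~2]{fg2}, so I will assess your argument on its own terms. Your overall strategy — reduce to $A:={\mathcal S}^*(M,R)$ via (\ref{cita}.\ref{inequality}), write $A=\lim_{\longrightarrow}{\mathcal S}(X,R)$, deduce $L:=\qf(A/\gtp)=\lim_{\longrightarrow}\qf\bigl({\mathcal S}(X,R)/(\gtp\cap{\mathcal S}(X,R))\bigr)$ as a directed union of subfields with uniformly bounded transcendence degree, and then show this union stabilises — is the right one, and the reduction and the direct-limit manipulations (including injectivity of the transition maps and primality of $\gtp\cap{\mathcal S}(X,R)$) are fine.

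There is, however, a genuine gap in the stabilisation step. You assert that $L$ is a finitely generated field extension of $R$, more specifically that each $\qf({\mathcal S}(X,R)/\gtq)$ is contained in a finite extension of $R(\x_1,\ldots,\x_n)$. This is false: since ${\mathcal S}(X,R)$ is a real closed ring, $\qf({\mathcal S}(X,R)/\gtq)$ is a \emph{real closed field}, namely the real closure of $\qf\bigl(R[\x]/(\gtq\cap R[\x])\bigr)$ at the induced ordering, which is an infinite algebraic extension whenever the transcendence degree is positive. Consequently $L$ is itself real closed, and a real closed field of positive transcendence degree over $R$ is never finitely generated over $R$, so the step ``choose a transcendence basis and finitely many further generators of $L$ over $R$'' is not available.

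The fix is the observation your write-up omits, and it makes the argument cleaner, not harder: every term $K_X:=\qf\bigl({\mathcal S}(X,R)/(\gtp\cap{\mathcal S}(X,R))\bigr)$ and $L$ itself are real closed. By (\ref{sdtd}.\ref{rmd}), $\tr\deg_R K_X={\rm d}_X(\gtp\cap{\mathcal S}(X,R))\leq\dim X=\dim M\leq n$ for every $(X,{\tt j})\in{\mathfrak F}_M$, hence $\tr\deg_R L\leq n$ is finite. Pick a transcendence basis $t_1,\ldots,t_d$ of $L$ over $R$; by directedness of $({\mathfrak F}_M,\preccurlyeq)$ there is a single $(X_0,{\tt j}_0)$ with $t_1,\ldots,t_d\in K_{X_0}$, so $\tr\deg_R K_{X_0}=d=\tr\deg_R L$ and $L/K_{X_0}$ is algebraic. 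A real closed field has no proper formally real algebraic extension, so $L=K_{X_0}$, and $(X_0,{\tt j}_0)$ is the desired brimming pseudo-compactification. No further generators of $L$ beyond the transcendence basis are needed, and none can be supplied by a ``finitely generated'' argument.
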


Our main result for the bounded case, which also works for the ring ${\mathcal S}(M,R)$, is the following.

\begin{mthm}[Substitution Theorem II]\label{boundedcase}
Let $\varphi:{\mathcal S}^{\diam}(M,R)\to F$ be an $R$-homomorphism and $(X,{\tt j})$ a brimming semialgebraic pseudo-compactification of $M$ for $\gtp:=\ker\varphi$. Let ${\tt p}$ be the core of $\psi:=\varphi\circ{\tt j}^*$ and denote $A:={\mathcal S}(X,R)/(\gtp\cap{\mathcal S}(X,R))$. Then $\psi$ induces the homomorphism 
$$
\widehat{\psi}:\qf(A)=\qf({\mathcal S}^{\diam}(M,R)/\gtp)\hookrightarrow F,\ \tfrac{[a]}{[b]}\mapsto\tfrac{\psi(a)}{\psi(b)}=\tfrac{a_{1,F}({\tt p})}{a_{2,F}({\tt p})}
$$ 
and the following diagram is commutative
$$
\xymatrix{
\qf(A)\ar@{=}[r]&\qf({\mathcal S}^{\diam}(M,R)/\gtp)\ar@{^(->}[r]^(.7){\widehat{\psi}}&F\\
A\ar@{^(->}[u]\ar@{^(->}[r]^(.3){\ol{\tt j}^*}&{\mathcal S}^{\diam}(M,R)/\gtp\ar@{^(->}[u]\ar@{^(->}[ru]^{\ol{\varphi}}\\
{\mathcal S}(X,R)\ar@{^(->}[r]_{{\tt j}^*}\ar[u]\ar@/_{9mm}/[rruu]^(.375)\psi&{\mathcal S}^{\diam}(M,R)\ar[u]!D|\hole\ar@/_{5mm}/[uur]_{\varphi}}
$$
In particular, $\varphi$ is completely determined by ${\tt p}$: if $f\in{\mathcal S}^{\diam}(M,R)$, there exist $a_1,a_2\in{\mathcal S}(X,R)$ such that $a_{2,F}({\tt p})\neq0$, $a_2f-a_1\in\gtp$ and $\varphi(f)=\frac{a_{1,F}({\tt p})}{a_{2,F}({\tt p})}$.
\end{mthm}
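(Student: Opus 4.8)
The plan is to exploit the brimming property in Lemma and Definition \ref{bri} to reduce everything to the already-understood situation of a closed and bounded semialgebraic set $X$, to which we can apply Lemma \ref{st1}(i). First I would record the purely ring-theoretic skeleton: since $\gtp:=\ker\varphi$ is a prime ideal of ${\mathcal S}^{\diam}(M,R)$, $\varphi$ factors through the integral domain ${\mathcal S}^{\diam}(M,R)/\gtp$ and hence, because $F$ is a field, through its fraction field, producing an $R$-embedding $\ol{\varphi}:{\mathcal S}^{\diam}(M,R)/\gtp\hookrightarrow F$ and its extension $\widehat{\psi}:\qf({\mathcal S}^{\diam}(M,R)/\gtp)\hookrightarrow F$. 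On the other side, the inclusion ${\tt j}^*:{\mathcal S}(X,R)\hookrightarrow{\mathcal S}^{\diam}(M,R)$ (injective because ${\tt j}(M)$ is dense in $X$, so no nonzero semialgebraic function on $X$ restricts to zero) carries $\gtp\cap{\mathcal S}(X,R)$ into $\gtp$, giving $\ol{\tt j}^*:A:={\mathcal S}(X,R)/(\gtp\cap{\mathcal S}(X,R))\hookrightarrow{\mathcal S}^{\diam}(M,R)/\gtp$, and the brimming hypothesis is exactly the statement that this inclusion becomes an isomorphism after passing to fraction fields: $\qf(A)=\qf({\mathcal S}^{\diam}(M,R)/\gtp)$. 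Assembling these maps gives the commutative diagram, so the only genuine content left is to identify the value $\psi(a)=\varphi({\tt j}^*(a))$ for $a\in{\mathcal S}(X,R)$ with the point-evaluation $a_F({\tt p})$.

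Next I would pin down the core and invoke the locally closed case. Set $\psi:=\varphi\circ{\tt j}^*:{\mathcal S}(X,R)\to F$; its core is ${\tt p}:=(\psi(\pi_1|_X),\dots,\psi(\pi_n|_X))\in F^n$. Since $X$ is closed and bounded in $R^n$ it is in particular locally closed, so $X_F=\widehat{X_F}$, and by Lemma \ref{adjacent} the core of any $R$-homomorphism ${\mathcal S}(X,R)\to F$ lies in $\widehat{X_F}=X_F$; thus ${\tt p}\in X_F$. Now Lemma \ref{st1}(i) applies verbatim to the locally closed set $X$: the evaluation homomorphism $\psi_{\tt p}={\rm ev}_{X_F,{\tt p}}\circ{\tt i}_{X,F}$ is the \emph{unique} $R$-homomorphism ${\mathcal S}(X,R)\to F$ with core ${\tt p}$, and since $\psi$ has core ${\tt p}$ by construction, we conclude $\psi=\psi_{\tt p}$. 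In other words $\psi(a)=a_F({\tt p})$ for every $a\in{\mathcal S}(X,R)$, which is precisely what was needed.

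Finally I would derive the explicit formula and the "in particular" clause. Given $f\in{\mathcal S}^{\diam}(M,R)$, its class $[f]\in{\mathcal S}^{\diam}(M,R)/\gtp$ lies in $\qf({\mathcal S}^{\diam}(M,R)/\gtp)=\qf(A)$, so there are $a_1,a_2\in{\mathcal S}(X,R)$ with $a_2\notin\gtp\cap{\mathcal S}(X,R)$ (equivalently $[a_2]\neq0$ in $A$, hence $\ol{\tt j}^*([a_2])\neq0$, i.e. $a_2\notin\gtp$ in ${\mathcal S}^{\diam}(M,R)$) such that $[f]=\tfrac{[a_1]}{[a_2]}$ in $\qf(A)$; clearing denominators this says $a_2f-a_1\in\gtp=\ker\varphi$. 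Applying $\varphi$ and using $\varphi({\tt j}^*(a_i))=\psi(a_i)=a_{i,F}({\tt p})$ gives $\varphi(a_2)\varphi(f)=\varphi(a_1)$, i.e. $a_{2,F}({\tt p})\varphi(f)=a_{1,F}({\tt p})$; and $a_{2,F}({\tt p})=\psi(a_2)=\ol\varphi([a_2])\neq0$ since $\ol\varphi$ is injective and $[a_2]\neq0$ in ${\mathcal S}^{\diam}(M,R)/\gtp$. Hence $\varphi(f)=\tfrac{a_{1,F}({\tt p})}{a_{2,F}({\tt p})}$, so $\varphi$ is completely determined by ${\tt p}$ (together with the choice of $X$ and the identification of fraction fields). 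The only step that is not a formal diagram chase is the appeal to Lemma \ref{st1}(i) for $X$, and the only subtlety there — injectivity of ${\tt j}^*$ and membership ${\tt p}\in X_F$ — is handled by density of ${\tt j}(M)$ in $X$ and local closedness of $X$ respectively; I would therefore expect no real obstacle beyond carefully matching the arrows in the stated diagram to the maps constructed here.
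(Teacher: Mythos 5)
Your proof is correct and follows essentially the same route as the paper: apply Lemma \ref{st1}(i) to the closed and bounded (hence locally closed) set $X$ to get $\psi(a)=a_F({\tt p})$, use the brimming hypothesis to identify $\qf(A)$ with $\qf({\mathcal S}^{\diam}(M,R)/\gtp)$, and write any $f$ as $[a_1]/[a_2]$ with $a_i\in{\mathcal S}(X,R)$, $a_2\notin\gtp$, to get $\varphi(f)=a_{1,F}({\tt p})/a_{2,F}({\tt p})$. Your exposition simply makes explicit a few diagram-chasing steps (injectivity of ${\tt j}^*$ and $\ol{\tt j}^*$, the use of the universal property of fraction fields) that the paper leaves implicit.
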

\begin{proof}
By Lemma \ref{st1}, it holds $\psi={\rm ev}_{X_F,\tt p}\circ{\tt i}_{X,F}$. Note that $\widehat{\psi}:\qf(A)\to F$ is the unique homomorphism between the real closed fields $\qf(A)$ and $F$. On the other hand, $\ol{\varphi}:{\mathcal S}^{\diam}(M,R)\hookrightarrow F$ induces the homomorphism 
$$
\widehat{\ol{\varphi}}:\qf({\mathcal S}^{\diam}(M,R)/\gtp)\hookrightarrow F, \tfrac{[f]}{[g]}\mapsto\tfrac{\varphi(f)}{\varphi(g)}.
$$ 
Since $\qf(A)=\qf({\mathcal S}^{\diam}(M,R)/\gtp)$, we have $\widehat{\ol{\varphi}}=\widehat{\psi}$ and for each $f\in{\mathcal S}^{\diam}(M,R)$ there exist $a_1,a_2\in{\mathcal S}(X,R)$ such that $a_2\not\in\gtp\cap{\mathcal S}(X,R)$ and $a_2f-a_1\in\gtp$; in particular, $\varphi(a_i)=\psi(a_i)=a_{i,F}({\tt p})$ and $a_{2,F}({\tt p})=\varphi(a_2)\neq0$. Thus, 
$$
a_{2,F}({\tt p})\varphi(f)-a_{1,F}({\tt p})=\varphi(b)\varphi(f)-\varphi(a)=\varphi(bf-a)=0,
$$
that is, $\varphi(f)=\tfrac{a_{1,F}({\tt p})}{a_{2,F}({\tt p})}$, as required.
\end{proof}

\begin{mremark}
If $\varphi:{\mathcal S}^{\diam}(M)\to A$ is an $R$-homomorphism into an $R$-algebra $A$, the first isomorphism theorem provide the following canonical decomposition of $\varphi$:
$$
{\mathcal S}^{\diam}(M)\overset{\pi}{\longrightarrow}{\mathcal S}^{\diam}(M)/\ker\varphi\overset{\ol{\varphi}}{\cong}\im\varphi\hookrightarrow A.
$$
In particular, if $A$ is an integral domain, the ideal $\gtp:=\ker\varphi$ is prime and so $F:=\qf({\mathcal S}^{\diam}(M)/\gtp)$ is a real closed field that contains $R$ as a subfield. Now, the results stated above can be applied to the $R$-homomorphism $\psi:{\mathcal S}^{\diam}(M)\overset{\pi}{\longrightarrow}{\mathcal S}^{\diam}(M)/\gtp\hookrightarrow F$ induced by $\varphi$.
\end{mremark}

Theorem \ref{st3} is proved using Lemma \ref{st1}, Theorem \ref{boundedcase} and the theory of real closed rings \cite{s2}. The proof of Lemma \ref{st1} is based on a fruitful use of the real closure's universal properties of the ring of polynomials with coefficients in $R$ and the crucial fact that semialgebraic functions enjoy the following weak continuous extension property, which has its own interest.

\begin{mthm}[Weak continuous extension property]\label{ext2}
Suppose that the germ $M_q$ is semialgebraically connected for all $q\in\cl(M)$. Then for each $f\in{\mathcal S}(M,R)$ there exist an open semialgebraic neighborhood $V$ of $M$ in $\cl(M)$ and a semialgebraic set $Y\subset\cl(M)\setminus M$ such that $\dim(Y_q)\leq\dim(M_q)-2$ for all $q\in Y$ and $f$ can be extended continuously to $V\setminus Y$.
\end{mthm}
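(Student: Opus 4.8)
The plan is to reduce the statement to a purely local problem at each point $q\in\cl(M)\setminus M$ and then glue. First I would fix $f\in\Cont(M,R)=\mathcal{S}(M,R)$ and consider its graph $\Gamma_f\subset M\times R\subset R^{n+1}$, a bounded-in-the-last-coordinate semialgebraic set. The idea is that $f$ extends continuously across $q$ precisely when the fiber of $\cl(\Gamma_f)$ over $q$ is a single point; otherwise the fiber is a closed bounded semialgebraic subset of $R$, hence a finite union of points and closed intervals, and the ``bad locus'' is governed by the dimension of this fiber. So I would set
$$
Y:=\{q\in\cl(M)\setminus M:\ \text{the fiber }(\cl\Gamma_f)_q\ \text{is not a single point}\},
$$
which is semialgebraic, and let $V$ be the (open semialgebraic) set of points of $\cl(M)$ near which $f$ extends continuously, i.e.\ the complement in $\cl(M)$ of the closure of $Y$ relative to the locus where the extension genuinely fails; a little care is needed here to make $V$ open and to ensure $V\setminus Y\supset M$, which follows because $f$ is already continuous on the open-in-$\cl(M)$ set... no, $M$ need not be open in $\cl(M)$, so instead I would take $V$ to be an open semialgebraic neighborhood of $M$ in $\cl(M)$ obtained from a semialgebraic tubular-type argument on the locally closed set $M\cup(\text{good points})$, and check $V\cap Y=\emptyset$ is \emph{not} what we want — rather $V\setminus Y$ is where the coherent extension lives. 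The honest formulation: define the extension $\widehat f$ on $\cl(M)\setminus Y$ by $\widehat f(q):=$ the unique point of $(\cl\Gamma_f)_q$, and then take $V$ to be any open semialgebraic neighborhood of $M$ in $\cl(M)$ on which $\widehat f$ (restricted to $V\setminus Y$) is continuous; such $V$ exists by a standard semialgebraic argument (curve selection plus the finiteness of the fiber structure), and $M\subset V$ because at points of $M$ the fiber is the single point $f(q)$ and continuity of $f$ is given.

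The crux is the dimension estimate $\dim(Y_q)\le\dim(M_q)-2$ for $q\in Y$. Here is where the hypothesis that $M_q$ is semialgebraically connected for every $q\in\cl(M)$ enters decisively. Fix $q\in Y$ and write $d=\dim(M_q)$. The point $q$ lies in $\cl(M)\setminus M$, and the fiber $(\cl\Gamma_f)_q$ being a nondegenerate closed interval $[a,b]$ (or at least containing two distinct values) forces, by curve selection, two semialgebraic arcs in $M$ ending at $q$ along which $f$ tends to $a$ and to $b$ respectively. If the set of such ``ambiguous'' points $q$ had dimension $\ge d-1$ in a neighborhood, I would produce a semialgebraic arc $\gamma$ inside $Y$ through $q$ of dimension $1$, lift it to two distinct continuous sections over $\gamma$ in $\cl\Gamma_f$ with values at the endpoints realizing the jump, and then use semialgebraic connectedness of the germs $M_{q'}$ for $q'$ along $\gamma$ together with a connectedness-in-families argument (the germ $M_{\gamma}$ along the arc is, up to refining $\gamma$, semialgebraically connected and of dimension $\le d$) to derive a contradiction with $\dim M_q=d$: roughly, the existence of a codimension-$1$ locus of genuine jumps of a continuous function on the connected germ $M_q$ would force $M_q$ to disconnect, or would force $\dim M_q\ge \dim Y_q+2$. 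I would organize this by stratifying $\cl(M)$ compatibly with $M$, $Y$ and $\Gamma_f$, and arguing stratum by stratum: on a stratum $S\subset Y$ of dimension $\ge d-1$, the function $f$ restricted to the part of $M$ adjacent to $S$ would have to ``split'' $M_q$ into two pieces on either side of the jump, contradicting connectedness.

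The gluing step is then routine: once $\widehat f$ is well-defined and continuous at each point of $\cl(M)\setminus Y$ locally, continuity on $V\setminus Y$ follows because continuity is a local property and the extension is unique where defined (so the local extensions automatically agree on overlaps). Semialgebraicity of $\widehat f$ on $V\setminus Y$ is immediate since its graph is $\cl(\Gamma_f)\cap((V\setminus Y)\times R)$, a semialgebraic set, and it is the graph of a function there by construction. Finally $V$ can be taken open semialgebraic in $\cl(M)$ and containing $M$ as noted, and $\dim Y_q\le \dim M_q-2$ for all $q\in Y$ is exactly the estimate proved above. The main obstacle I anticipate is making the codimension-$2$ bound fully rigorous: the naive expectation is codimension $1$ (a jump locus of a function is typically a hypersurface), and squeezing out the extra codimension is precisely what the semialgebraic connectedness of \emph{all} the germs $M_q$ buys us — I expect the proof to hinge on a careful local analysis showing that a codimension-$1$ jump locus would disconnect some germ $M_q$, and that this analysis will require a somewhat delicate induction on $\dim M$ together with a good choice of stratification refining $M$, $\cl(M)\setminus M$, and $\Gamma_f$.
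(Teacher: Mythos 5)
The basic strategy is the right one, and it matches the paper's: consider the closure $X:=\cl(\Gamma_g)$ of the graph, use the fibers of the projection $\varrho:X\to\cl(M)$ to define the extension, and place the points with bad fibers into $Y$. But several concrete pieces are missing. You must first replace $f$ by a bounded function (the paper uses $g:=f/(1+|f|)$); otherwise the fibers of $\varrho$ over $\cl(M)\setminus M$ need not be bounded closed subsets of $R$. The exceptional set $Y$ you propose (the non-singleton-fiber locus) is also not the whole story: to prove continuity of the extension off $Y$ the paper needs a second piece $Y_1$, built from a semialgebraic triangulation of $X$, collecting the projections of those open simplices in $X\setminus\Gamma_g$ whose local dimension is $\le\dim\Gamma_g-2$. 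It is precisely off $Y_0\cup Y_1$ that the projection $\varrho$ restricts to a semialgebraic homeomorphism over a compact neighborhood of the fiber, and this is what actually delivers continuity. Your plan to recover this by shrinking $V$ ``by a standard semialgebraic argument'' is not justified, and continuity of the extension at points of $M$ approached from $\cl(M)\setminus M$ is also a nontrivial point (the paper proves it by a two-fold use of curve selection) rather than a consequence of continuity of $f$ on $M$.

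More importantly, the codimension-$2$ estimate, which you yourself flag as the main obstacle, is handled in the paper by a different mechanism from the one you sketch. For the ``fat fiber'' locus $Y_0:=\{q\in\cl(M):\dim\varrho^{-1}(q)=1\}$ it is a pure dimension count with no connectedness input: $\varrho^{-1}(Y_0)$ has dimension $\dim Y_0+1$ but lies in $X\setminus\Gamma_g=\cl(\Gamma_g)\setminus\Gamma_g$, which by \cite[2.8.13]{bcr} has dimension strictly less than $\dim\Gamma_g=\dim M$; the same comparison applied in a small neighborhood gives $\dim(Y_{0,p})\le\dim(M_p)-2$ for all $p$. Connectedness of the germs $M_q$ is used for a \emph{separate} step, namely that for $q\notin Y_0$ the finite fiber $\varrho^{-1}(q)=\{z_1,\dots,z_s\}$ is a singleton: take pairwise disjoint compact semialgebraic neighborhoods $K_i$ of the $z_i$, and observe that the sets $\pi(X\cap K_i)\cap M$ are nonempty, relatively closed, and pairwise disjoint (modulo $\cl(M)\setminus M$) in a small neighborhood of $q$ in $M$, so semialgebraic connectedness of $M_q$ forces $s=1$. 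Your intuition that a genuine jump would ``disconnect $M_q$'' points in this direction, but as written you neither isolate the clean dimension count nor supply the disjoint-compact-pieces argument, and the delicate stratum-by-stratum induction you propose in their place is not carried out.
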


Of course, if $M$ has dimension $2$, we may take $Y=\varnothing$ after shrinking $V$. However, this possibility
can no longer be extended for dimension $\geq3$ (see Example \ref{ce}). Namely,
\begin{mcor}[Continuous extension property]
Assume that $M$ is $2$-dimensional and the germ $M_q$ is semialgebraically connected for all $q\in\cl(M)$. Then for each semialgebraic function $f\in{\mathcal S}(M,R)$ there exists an open semialgebraic neighborhood $V$ of $M$ in $\cl(M)$ such that $f$ can be extended continuously to $V$.
\end{mcor}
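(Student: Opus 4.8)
The plan is to read the Corollary off from the Weak continuous extension property (Theorem~\ref{ext2}): when $\dim(M)=2$ the exceptional set $Y$ produced there is forced to be finite, hence closed, and so it can simply be discarded from the neighborhood $V$.

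First I would apply Theorem~\ref{ext2} to the given $f\in{\mathcal S}(M,R)$; this is legitimate because by hypothesis the germ $M_q$ is semialgebraically connected for all $q\in\cl(M)$. We obtain an open semialgebraic neighborhood $V$ of $M$ in $\cl(M)$, a semialgebraic set $Y\subset\cl(M)\setminus M$ with $\dim(Y_q)\le\dim(M_q)-2$ for every $q\in Y$, and a continuous extension of $f$ to $V\setminus Y$.

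Next I would pin down $Y$ using only this dimension bound and $\dim(M)=2$. Fix $q\in Y$. Since $q\in\cl(M)\setminus M$, the germ $M_q$ is non-empty and $q$ is a genuine accumulation point of $M$; this already gives $\dim(M_q)\ge1$, because a $0$-dimensional germ of $M$ at a point $q\notin M$ would be empty, contradicting $q\in\cl(M)$. If $\dim(M_q)=1$ then $\dim(Y_q)\le-1$, i.e. $Y_q=\varnothing$, against $q\in Y$. As moreover $\dim(M_q)\le\dim(M)=2$, we conclude $\dim(M_q)=2$ and $\dim(Y_q)\le0$ for all $q\in Y$. Since the dimension of a semialgebraic set equals the maximum of its local dimensions at its own points (this maximum is attained at every point of a top-dimensional cell of a semialgebraic cell decomposition of $Y$), it follows that $\dim(Y)\le0$; and a $0$-dimensional semialgebraic subset of $R^n$ is finite, hence closed in $\cl(M)$.

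Finally I would set $V':=V\setminus Y$. Since $Y$ is closed in $\cl(M)$ and disjoint from $M$ (because $Y\subset\cl(M)\setminus M$), the set $V'$ is an open semialgebraic neighborhood of $M$ in $\cl(M)$ contained in $V$, and the continuous extension of $f$ furnished by Theorem~\ref{ext2} is already defined on $V\setminus Y=V'$, which finishes the proof. The only delicate point is the passage from the pointwise estimate $\dim(Y_q)\le0$ to the \emph{finiteness} of $Y$: it is essential here that $Y$ is semialgebraic, since an arbitrary locally finite subset of $\cl(M)\setminus M$ could accumulate onto $M$ and hence could not be removed while keeping $M$ in the interior of the neighborhood.
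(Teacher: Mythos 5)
Your proof is correct and fills in exactly the details behind the paper's terse remark that one may take $Y=\varnothing$ after shrinking $V$: the exceptional set $Y$ from Theorem~\ref{ext2} is $0$-dimensional, hence finite and closed, hence can be removed from $V$. One small simplification: the chain of arguments ruling out $\dim(M_q)=0$ and $\dim(M_q)=1$ is unnecessary, since $\dim(Y_q)\le\dim(M_q)-2\le\dim(M)-2=0$ already follows directly for every $q\in Y$; but this detour does not affect the validity of the argument.
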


The article is organized as follows. In Section \ref{s2} we present all basic notions and notations used in this paper as well as some preliminary results. We give special attention to the semialgebraic pseudo-compactifications of a semialgebraic set that play a special role in this work. The reading can be started directly in Section \ref{s3} and referred to the Preliminaries only when needed. The aim of Section \ref{s3} is to prove Theorem \ref{ext2} (in \S\ref{4}) and the purpose of Section \ref{s4} is to show that each semialgebraic set can be appropriately embedded. In Section \ref{s5} we prove Lemma \ref{st1} (in \S\ref{1}) and Theorem \ref{st3} (in \S\ref{2}). 

The author is indebted with Prof. Schwartz and Prof. Tressl for encouraging him to use the theory of real closed rings and develop this work in the framework of general real closed fields and not only for the field $\R$ of real numbers, which has been the first approach. This more general procedure allows the author to present clearer and stronger results. The author is also very grateful to S. Schramm for a careful reading of the final version and for the suggestions to refine its redaction.

\section{Preliminaries on semialgebraic sets}\label{s2}

In this section we introduce some terminology, notations and preliminary results that are systematically used in this work. For each $f\in{\mathcal S}^{\diam}(M,R)$ and each semialgebraic subset $N\subset M$ we denote $Z_N(f):=\{x\in N:\, f(x)=0\}$. If $N=M$, we say that $Z(f):=Z_M(f)$ is the \em zeroset \em of $f$. We denote the open ball of $R^n$ with center $x$ and radius $\veps$ with $\Bb_n(x,\veps)$ and the corresponding closed ball with $\ol{\Bb}_n(x,\veps)$. In some cases it will be useful to assume that the semialgebraic set $M$ we are working with is bounded. Such assumption can be done without loss of generality. Namely, the semialgebraic homeomorphism
$$
h:\Bb_n(0,1)\to R^n,\ x\mapsto\frac{x}{\sqrt{1-\|x\|^2}}
$$
induces an $R$-isomorphism ${\mathcal S}(M,R)\to {\mathcal S}(h^{-1}(M),R),\,f\mapsto f\circ h$.

A crucial fact when dealing with the ring of semialgebraic functions on a semialgebraic set $M$ is that every closed semialgebraic subset $Z$ of $M$ is the zeroset $Z(h)$ of a (bounded) semialgebraic function $h$ on $M$; take for instance $h:=\min\{1,\dist(\cdot,Z)\}\in {\mathcal S}^*(M,R)$.

Local closedness has been revealed in the semialgebraic setting as an important property for the validity of results which are in the core of semialgebraic geometry. If $M\subset R^n$ is a semialgebraic set, then $\cl(M)$ and $U:=R^n\setminus(\cl(M)\setminus M)$ are also semialgebraic sets. In case $M$ is additionally locally closed, then $U$ is open in $R^n$ and so $M=\cl(M)\cap U$ can be written as the intersection of a closed and an open semialgebraic subset of $R^n$. Consider the subset
\begin{equation}
\rho(M):=\cl(\cl(M)\setminus M)\cap M.
\end{equation}
It follows from \cite[9.14-9.21]{dk2} that the semialgebraic set 
\begin{equation}\label{mlc}
M_{\lc}:=M\setminus\rho(M)=\cl(M)\setminus\cl(\cl(M)\setminus M)
\end{equation}
is the largest locally closed and dense subset of $M$ and it coincides with the set of points of $M$, which have a closed and bounded semialgebraic neighborhood in $M$.

An elementary but important fact states that the core of an $R$-homomorphism ${\mathcal S}(M,R)\to F$ is adjacent to $M$. Its proof is inspired partly by the proof of Efroymson's Substitution Theorem for Nash functions \cite[8.5.2]{bcr} (see also \cite[6.3]{fg} for a generalization).

\begin{lem}\label{adjacent}
Let $\varphi:{\mathcal S}(M,R)\to F$ be an $R$-homomorphism and ${\tt p}$ the core of $\varphi$. Then ${\tt p}$ is adjacent to $M$.
\end{lem}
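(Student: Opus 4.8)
The goal is to show that the core $\mathtt{p} := (\varphi(\pi_1), \ldots, \varphi(\pi_n))$ of an $R$-homomorphism $\varphi : \mathcal{S}(M,R) \to F$ lies in $N_F$ for every \emph{locally closed} semialgebraic set $N \subset R^n$ containing $M$. Since every locally closed $N$ can be written as $N = \cl(N') \cap U$ for suitable closed $\cl(N')$ and open $U$, and since $M \subset N$ forces $\cl(M) \subset \cl(N)$, the heart of the matter reduces to two separate checks: that $\mathtt{p}$ lies in $\cl(M)_F$ (hence in any closed set containing $M$, after extension), and that $\mathtt{p}$ avoids the closed complement $R^n \setminus U$ of any open $U \supset M$. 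The plan is to argue both via a suitable semialgebraic function, exploiting the key fact recalled just before the lemma: every closed semialgebraic subset of $M$ (and, after a small trick, every closed semialgebraic set meeting $M$ correctly) is the zeroset of a bounded semialgebraic function, whose behaviour is controlled by the Transfer Principle after applying $\varphi$.

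First I would treat the ``closure'' inclusion. Suppose $\mathtt{p} \notin \cl(M)_F$. Because $\cl(M)_F$ is closed in $F^n$ and is the extension of the closed semialgebraic set $\cl(M) \subset R^n$, the squared distance function $g := \dist(\cdot,\cl(M))^2$ is a polynomial-bounded-below semialgebraic function; more to the point, consider $h := \|\mathtt{x} - \mathtt{p}\|^{-2}$ suitably interpreted, or cleaner: let $P \in R[\mathtt{x}_1,\ldots,\mathtt{x}_n]$ — wait, $\mathtt{p}$ need not have coordinates in $R$. The correct move is the standard one from Efroymson's argument cited in the text: pick any semialgebraic function, evaluate via $\varphi$, and use that $\varphi$ is an $R$-algebra homomorphism preserving the relation ``is a sum of squares / is invertible''. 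Concretely, for the complement of an open $U \supset M$: the set $C := R^n \setminus U$ is closed semialgebraic and disjoint from $M$, so the function $u := \min\{1, \dist(\cdot, C)\} \in \mathcal{S}^*(M,R)$ is strictly positive on all of $M$, hence is a unit — in fact $1/u \in \mathcal{S}(M,R)$ and $u \cdot (1/u) = 1$. Applying $\varphi$ gives $\varphi(u) \cdot \varphi(1/u) = 1$, so $\varphi(u) \neq 0$. Now I claim $\varphi(u) = u_F(\mathtt{p})$ where $u_F$ is the extension; this is because $u$ agrees on $M$ with the restriction of a \emph{polynomial}-in-disguise — more precisely, by Transfer, the graph of $u$ over $M$ extends to the graph of $u_F$ over $M_F$, and the defining first-order formula relating $\mathtt{x}$, the value, and $\dist(\cdot,C)$ transfers. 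The slick way to see $\varphi(u) = u_F(\mathtt{p})$ without circularity: $u$ satisfies, on $M$, a first-order condition of the form ``for all $x$, [formula in $x$, $u(x)$, the projections, polynomial data over $R$]'', and $\varphi$ being an $R$-homomorphism that sends $\pi_i \mapsto \mathtt{p}_i$ forces the image to satisfy the transferred formula at $\mathtt{p}$. Hence $u_F(\mathtt{p}) = \varphi(u) \neq 0$, which means $\mathtt{p} \in U_F$ (since $u_F$ vanishes exactly on $C_F = F^n \setminus U_F$). Running the same argument with $C := R^n \setminus V$ for $V$ any open semialgebraic set containing $M$ shows $\mathtt{p} \in \bigcap_{V \supset M \text{ open}} V_F = \cl(M)_F$ as well — actually this last intersection equals $\cl(M)_F$ by the behaviour of extension under closures, so the closure inclusion comes for free from the same mechanism.

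The final assembly: let $N \subset R^n$ be locally closed with $M \subset N$. Write $N = \cl(N) \cap U$ with $U$ open semialgebraic; then $U \supset M$ (as $N \supset M$ and $N \subset U$), so by the above $\mathtt{p} \in U_F$. Also $M \subset N \subset \cl(N)$ forces $\cl(M) \subset \cl(N)$, and $\mathtt{p} \in \cl(M)_F \subset \cl(N)_F$. Since extension commutes with intersection, $N_F = \cl(N)_F \cap U_F$, and $\mathtt{p}$ lies in both factors, so $\mathtt{p} \in N_F$. As $N$ was an arbitrary locally closed semialgebraic superset of $M$, this says precisely that $\mathtt{p}$ is adjacent to $M$, i.e.\ $\mathtt{p} \in \widehat{M_F}$.

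\textbf{Main obstacle.} The one delicate point is the identification $\varphi(u) = u_F(\mathtt{p})$ for a bounded semialgebraic $u$ that is not literally a polynomial — one must carefully set up the first-order sentence over $R$ that ``pins down'' the value $u(x)$ in terms of $x$ and the polynomial data, then feed it through $\varphi$ and Transfer. This is exactly the step the text flags as inspired by the proof of Efroymson's Substitution Theorem in \cite[8.5.2]{bcr}; the cleanest route is probably to note that for the \emph{specific} units $u = \min\{1,\dist(\cdot,C)\}$ one can instead use that $C$ being disjoint from $M$ means some concrete semialgebraic equation/inequality holds identically on $M$ (e.g.\ $u$ is bounded below by a positive semialgebraic function of bounded ``complexity''), reducing to finitely many polynomial relations to which the plain polynomial substitution theorem applies. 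I expect essentially no difficulty beyond this bookkeeping.
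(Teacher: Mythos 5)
Your high-level plan (split into proving $\mathtt{p}\in\cl(M)_F$ and $\mathtt{p}\in U_F$ for every open semialgebraic $U\supset M$, then assemble via $N=\cl(N)\cap U$) is exactly the decomposition the paper uses, but the proposal has two genuine gaps in execution.

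\textbf{Gap 1: the identity $\varphi(u)=u_F(\mathtt{p})$ is not available for your choice of $u$.} You pick $u=\min\{1,\dist(\cdot,C)\}$ and want to conclude that $\varphi(u)\neq0$ forces $u_F(\mathtt{p})\neq0$. The step $\varphi(u)=u_F(\mathtt{p})$ is precisely the assertion of Lemma~\ref{st1}(i), i.e.\ a piece of the Substitution Lemma that this lemma is a stepping stone towards; appealing to ``Transfer'' to force it is circular for a general $u$, because nothing in the algebraic structure tells $\varphi$ how to act on an arbitrary bounded semialgebraic function. The paper sidesteps this by \emph{never} using a distance function. Instead it writes $R^n\setminus U=\bigcup_{i}\{h_{i1}\geq 0,\ldots,h_{i\ell}\geq 0\}$ with polynomials $h_{ij}$ and constructs $h:=\prod_{i}\sum_{j}(h_{ij}-|h_{ij}|)^2$. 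For such an expression one computes $\varphi(h)$ \emph{symbolically}: $\varphi(h_{ij})=h_{ij,F}(\mathtt{p})$ because $h_{ij}$ is a polynomial in the $\pi_k$, and $\varphi(|h_{ij}|)=|h_{ij,F}(\mathtt{p})|$ because $|h_{ij}|$ is the (unique) nonnegative square root of $h_{ij}^2$ and $\varphi$ sends squares to squares. Since $Z(h)\cap M=\varnothing$, $h$ is a unit and moreover a sum of squares, so $0<\varphi(h)=h_F(\mathtt{p})$, giving $\mathtt{p}\in U_F$ with no appeal to a substitution principle that has not yet been proved. Your closing remark that this is ``bookkeeping'' understates it: the choice of a function on which $\varphi$ acts computably is the content of the proof, and $\min\{1,\dist(\cdot,C)\}$ is not such a function.

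\textbf{Gap 2: $\bigcap_{V\supset M\ \text{open}}V_F\neq\cl(M)_F$ in general.} You claim the closure inclusion ``comes for free'' because this intersection equals $\cl(M)_F$. It does not: take $M=\{0\}\subset R$ and $F\supsetneq R$ containing a positive infinitesimal $t$; then $t\in(-\veps,\veps)_F$ for every $\veps\in R_{>0}$, so $t$ lies in the intersection, yet $\cl(M)_F=\{0\}$. Conversely, for $M=(0,1)$ the point $0\in\cl(M)$ is missed by the open superset $V=(0,1)$, so the intersection need not even contain $\cl(M)_F$. Thus the inclusion $\mathtt{p}\in\cl(M)_F$ requires its own argument. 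The paper supplies one by a dual mechanism: write $\cl(M)=\bigcup_i\{g_{i1}\geq0,\ldots,g_{is}\geq0\}$, set $g:=\prod_i(g_{i1}-|g_{i1}|)^2$, observe $g|_M=0$ so $\varphi(g)=0$, and compute $\varphi(g)=\prod_i(g_{i1,F}(\mathtt{p})-|g_{i1,F}(\mathtt{p})|)^2$ symbolically exactly as above. If $\mathtt{p}\notin\cl(M)_F$ one may arrange $g_{i1,F}(\mathtt{p})<0$ for all $i$, making this product strictly positive, a contradiction. Notice the two halves use opposite mechanisms: the closed-set half uses a function \emph{vanishing} on $M$ (so $\varphi(g)=0$), the open-set half a \emph{unit} (so $\varphi(h)\neq0$); you cannot get one from the other.

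Your final assembly paragraph is correct once both inclusions are in hand, so the structural plan is sound; what is missing is precisely the symbolic device that makes $\varphi$ computable on the test functions.
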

\begin{proof}
Write $\cl(M)=\bigcup_{i=1}^r\{g_{i1}\geq0,\ldots,g_{is}\geq0\}$ for some $g_{ij}\in R[\x]$. Suppose first ${\tt p}\not\in\cl(M)_{F}$; we may assume $g_{i1,F}({\tt p})<0$ for $i=1,\ldots,r$. Consider the semialgebraic function $g:=\prod_{i=1}^r(g_{i1}-|g_{i1}|)^2$ and observe that $g|_M=0$; hence, $\varphi(g)=0$. However, since $\varphi$ is an $R$-homomorphism and $|g_{ij}|=\sqrt{g_{ij}^2}$, we have $\varphi(g_{ij})=g_{ij,F}({\tt p})$ and $\varphi(|g_{ij}|)=|g_{ij,F}({\tt p})|$. Thus, 
$$
0=\varphi(g)=\prod_{i=1}^r(g_{i1,F}({\tt p})-|g_{i1,F}({\tt p})|)^2>0
$$
because $g_{i1,F}({\tt p})<0$ for $i=1,\ldots,r$, which is a contradiction. We conclude ${\tt p}\in\cl(M)_{F}$. 

Now let $N$ be a locally closed semialgebraic set that contains $M$. Then $N=\cl(N)\cap U$ where $U$ is the open semialgebraic set $R^n\setminus(\cl(N)\setminus N)$; observe that $M\subset U$. As $\cl(M)\subset\cl(N)$ it is enough to check that ${\tt p}\in U_{F}$. 

Indeed, write $R^n\setminus U:=\bigcup_{i=1}^k\{h_{i1}\geq0,\ldots,h_{i\ell}\geq0\}$ and consider the semialgebraic function $h:=\prod_{i=1}^k\sum_{j=1}^\ell(h_{ij}-|h_{ij}|)^2$. Since $Z(h)\cap M=(R^n\setminus U)\cap M=\varnothing$, we have that $h$ is a unit in ${\mathcal S}(M,R)$ and so $0<\varphi(h)=h_F({\tt p})$; hence, ${\tt p}\in F^n\setminus Z(h_{F})=U_{F}$, as required.
\end{proof}

A crucial tool when dealing with rings of bounded semialgebraic functions is the use of semialgebraic pseudo-compactifications.

\subsection{Semialgebraic pseudo-compactifications of a semialgebraic set.}\label{cita}

Let $(X,{\tt j})$ be a semialgebraic pseudo-compactification of $M$. It holds that ${\mathcal S}(X,R)={\mathcal S}^*(X,R)$ since the image of a bounded and closed semialgebraic set under a semialgebraic function is again bounded and closed. The embedding ${\tt j}$ induces an $R$-monomorphism ${\tt j}^*:{\mathcal S}(X,R)\hookrightarrow{\mathcal S}^{\diam}(M,R),\ f\mapsto f\circ{\tt j}$ and we denote $\gta\cap{\mathcal S}(X,R):=({\tt j}^{*})^{-1}(\gta)$ for every ideal $\gta$ of ${\mathcal S}^{\diam}(M,R)$. The following properties showed in \cite[\S1]{fg2} are decisive:

\vspace{2mm}
\begin{substeps}{cita}\label{a}
\em
For each finite family $\{f_1,\ldots,f_r\}\subset{\mathcal S}^*(M,R)$ there exists a semialgebraic pseudo-compacti\-fication $(X,{\tt j})$ of $M$ and semialgebraic functions $F_1,\ldots,F_r\in{\mathcal S}(X,R)$ such that $f_i=F_i\circ{\tt j}$. 
\em
\end{substeps}

\vspace{2mm}
\begin{substeps}{cita}
Let ${\mathfrak F}_M$ be the collection of all semialgebraic pseudo-compactifications of $M$. Given $(X_1,{\tt j}_1),(X_2,{\rm j}_2)\in{\mathfrak F}_M$, we say that $(X_1,{\tt j}_1)\preccurlyeq(X_2,{\rm j}_2)$ if and only if there exists a (unique) continuous surjective map $\rho:X_2\to X_1$ such that $\rho\circ{\tt j}_2={\tt j}_1$; the uniqueness of $\rho$ follows because $\rho|_M={\tt j}_1\circ({\tt j}_2|_M)^{-1}$ and $M$ is dense in $X_i$. It holds that: \em $({\mathfrak F}_M,\preccurlyeq)$ is a directed set.\em
\end{substeps}

\vspace{2mm}
\begin{substeps}{cita}
We have a collection of rings $\{{\mathcal S}(X,R)\}_{(X,{\tt j})\in{\mathfrak F}_M}$ and $R$-monomorphisms 
$$
\rho_{X_1,X_2}^*:{\mathcal S}(X_1,R)\to{\mathcal S}(X_2,R),\ f\mapsto f\circ\rho
$$ 
for $(X_1,{\tt j}_1)\preccurlyeq(X_2,{\tt j}_2)$ such that 
\begin{itemize}
\item $\rho_{X_1,X_1}^*=\id$ and
\item $\rho_{X_1,X_3}^*=\rho_{X_2,X_3}^*\circ\rho_{X_1,X_2}^*$ if $(X_1,{\tt j}_1)\preccurlyeq(X_2,{\tt j}_2)\preccurlyeq(X_3,{\tt j}_3)$.
\end{itemize}
We conclude: \em The ring ${\mathcal S}^*(M,R)$ is the direct limit of the directed system $\qq{{\mathcal S}(X,R),\rho_{X_1,X_2}^*}$ together with the homomorphisms ${\tt j}^*:{\mathcal S}(X,R)\hookrightarrow{\mathcal S}^*(M,R)$ where $(X,{\tt j})\in{\mathfrak F}_M$\em. We write ${\mathcal S}^*(M,R)=\displaystyle\lim_{\longrightarrow}{\mathcal S}(X,R)$. 
\end{substeps}

\vspace{1mm}
\begin{substeps}{cita}\label{inequality}
On the other hand, \em the ring ${\mathcal S}(M,R)$ is the localization ${\mathcal S}^*(M,R)_{{\mathcal W}}$ of ${\mathcal S}^*(M,R)$ at the multiplicative set ${\mathcal W}$ of those functions $f\in{\mathcal S}^*(M,R)$ such that $Z_M(f)=\varnothing$\em. In particular, \em if $\gtp$ is a prime ideal of ${\mathcal S}^*(M,R)$ that does not meet ${\mathcal W}$, then $\qf({\mathcal S}^*(M,R)/\gtp)=\qf({\mathcal S}(M,R)/\gtp{\mathcal S}(M,R))$\em.
\end{substeps}

\begin{remark}\label{adjacentbounded}
Proceeding similarly to the first part of the proof of Lemma \ref{adjacent}, one may prove that if $M$ is bounded and $\varphi:{\mathcal S}^*(M,R)\to F$ is an $R$-homomorphism, then the core ${\tt p}:=(\varphi(\pi_1),\ldots,\varphi(\pi_n))$ of $\varphi$ belongs to $\cl(M)_F$. We need the boundness of $M$ to guarantee that the polynomial functions on $M$ are bounded functions, that is, $\psd(M)\hookrightarrow{\mathcal S}^*(M,R)$. 

On the other hand, we cannot adapt the second part of the proof of Lemma \ref{adjacent} to the ring ${\mathcal S}^*(M,R)$, that is, we cannot assure that ${\tt p}$ is adjacent to $M$, because a function $f\in{\mathcal S}^*(M,R)$ with empty zero set needs not to be a unit. Thus, we can only ensure that ${\tt p}\in\cl(M)_F$. 
\end{remark}

As a kind of converse of the previous remark, we propose the following result.

\begin{lem}\label{exsx}
Assume that $M$ is bounded and let $X:=\cl(M)$ and ${\tt p}\in X_F$. Then there exists an $R$-homomorphism $\varphi:{\mathcal S}^*(M,R)\to F$ whose core is ${\tt p}$.
\end{lem}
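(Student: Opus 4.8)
The plan is to build $\varphi$ by realizing $\mathcal{S}^*(M,R)$ as a direct limit over semialgebraic pseudo-compactifications and applying the Substitution Lemma \ref{st1}(i) on a cofinal piece of that system. First I would observe that since $M$ is bounded, $X:=\cl(M)$ is closed and bounded, and the inclusion ${\tt j}_0:M\hookrightarrow X$ is a semialgebraic embedding with dense image; hence $(X,{\tt j}_0)$ is a semialgebraic pseudo-compactification of $M$, and it is in fact the terminal (largest) element of $({\mathfrak F}_M,\preccurlyeq)$: for any $(Y,{\tt j})\in{\mathfrak F}_M$ the map $Y\to X$ sending ${\tt j}(x)\mapsto x$ on the dense subset $M$ extends continuously to all of $Y$ by properness/closedness of $X$ (this uses that $Y$ is closed and bounded and the graph of the extension is semialgebraic), so $(X,{\tt j}_0)\preccurlyeq(Y,{\tt j})$ — wait, the direction here needs care: the transition maps go $\rho:X_2\to X_1$ with $(X_1,{\tt j}_1)\preccurlyeq(X_2,{\tt j}_2)$, so $(X,{\tt j}_0)$ being dominated by every $(Y,{\tt j})$ means $(X,{\tt j}_0)\preccurlyeq(Y,{\tt j})$ and there is a surjection $Y\to X$. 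Thus the directed system has $(X,{\tt j}_0)$ as its \emph{initial} object, ${\tt j}_0^*:\mathcal{S}(X,R)\hookrightarrow\mathcal{S}^*(M,R)$ is one of the canonical maps into the direct limit, and since $(X,{\tt j}_0)$ sits below everything, every transition map $\rho_{X,Y}^*:\mathcal{S}(X,R)\to\mathcal{S}(Y,R)$ is injective and the limit $\mathcal{S}^*(M,R)=\varinjlim\mathcal{S}(Y,R)$ contains $\mathcal{S}(X,R)$ as a subring.

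Next I would produce the starting homomorphism on $\mathcal{S}(X,R)$. Since $X=\cl(M)$ is closed (hence locally closed) and ${\tt p}\in X_F$, the point ${\tt p}$ is trivially adjacent to $X$ and lies in $X_F$, so Lemma \ref{st1}(i) gives a unique $R$-homomorphism $\psi_{\tt p}:={\rm ev}_{X_F,{\tt p}}\circ{\tt i}_{X,F}:\mathcal{S}(X,R)\to F$ with core ${\tt p}$. The task is then to extend $\psi_{\tt p}$ along ${\tt j}_0^*:\mathcal{S}(X,R)\hookrightarrow\mathcal{S}^*(M,R)$ to an $R$-homomorphism $\varphi:\mathcal{S}^*(M,R)\to F$. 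Here I would invoke (the not-yet-displayed but forthcoming) Lemma \ref{exsx} ``sibling'' fact — or rather, prove directly — that any $R$-homomorphism from a ring $\mathcal{S}(X,R)$ into $F$ extends to the direct limit $\mathcal{S}^*(M,R)=\varinjlim\mathcal{S}(Y,R)$. The standard mechanism: $F$ is a field, so it suffices to extend step by step along each $\rho_{X,Y}^*$; equivalently, $\ker\psi_{\tt p}$ is a prime ideal $\gtp_0$ of $\mathcal{S}(X,R)$, and one needs a prime $\gtq$ of $\mathcal{S}^*(M,R)$ lying over it together with a field map $\mathcal{S}^*(M,R)/\gtq\to F$. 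Such a $\gtq$ exists by going-up / lying-over for the integral-ish extension, but more concretely one uses the universal property of real closed rings: $\mathcal{S}^*(M,R)$ is a real closed ring, $\psi_{\tt p}$ factors through $\mathcal{S}(X,R)/\gtp_0\hookrightarrow F$, and since $\mathcal{S}^*(M,R)$ is generated over ${\tt j}_0^*(\mathcal{S}(X,R))$ by elements that are ``bounded semialgebraic'' hence satisfy compatible constraints, one can choose compatible prime ideals $\gtp_Y$ in each $\mathcal{S}(Y,R)$ with $\gtp_Y\cap\mathcal{S}(X,R)=\gtp_0$ and $\qf(\mathcal{S}(Y,R)/\gtp_Y)$ embedding into $F$ over $\qf(\mathcal{S}(X,R)/\gtp_0)$, using that the latter is a \emph{real closed field} and $F$ is real closed so there is at most one such embedding and it exists provided the transcendence degree fits — which it does because we may take at each stage the pullback prime. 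Taking the union/limit of these data yields $\varphi$.

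The main obstacle I expect is precisely this last extension step: ensuring that the field $\qf(\mathcal{S}(X,R)/\ker\psi_{\tt p})$ — which is a real closed field by the theory of real closed rings — embeds over $R$ into $F$ compatibly, and then that the resulting prime and residue-field map propagate coherently through the whole directed system rather than just one stage. The cleanest route is probably to bypass the general limit argument and instead use the \emph{localization} description \ref{inequality}: $\mathcal{S}(M,R)=\mathcal{S}^*(M,R)_{\mathcal W}$, combined with Lemma and Definition \ref{bri} or the direct-limit presentation, to reduce to a single well-chosen pseudo-compactification. Concretely: let $\gtp_0:=\ker\psi_{\tt p}$, extend it to \emph{some} prime $\gtq$ of $\mathcal{S}^*(M,R)$ with $\gtq\cap\mathcal{S}(X,R)=\gtp_0$ (such $\gtq$ exists since ${\tt j}_0^*$ is injective and one can take a minimal prime over $\gtp_0\mathcal{S}^*(M,R)$, checking it contracts correctly), observe $\mathcal{S}^*(M,R)/\gtq$ is an integral domain whose fraction field is a real closed field sandwiched between $\qf(\mathcal{S}(X,R)/\gtp_0)$ and — crucially not larger in a way that fails to embed in $F$, because $\mathcal{S}(X,R)\hookrightarrow\mathcal{S}^*(M,R)$ induces an \emph{isomorphism of fraction fields after modding out by a brimming compactification} (Lemma \ref{bri}), and here $(X,{\tt j}_0)$ is already the top compactification so it \emph{is} brimming for $\gtq$. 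Hence $\qf(\mathcal{S}^*(M,R)/\gtq)=\qf(\mathcal{S}(X,R)/\gtp_0)$ embeds into $F$ via the unique real-closed-field map extending $\psi_{\tt p}$'s induced embedding, and composing $\mathcal{S}^*(M,R)\twoheadrightarrow\mathcal{S}^*(M,R)/\gtq\hookrightarrow\qf(\cdots)\hookrightarrow F$ gives $\varphi$. Finally I would check the core: $\varphi(\pi_i)=\psi_{\tt p}(\pi_i|_X)=\pi_{i,F}({\tt p})={\tt p}_i$, so the core of $\varphi$ is ${\tt p}$, as required. \qed
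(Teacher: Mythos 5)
Your argument has two concrete gaps, and in any case takes a far more complicated route than the paper. First, you assert that $(X,\iota)$ (with $\iota:M\hookrightarrow X$ the inclusion) is an initial object of $(\mathfrak{F}_M,\preccurlyeq)$, so that every $(Y,{\tt j})\in\mathfrak{F}_M$ admits a continuous surjection onto $X$ compatible with the embeddings. This is false: take $M=(0,1)\cup(1,2)\subset R$ and ${\tt j}:M\to R^2$ with ${\tt j}(x)=(x,0)$ on $(0,1)$ and ${\tt j}(x)=(3-x,0)$ on $(1,2)$; then $Y:=\cl({\tt j}(M))=[0,2]\times\{0\}$ is a pseudo-compactification of $M$, but there is no continuous $\rho:Y\to X$ with $\rho\circ{\tt j}=\iota$, because ${\tt j}^{-1}$ has incompatible one-sided limits $1$ and $2$ at $(1,0)$. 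Second, and decisively, the claim that $(X,\iota)$ is brimming for $\gtq$ is unjustified and in fact circular: Lemma \ref{bri} only provides the existence of \emph{some} brimming $(Y,{\tt j})$ for a given prime, depending on that prime; nothing forces the coarsest compactification $X=\cl(M)$ to be brimming. If you could prove $\qf({\mathcal S}^*(M,R)/\gtq)=\qf({\mathcal S}(X,R)/\gtp_0)$, you would already have the embedding into $F$, which is precisely what needs to be shown. You also do not verify that a prime $\gtq$ of ${\mathcal S}^*(M,R)$ contracting to $\gtp_0$ exists; a minimal prime over $\gtp_0{\mathcal S}^*(M,R)$ need not contract to $\gtp_0$, and the restriction ${\mathcal S}(X,R)\hookrightarrow{\mathcal S}^*(M,R)$ is not integral, so lying over is not automatic.

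The paper's proof avoids all of this machinery. By the curve selection lemma \cite[2.5.5]{bcr} there is a semialgebraic path $\alpha:[0,1]_F\to F^n$ with $\alpha(0)={\tt p}$ and $\alpha((0,1]_F)\subset M_F$; one defines $\varphi(f):=\lim_{t\to 0^+}(f_F\circ\alpha)(t)$ and checks the limit exists because $f_F\circ\alpha$ is, after shrinking its domain, a bounded monotone Nash function of one variable (its graph being a one-dimensional semialgebraic subset of $F^2$). The homomorphism property and the computation of the core are then immediate, and choosing essentially different paths through ${\tt p}$ yields different $\varphi$ (Remark \ref{exsx2}). I would abandon the direct-limit route here and use the curve-selection argument instead.
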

\begin{proof}
By the curve selection lemma \cite[2.5.5]{bcr} there exists a semialgebraic path $\alpha:[0,1]_F\to F^n$ such that $\alpha(0)={\tt p}$ and $\alpha((0,1]_F)\subset M_F$. Consider the map
$$
\varphi:{\mathcal S}^*(M,R)\to F,\ f\mapsto\lim_{t\to 0^+}(f_F\circ\alpha)(t).
$$
Once we guarantee that $\lim_{t\to 0^+}(f_F\circ\alpha)(t)$ exists, it is clear that $\varphi$ is an $R$-homomorphism whose core is ${\tt p}$.

Indeed, as the graph of $g:=f_F\circ\alpha$ is a $1$-dimensional semialgebraic subset of $F^2$, it is a finite union of singletons and 1-dimensional Nash manifolds (see \cite[2.9.10]{bcr}); hence, we may assume after shrinking the domain of $g$ that the restriction of $g$ to $(0,1)_F$ is Nash. If $g$ is constant on $(0,1)_F$, the existence of the limit is clear. Otherwise the zero set of $g'$ is finite and after shrinking the domain of $g$, we may assume that it is empty and without loss of generality that $g$ is decreasing on $(0,1)_F$. As $g$ is a bounded function, $g((0,1]_F)=(\lambda,g(1)]$ for some $\lambda\in F$ (use \cite[2.1.7]{bcr}) and so $\lim_{t\to 0^+}g(t)=\lambda$.
\end{proof}
\begin{remark}\label{exsx2}
Notice that each pair of essentially different semialgebraic paths $\alpha_1,\alpha_2$ in $M_F$ through the point ${\tt p}$ define different $R$-homomorphisms $\varphi_i:{\mathcal S}^*(M,R)\to F$ whose core is ${\tt p}$.
\end{remark}

\subsection{Semialgebraic depth and transcendence degree}\label{sdtd}
We recall a different point of view for the concept of semialgebraic depth devised in \cite[\S2]{fg2} (see also \cite{fe} for a deeper study of this invariant). Given a prime ideal $\gtp$ of ${\mathcal S}(M,R)$, we define the \em semialgebraic depth of $\gtp$ \em as
$$
{\rm d}_M(\gtp):=\min\{\dim(Z(f)):\, f\in\gtp\}.
$$
Now given a point ${\tt p}:=({\tt p}_1,\ldots,{\tt p}_n)\in M_F$, consider the prime ideal $\gtp({\tt p}):=\{f\in{\mathcal S}(M,R):\,f_F({\tt p})=0\}$. As one can check, it holds ${\tt d}_M({\tt p})={\tt d}_M(\gtp({\tt p}))$ (see \eqref{eqdM}). On the other hand, let $F$ be the real closed field $\qf({\mathcal S}(M,R)/\gtp)$ and ${\tt p}\in F^n$ the core of the $R$-homomorphism
$$
\varphi:{\mathcal S}(M,R)\to{\mathcal S}(M,R)/\gtp\hookrightarrow F.
$$ 
Then $\gtp=\gtp({\tt p})$ and so ${\tt d}_M({\tt p})={\tt d}_M(\gtp)$. 

Recall that an ideal $\gta$ of ${\mathcal S}(M,R)$ is a \em $z$-ideal \em if a function $f\in{\mathcal S}(M,R)$ belongs to $\gta$ whenever there exists $g\in\gta$ such that $Z(g)\subset Z(f)$. If $M$ is locally closed, all radical ideals of ${\mathcal S}(M,R)$ are $z$-ideals \cite[2.6.6]{bcr}. The following properties are proved in \cite[Thm.3]{fg2}:

\vspace{2mm}\setcounter{substep}{0}
\begin{substeps}{sdtd}\label{rmdc}
\em Let $\gtp$ be a prime $z$-ideal of ${\mathcal S}(M,R)$. Then ${\rm d}_M(\gtp)=\tr\deg_R({\mathcal S}(M,R)/\gtp)$.
\end{substeps}

\vspace{2mm}
\begin{substeps}{sdtd}\label{rmd}
\em Let $X$ be a closed and bounded semialgebraic set and $\gtp$ a prime ideal of ${\mathcal S}(X,R)$. Then
$$
{\rm d}_X(\gtp)=\tr\deg_R(R[\x]/(\gtp\cap R[\x]))=\tr\deg_R({\mathcal S}(X,R)/\gtp).
$$
In particular, if ${\tt p}\in X_F$ and $R({\tt p})$ is the smallest subfield of $F$ that contains $R$ and the coordinates of the tuple ${\tt p}$,
$$
{\rm d}_X({\tt p})=\tr\deg_R(R({\tt p}))=\tr\deg_R(\qf({\mathcal S}(X,R)/\gtp({\tt p}))).
$$

\em The following example illustrates the algebraic interpretation of the semialgebraic depth.
\end{substeps}

\begin{example}\label{space}
Let $F:=\R(\{\t^*\})$ be the field of meromorphic Puiseux series with coefficients in $\R$. Consider the point ${\tt p}:=(\t,e^\t)\in F^2$. Clearly, $\tr\deg_{\R}(\R(\t,e^\t))=2$ and let us check that ${\rm d}_{\R^2}({\tt p})=2$. Indeed, we have to prove that if $N\subset\R^2$ is a closed semialgebraic subset such that ${\tt p}\in N_F$, then it has dimension $2$. Otherwise, choose a $1$-dimensional closed semialgebraic set $N$ such that ${\tt p}\in N_F$. By \cite[2.9.10]{bcr}, we may assume that $N$ is the union of two points and a Nash manifold Nash diffeomorphic to the interval $(0,1)$. Thus, there exists a non zero polynomial $P\in\R[\t,\x]$ such that $N\subset Z(P)$; hence, $P|_{N_F}=0$ and so $P(\t,e^\t)=0$, which is a contradiction since $\t$ and $e^\t$ are algebraically independent over $\R$. 
\end{example}

\section{Weak continuous extension of a semialgebraic function}\label{s3}

The purpose of this section is to prove Theorem \ref{ext2}. We begin with some illustrating examples.

\begin{examples}
(i) Let $M:=(R^2\setminus\{y=0\})\cup\{(0,0)\}$ and $f:M\setminus\{(0,0)\}\to R$ the semialgebraic function that maps $\{y>0\}$ onto $1$ and $\{y<0\}$ onto $0$. Since $f$ is bounded on $M\setminus\{(0,0)\}$, the semialgebraic map $g:=(x^2+y^2)f$ can be extended continuously to the origin. However, $g$ cannot be extended continuously to any neighborhood of the origin in $ R^2=\cl(M)$.

(ii) Let $M':=\{(y-x)(y+x)>0\}\cup\{(0,0)\}$ and consider the bounded semialgebraic function
$$
f:R^2\setminus\{xy=0\}\to R,\ (x,y)\mapsto\frac{xy}{|xy|}.
$$
Clearly, $h:=xf\in{\mathcal S}(M,R)$ can be extended continuously to $\{x=0\}$. Consider the semialgebraic maps
$$
\varphi:M\to M',\ (x,y)\mapsto(x,y+h(x,y))\quad\text{and}\quad\psi:M'\to M,\ (x,y)\mapsto(x,y-h(x,y)),
$$
which are mutually inverse. Thus, $M$ and $M'$ are semialgebraically homeomorphic and so ${\mathcal S}(M,R)$ and ${\mathcal S}(M',R)$ are isomorphic. It follows from the following result that each $g\in{\mathcal S}(M',R)$ can be extended continuously to a neighborhood of the origin in $\cl(M)=\{(y-x)(y+x)\geq0\}$.
\end{examples}

\subsection*{Triangulation of semialgebraic sets} 
We will use often the following fact: \em Let $X\subset R^n$ be a bounded and closed semialgebraic set and $\{S_1,\ldots,S_r\}$ a family of semialgebraic subsets of $X$. Then there exists a finite simplicial complex $K$ and a semialgebraic homeomorphism $\Phi:|K|\to X$ such that the restriction $\Phi|_{\sigma^0}:\sigma^0\to R^n$ is a Nash embedding for each $\sigma\in K$ and each set $S_i$ is the union of finitely many open simplices $\sigma^0$ of $K$ \em (see \cite[9.2.1, 9.2.3]{bcr} for further details).

\subsection{Proof of Theorem \ref{ext2}}\label{4}

Consider the bounded semialgebraic function $g:=\frac{f}{1+|f|}$, which satisfies $|g(x)|<1$ for all $x\in M$. Note that since $f=\theta(g)$ where $\theta:(-1,1)\to R,\ t\mapsto \frac{t}{1-|t|}$, it is enough to prove the statement for $g$. 

\noindent{\em Strategy of the Proof:} First we construct the open neighborhood $V$ of $M$ in $\cl(M)$ quoted in the statement. Next we construct the set $Y$ in several steps by describing the different types of problematic points to extend $g$ continuously and proving that each of these sets $Y_i$ has local dimension upperly bounded by the local dimension of $M$ minus $2$. We also prove for technical reasons that $V\setminus(Y\cup M)$ is locally closed. Finally, we construct the continuous (semialgebraic) extension of $g$ to $V\setminus Y$.

\noindent{\bf Step 1.} \em General notations and construction of the open semialgebraic neighborhood $V$\em. We assume that $M$ is bounded. Notice that $M':=\gr(g)\subset M\times R$ is a bounded semialgebraic set and let $X:=\cl(M')$. Consider the projections 
$$
\pi:R^{n+1}\to R^n,\ (x,x_{n+1})\mapsto x\quad\text{and}\quad\pi_{n+1}:R^{n+1}\to R,\ (x_1,\ldots,x_n,x_{n+1})\mapsto x_{n+1}.
$$ 
Define $S:=\{x\in X:\,\pi_{n+1}(x)=\pm1\}$ and $\varrho:=\pi|_X$; observe that $M'\cap S=\varnothing$. 

\vspace{2mm}
\noindent We claim: \em The fiber $\varrho^{-1}(p)$ is a singleton for all $p\in M$\em.

Indeed, suppose by contradiction that there exists a point $(q,\lambda)\in X$ such that $q\in M$ and $g(q)\neq\lambda$. By the curve selection lemma \cite[2.5.5]{bcr} there exists a continuous semialgebraic path $\alpha:[0,1]\to R^{n+1}$ such that $\alpha(0)=(q,\lambda)$ and $\alpha((0,1])\subset M'$. Define $\beta:=\pi\circ\alpha:[0,1]\to M$ and observe that $\alpha|_{(0,1]}=(\beta|_{(0,1]},g\circ\beta|_{(0,1]})$, $\beta(0)=q$ and $\alpha(0)=(q,\lambda)$ where $\lambda\neq g(q)$, which is against the continuity of $g$ at $q$. 

Now since $S$ is a bounded and closed semialgebraic set, so is $C:=\varrho(S)$. Define $V:=\cl(M)\setminus C$, which is an open semialgebraic subset of $\cl(M)$ and observe that $M\subset V$ because $\varrho^{-1}(p)$ is a singleton and $|g(p)|<1$ for each $p\in M$.

\noindent{\bf Step 2.} \em Construction of the semialgebraic set $Y$\em. 

\noindent{\bf S2.a.} Define $Y_0:=\{p\in\cl(M):\, \dim(\varrho^{-1}(p))=1\}$. We claim: \em$\dim(Y_{0,p})\leq\dim(M_p)-2$ for all $p\in\cl(M)$ and the fiber $\varrho^{-1}(q)$ is a singleton for each $q\in\cl(M)\setminus Y_0$\em. 

Indeed, assume by contradiction that there exists $p\in\cl(M)$ such that $\dim(Y_{0,p})\geq\dim(M_p)-1$ and let $U$ be an open semialgebraic neighborhood of $p$ such that $\dim(Y_0\cap U)\geq\dim(M\cap U)-1$. Notice that since $\varrho^{-1}(x)$ has dimension $1$ for each $x\in Y_0\cap U$,
$$
\dim(\varrho^{-1}(Y_0\cap U))=\dim(Y_0\cap U)+1\geq\dim(M\cap U). 
$$
On the other hand, since $M\cap Y_0=\varnothing$ and $\varrho^{-1}(M)=M'$, 
$$
\varrho^{-1}(Y_0\cap U)\subset (X\setminus M')\cap\varrho^{-1}(U)\subset(\cl(M'\cap\varrho^{-1}(U))\setminus(M'\cap\varrho^{-1}(U)));
$$ 
hence, by \cite[2.8.13]{bcr},
$$
\dim(M\cap U)\leq\dim(\varrho^{-1}(Y_0\cap U))<\dim(M'\cap\varrho^{-1}(U))=\dim(M\cap U),
$$
which is a contradiction. Thus, $\dim(Y_{0,p})\leq\dim(M_p)-2$ for all $p\in\cl(M)$.

Fix a point $q\in\cl(M)\setminus Y_0$ and let us check that $\varrho^{-1}(q)$ is a singleton. Since $q\not\in Y_0$, we know that $\varrho^{-1}(q)=\{z_1,\ldots,z_s\}$ is a finite set. Choose pairwise disjoint bounded and closed semialgebraic sets $K_i\subset R^{n+1}$ such that $z_i\in\Int(K_i)$. Let $K:=X\setminus\bigcup_{i=1}^s\Int(K_i)$, which is a closed and bounded semialgebraic set and so it is also $\pi(K)$. As $\varrho^{-1}(q)=\{z_1,\ldots,z_s\}$, we deduce $q\not\in\pi(K)$. Define $W:=R^n\setminus\pi(K)$, which is an open semialgebraic subset of $R^n$ that contains $q$ and satisfies $\varrho^{-1}(W\cap\cl(M))\subset\bigcup_{i=1}^s X\cap K_i$; hence, 
\begin{equation}\label{union}
W\cap M\subset\bigcup_{i=1}^s\pi(X\cap K_i)\cap M.
\end{equation} 
We know that $\varrho^{-1}(y)$ is a singleton for each $y\in M$; hence, as $K_i\cap K_j=\varnothing$ if $i\neq j$, we deduce
$$
\pi(X\cap K_i)\cap\pi(X\cap K_j)\subset\cl(M)\setminus M\quad\text{ if $i\neq j$.} 
$$
Moreover, each set $\pi(X\cap K_i)\cap M$ is non empty and closed in $M$. Thus, since $M_q$ is semialgebraically connected, we deduce $s=1$ and so $\varrho^{-1}(q)$ is a singleton.

\noindent{\bf S2.b.} Let $(K,\Phi)$ be a semialgebraic triangulation of $X$ compatible with $M'$, $X\setminus M'$, $\varrho^{-1}(V\setminus M)$ and $\varrho^{-1}(Y_0)$ such that $\Phi|_{\sigma^0}:\sigma^0\to S^0:=\Phi(\sigma^0)\subset R^{n+1}$ is a Nash embedding for each $\sigma\in K$. Let ${\mathfrak F}$ be the collection of all simplices $\tau\in K$ such that: 
\begin{itemize}
\item $T^0:=\Phi(\tau^0)\subset\varrho^{-1}(V\setminus(M\cup Y_0))$ and 
\item there exists a point $x\in T^0$ satisfying $\dim(T^0_x)=\dim(M'_x)-1$. 
\end{itemize}
We claim: \em If $\tau\in{\mathfrak F}$, then $\dim(T^0_z)=\dim(M'_z)-1$ and $X_z\setminus M'_z=T^0_z$ for all $z\in T^0$\em.

Indeed, suppose there exists a point $z'\in T^0$ such that $\dim(T^0_{z'})\leq\dim(M'_{z'})-2$. Then there exists a simplex $\sigma'\in K$ such that $z'\in S':=\Phi(\sigma')$ and $\dim(\sigma')=\dim(M'_{z'})$ because $(K,\Phi)$ is compatible with $M'$ and $X$. As $z'\in T^0\cap S'$, we deduce that $\tau$ is a proper face of $\sigma'$ and so
\begin{multline*}
\dim(M'_{z'})=\dim(\sigma')\leq\dim(M'_x)=\dim(T^0_x)+1\\
=\dim(T^0_{z'})+1\leq\dim(M'_{z'})-2+1=\dim(M'_{z'})-1,
\end{multline*}
which is a contradiction. Thus, $\dim(T^0_z)=\dim(M'_z)-1$ for all $z\in T^0$.

Now let $z\in T^0$ and let $\sigma\in K$ be a simplex such that $z\in S:=\Phi(\sigma)$ and $S^0:=\Phi(\sigma^0)\subset M'$. Since $z\in T^0\cap S$, we deduce that $\tau\subset\sigma$ is a (proper) face of $\sigma$ and as $\dim(T^0_z)=\dim(M'_z)-1$ and $T^0\subset X\setminus M'$, we deduce $\dim(S^0_z)=\dim(M'_z)$. Observe that since $z\in T^0$ and $\dim(T^0_z)=\dim(M'_z)-1$, we have $\dim(T^0_z)\leq\dim(X_z\setminus M'_z)\leq\dim(M'_z)-1=\dim(T^0_z)$ and so $\dim(T^0_z)=\dim(X_z\setminus M'_z)$; hence, $T^0_z=X_z\setminus M'_z$ for all $z\in T^0$. 

\noindent{\bf S2.c.} Let ${\mathfrak G}$ be the collection of all simplices $\epsilon\in K$ such that $E^0:=\Phi(\epsilon^0)\subset\varrho^{-1}(V\setminus(M\cup Y_0))$ and define $Y_1:=\bigcup_{\epsilon\in{\mathfrak G}\setminus{\mathfrak F}}\varrho(E^0)$. We claim: \em $\dim(Y_{1,p})\leq\dim(M_p)-2$ for all $p\in\cl(M)$\em. 

Let $p\in\cl(Y_1)$ and suppose that $\dim(Y_{1,p})\geq\dim(M_p)-1$. Pick a point $q\in Y_1$ close to $p$ such that $\dim(Y_{1,q})=\dim(Y_{1,p})$ and $\dim(M_q)\leq\dim(M_p)$. Since $q\in Y_1\subset\cl(M)\setminus M$, we have $Y_{1,q}\subset\cl(M)_q\setminus M_q$; hence, by \cite[2.8.13]{bcr}, $\dim(Y_{1,q})\leq\dim(M_q)-1$. Thus,
$$
\dim(M_q)\leq\dim(M_p)\leq\dim(Y_{1,p})+1=\dim(Y_{1,q})+1\leq\dim(M_q)-1+1=\dim(M_q)
$$
and so $\dim(Y_{1,q})=\dim(M_q)-1$. As $Y_1\subset V\setminus Y_0$, we deduce that $\varrho^{-1}(q)$ is a singleton $\{z\}$. Let $\epsilon_1,\ldots,\epsilon_r$ be all simplices in ${\mathfrak G}\setminus{\mathfrak F}$ such that $z\in E_j=\Phi(\epsilon_j)$. Notice that $\dim(E^0_{j,z})\leq\dim(M'_{z})-2$ for each $j=1,\ldots,r$. Moreover, as$\varrho|_{M'}:M'\to M$ is a semialgebraic homeomorphism and $\varrho(M'_{z})\subset M_q$, we deduce $\dim(M'_{z})=\dim(\varrho(M'_{z}))\leq\dim(M_q)$. Thus,
$$
\dim(\pi(E^0_j)_q)\leq\dim(\pi(E^0_j))\leq\dim(E^0_j)=\dim(E^0_{j,z})\leq\dim(M'_{z})-2\leq\dim(M_q)-2
$$
and since $Y_{1,q}=\bigcup_{j=1}^r\pi(E^0_j)_q$, we deduce $\dim(M_q)-1=\dim(Y_{1,q})\leq\dim(M_q)-2$, which is a contradiction. Thus, $\dim(Y_{1,p})\leq\dim(M_p)-2$ for all $p\in\cl(M)$.

\noindent{\bf S2.d.} Define $Y:=\cl(Y_0\cup Y_1)\setminus M$ that satisfies $\dim(Y_{p})\leq\dim(M_p)-2$ for all $p\in\cl(M)$.

\noindent{\bf Step 3.} \em Construction of the continuous extension $G$ of $g$ to $V\setminus Y$\em. Write $\varrho^{-1}(q)=(q,G(q))$ for all $q\in V\setminus Y$ and observe that $G|_{M}=g$. We claim: \em $G:V\setminus Y\to R$ is continuous and consequently a semialgebraic function\em. 

\noindent{\bf S3.a.} Since $G|_{M_{\lc}}=g|_{M_{\lc}}$ is continuous and $M_{\lc}$ is open in $\cl(M)$, it is enough to check the continuity of $G$ at the points of $V\setminus(Y\cup M_{\lc})$.

\noindent{\bf S3.b.} \em We first check the continuity at the points $q\in V\setminus(Y\cup M)=V\setminus(\cl(Y)\cup M)$\em. 

Choose a point $q\in V\setminus(Y\cup M)$. We begin with the construction of fitting neighborhoods of $q$ in $\cl(M)$ and $\rho^{-1}(q)$ in $X$.

We know: $\varrho^{-1}(q)=\{x\}$ is a singleton and there exists a simplex $\tau\in K$ such that $x\in T^0$ and $T^0_x=X_x\setminus M'_x$; recall also $\dim(T^0_z)=\dim(M'_z)-1$ for all $z\in T^0$. Let $\sigma_1,\ldots,\sigma_m\in K$ be the simplices of $K$ such that $\dim(\sigma_i)=\dim(M'_x)$ and $\tau\subset\sigma_i$. The condition $\dim(T^0_z)=\dim(M'_z)-1$ for all $z\in T^0$ guarantees that $T^0\sqcup\bigsqcup_{j=1}^mS_i^0$ is an open neighborhood of $T^0$ in $X$. Define $C_1:=X\setminus(T^0\sqcup\bigsqcup_{j=1}^mS_i^0)$, which is a closed semialgebraic set and so $\varrho(C_1)$ is also a closed semialgebraic set. Since $\varrho^{-1}(q)=\{x\}$ and $x\not\in C_1$, we deduce $q\in X\setminus\varrho(C_1)$. Let $\ol{U}_1$ be a closed semialgebraic neighborhood of $q$ in $\cl(M)$ contained in $V\setminus(\cl(Y)\cup\varrho(C_1))$ such that $\ol{U}_1\cap M$ is semialgebraically connected (use that the germ $M_q$ is semialgebraically connected). Thus, $\ol{W}:=\varrho^{-1}(\ol{U}_1)$ is a closed semialgebraic neighborhood of $x$ in $X$ contained in $T^0\sqcup\bigsqcup_{j=1}^mS_i^0$. 

Notice that the restriction \em $\varrho|_{\ol{W}}:\ol{W}\to \ol{U}_1$ is a semialgebraic homeomorphism \em because $\ol{W}$ is closed and bounded and $\varrho|_{\ol{W}}$ is a bijective (continuous) semialgebraic map. Let us check now that \em $\ol{W}\setminus T^0$ is semialgebraically connected\em.

Indeed, since $T^0_z=X_z\setminus M'_z$ for each $z\in T^0$, we deduce $(X\setminus M')\cap\ol{W}=T^0\cap\ol{W}$ and so 
$$
\varrho(\ol{W}\setminus T^0)=\varrho(\ol{W}\setminus(X\setminus M'))=\varrho(\ol{W}\cap M')=\ol{U}_1\cap M.
$$
As $\ol{U}_1\cap M$ is semialgebraically connected, so is $\ol{W}\setminus T^0$. Since $S_i^0$ is open, $S_i^0\cap S_j^0=\varnothing$ if $i\neq j$ and $S_i^0\cap\ol{W}=\varnothing$, we deduce $m=1$ and $T^0\cup S_1^0$ is an open neighborhood of $x$ in $X$. As $\varrho|_{\ol{U}_1}^{-1}(z)=(z,G|_{\ol{U}_1}(z))$ and $\ol{U}_1$ is a semialgebraic neighborhood of $q$, we deduce that $G$ is continuous at $q$ because $\varrho|_{\ol{U}_1}^{-1}$ is continuous.

\noindent{\bf S3.c.} To finish let us prove that: \em $G$ is continuous at all points of $M\setminus M_{\lc}$\em. Suppose by contradiction that there exists a point $p\in M\setminus M_{\lc}$ such that $G$ is not continuous at $p$. Then we can find $\veps>0$ such that $p$ is adherent to the semialgebraic set $D:=\{y\in V\setminus Y:\,|G(p)-G(y)|>\veps\}$. By the curve selection lemma there exists a semialgebraic path $\alpha:[0,1]\to R^n$ such that $\alpha(0)=p$ and $\alpha((0,1])\subset D$. On the other hand, since $G|_M=g$ is continuous, there exists $\delta>0$ such that $|G(x)-G(p)|<\frac{\veps}{2}$ for each $x\in M\cap\Bb_n(p,\delta)$. Shrinking the domain of $\alpha$, we may assume that $\im\alpha\subset\Bb_n(p,\delta)$. Since $\im(\alpha)\subset\cl(M)\cap\Bb_n(p,\delta)$, there exists by the curve selection lemma a semialgebraic path $\beta:[0,1]\to R^n$ such that $\beta(0)=\alpha(1)$ and $\beta((0,1])\subset M\cap\Bb_n(p,\delta)$. Thus,
$$
\veps<|G(p)-G(\alpha(1))|=\lim_{t\to0^+}|G(p)-G(\beta(t))|\leq\tfrac{\veps}{2},
$$
which is a contradiction.

Hence, $g\in{\mathcal S}(M,R)$ can be extended to a semialgebraic function $G\in{\mathcal S}(V\setminus Y,R)$, as required.
\qed

The following example shows that the previous result is sharp.

\begin{example}\label{ce}
Consider the semialgebraic set $M:=\{x-y>0,y>0\}\cup\{(0,0,0)\}\subset R^3$ and the bounded semialgebraic function $f:M\setminus\{(0,0,0)\}\to R,\ (x,y,z)\mapsto\frac{x-y}{x}$. Observe that $M$ is appropriately embedded and $g:=zf$ can be extended continuously to the origin. However, $g$ cannot be extended to any neighborhood of the origin in $\cl(M)$ because such extension should value $z$ on the semialgebraic set $\cl(M)\cap\{y=0\}$ and $0$ on the semialgebraic set $\cl(M)\cap\{x-y=0\}$, which contradicts the continuity of $g$.
\end{example}

\section{Appropriate embedding of semialgebraic sets}\label{s4}

In the Introduction we presented the appropriately embedded semialgebraic sets and our aim in this section is to prove that every semialgebraic set $M\subset R^n$ can be appropriately embedded. The proof of this result requires some initial preparation. We denote by $\eta(M)$ the set of points $q\in\cl(M)\setminus M$ such that either $0\leq\dim(\cl(M)_q\setminus M_q)<\dim(M_q)-1$ or $M_q$ is not semialgebraically connected. Obviously, $M$ is appropriately embedded if and only if $\eta(M)=\varnothing$. Moreover, $\eta(M)$ is a semialgebraic set. Only the semialgebraicity of the set of all points $x\in\cl(M)\setminus M$ such that the germ $M_x$ is not semialgebraically connected requires some comment: This holds by \cite[4.2]{fgr} (although this result is stated for the case $R=\R$, the same proof works for an arbitrary real closed field $R$). 

\begin{thm}[Appropriate embedding]\label{se0}
Assume that $M\subset R^n$ is bounded. Then there exists a neighborhood $U$ of $\eta(M)$ such that $N:=M\setminus U$ is appropriately embedded as well as a surjective semialgebraic map $h:\cl(N)\to\cl(M)$ such that $h|_N:N\to M$ is a semialgebraic homeomorphism.
\end{thm}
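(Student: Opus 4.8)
The plan is to realise the excision of $\eta(M)$ combinatorially, via a semialgebraic triangulation, and then to recover $\cl(M)$ by a "collapse" map. Since $M$ is bounded we work with the closed bounded semialgebraic set $X:=\cl(M)$; note $\dim X=\dim M$ (as $M$ is dense in $X$) and that $\eta(M)\subset X\setminus M$ is semialgebraic of dimension $\leq\dim M-1$ by \cite[2.8.13]{bcr}. If $\eta(M)=\varnothing$ there is nothing to do ($U=\varnothing$, $N=M$, $h=\id_X$), so assume $\eta(M)\neq\varnothing$. First I would choose a finite simplicial complex $K$ and a semialgebraic homeomorphism $\Phi:|K|\to X$ with all $\Phi|_{\sigma^0}$ Nash embeddings, compatible with the list $\{M,\,X\setminus M,\,\eta(M),\,\rho(M),\,M_{\lc}\}$ together with the dimension strata $\{q\in X:\dim M_q=d\}$ ($d=0,\dots,\dim M$); I then identify $X=|K|$ and let $L\subset K$ be the subcomplex with $|L|=\cl_X(\eta(M))$.

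Next I would pass to the second barycentric subdivision and take the simplicial (regular) neighbourhood $T$ of $L$: it is a closed semialgebraic neighbourhood of $|L|\supseteq\eta(M)$ with $|L|\subset\Int_X T$, it collapses onto $|L|$ — so there is a semialgebraic retraction $c:T\to|L|$ presenting $T$ as the mapping cylinder of the surjection $c|_B:B\to|L|$, where $B:=\Bd_X T$ — and $B$ is collared both in $T$ and in $X\setminus\Int_X T$. Put $U:=\Int_X T$ and $N:=M\setminus U=M\cap(X\setminus U)$, so that an open semialgebraic neighbourhood of $\eta(M)$ is being removed. To see that $N$ is appropriately embedded, note that $U$ open gives $\cl(N)\subset X\setminus U$, hence $\cl(N)\cap\eta(M)=\varnothing$; thus every $q\in\cl(N)\setminus N$ lies in $(X\setminus M)\setminus\eta(M)$, where $M_q$ is semialgebraically connected and the equality/codimension-one condition holds. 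If $q\notin T$, then $N$ coincides with $M$ near $q$ and the condition is inherited verbatim; the genuine point is $q\in B$, where $N_q=M_q\cap(X\setminus U)_q$ is the closed side of $M_q$ cut along the piecewise-linear hypersurface $B$. Here compatibility of the triangulation makes $(X,M)$ conical at $q$ with $B$ a subcone, so keeping the closed complementary side makes $N$ locally closed at $q$ (whence $\cl(N)_q=N_q$ and the dimension condition is automatic), while the presence of the cone point $q$ in $\cl(N_q)$ together with connectedness of the link keeps $N_q$ semialgebraically connected; the possibility that $|L|$ meets $M$ along $\rho(M)$ is absorbed by having put $\rho(M)$ in the compatibility list. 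Thus $\eta(N)=\varnothing$.

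Finally I would build $h$: the collar of $B$ inside $X\setminus\Int_X T$ lets one absorb an external collar, producing a semialgebraic homeomorphism $X\setminus T\xrightarrow{\ \cong\ }X\setminus|L|$ that extends to a surjective semialgebraic map $h:X\setminus\Int_X T\to X$ with $h|_B=c|_B$ and $h|_{X\setminus T}$ that homeomorphism; restricting $h$ to $\cl(N)\subset X\setminus\Int_X T$ one checks, using that $M$ is dense of full local dimension near $\eta(M)$ so that every collar ray issuing from a point of $|L|$ reaches $B\cap\cl(N)$, that $h(\cl(N))=\cl(M)$ and that $h|_N$ is the required semialgebraic homeomorphism $N\to M$, giving the commutative diagram. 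The main obstacle is exactly this pair of verifications — that excising $U$ creates no new bad points along $B$ and that the reinflation $h$ stays surjective after restriction to $\cl(N)$; both are forced by taking the triangulation fine enough and compatible with every semialgebraic set in sight (so all local structure is conical and everything is a subcomplex), and, where a purely combinatorial argument is not enough, by appealing to the local conic structure of semialgebraic sets and to the weak continuous extension property of Theorem \ref{ext2}. An alternative route is an induction on $\dim\eta(M)$, removing the top-dimensional stratum by a local graph-type modification in the spirit of the examples of \S\ref{s3}.
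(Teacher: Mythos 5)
Your main construction — excising an open regular neighbourhood $U=\Int_X T$ of $|L|=\cl_X(\eta(M))$ in a compatible triangulation and reinflating via a collar map — has two genuine gaps, and both are exactly where the paper invests its technical effort.

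First, the appropriate-embedding check along $B=\Bd_X T$ is not correct as stated. You assert that at $q\in B\cap(\cl(N)\setminus N)$ the set $N$ is locally closed, so that $\cl(N)_q=N_q$. This is false in general: $\cl(N)_q\setminus N_q$ contains $(\cl(M)_q\setminus M_q)\cap(X\setminus U)_q$, and since $q\notin\eta(M)$ the set $\cl(M)_q\setminus M_q$ typically has dimension $\dim(M_q)-1$ and meets the closed side of $B$. For instance, for $M=\{x>0,\,y>0,\,z\neq 0\}$ intersected with a ball, one has $\eta(M)=\{x\geq0,\,y\geq0,\,z=0\}$, and at a point $q\in B$ on $\{y=0,\,z\neq 0\}$ one finds $N_q=\{y>0,\,z\geq\veps\}_q$ with $\cl(N)_q\setminus N_q=\{y=0,\,z\geq\veps\}_q\neq\varnothing$. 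Whether the dimension and connectivity constraints nevertheless hold at every $q\in B$ is the delicate point; the paper makes it rigorous through Lemma \ref{lcd}, which shows $\eta(\sigma^0\setminus\ol{U}_{\tau,\veps})\subset\partial\tau$ and so locates the newly created bad points in a strictly lower-dimensional set, which is then absorbed by an induction on $\dim\eta(M)$. Your ``all-at-once'' excision suppresses this induction but offers no substitute for Lemma \ref{lcd}, and the conical-structure argument you invoke (at a point $q$ in the interior of a simplex of $K''$, not a vertex) is not spelled out.

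Second, and more decisively, the map $h$ you build does not even send $N$ into $M$. Because $U$ is open, $N=M\setminus U$ contains $M\cap B$, and by construction $h|_B=c|_B$ collapses $B$ onto $|L|=\cl_X(\eta(M))$; but $|L|$ meets $M$ at most along $\rho(M)\cap\cl(\eta(M))$, so generically $h(N\cap B)\subset\eta(M)\subset\cl(M)\setminus M$, and $h|_N$ is not a map into $M$, let alone a homeomorphism onto it. Removing the closed $T$ instead of $U$ does not repair this, because the collar-absorption homeomorphism $X\setminus T\to X\setminus|L|$ has no reason to respect $M$: $M$ is a union of open simplices of $K$, not of $K''$, and a PL collar of $B$ in $K''$ need not carry $K$-simplices to $K$-simplices. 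The paper sidesteps both problems simultaneously: it removes the closed tubes $\ol{U}_{\tau_i,\veps/2}\setminus\partial\tau_i$ (so $N$ misses the inner boundary), and it builds the inflating maps $h_i$ radially over $\tau_i^0$ so that, by Lemma \ref{intsimplex}, $h_i(\sigma\cap\ol{W}_i)=\sigma\cap\ol{V}_i$ for every $\sigma\in K$ with $\tau_i\subset\sigma$ — this compatibility is what makes $h|_N:N\to M$ a homeomorphism. The ``alternative route'' you mention in your final sentence — inducting on $\dim\eta(M)$ and removing the top-dimensional stratum by local modifications — is precisely the paper's proof; but to carry it through you need the analogues of Lemmas \ref{lcd} and \ref{intsimplex}, which your sketch does not supply.
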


We begin with the inspiring particular case when $\eta(M)$ is closed whose proof is strongly based on \cite[9.4.1, 9.4.4]{bcr}.

\begin{proof}[Proof of Theorem \em \ref{se0} \em when $\eta(M)$ is closed]
The proof is conducted in several steps.

\noindent{\em Initial preparation.}
Consider the semialgebraic sets $S_1:=\cl(M)\setminus\eta(M)$ and $S_2:=M$, which clearly have the same closure. Let $\ol{\Bb}$ be a closed ball centered at the origin of radius large enough to obtain $\cl(M)\subset\ol{\Bb}$. Consider the semialgebraic function $f:=\dist(\cdot,\eta(S_2))\in{\mathcal S}(\ol{\Bb},R)$ as well as the semialgebraic sets $S_1$ and $S_2$; since $\eta(S_2)$ is closed, we have $Z(f)=\eta(S_2)$. One can prove that: \em There exist $\veps>0$, semialgebraic sets $A_1,A_2\subset f^{-1}(\veps)$ and a semialgebraic map 
$$
\theta:[0,\veps]\times f^{-1}(\veps)\to f^{-1}([0,\veps])
$$ 
such that the restriction
$$
\theta|_{(0,\veps]}:(0,\veps]\times f^{-1}(\veps)\to f^{-1}((0,\veps])
$$
is a semialgebraic homeomorphism, the composition $\pi:=f\circ\theta:(0,\veps]\times f^{-1}(\veps)\to(0,\veps]$ is the projection onto the first factor, $\theta(\veps, x)=x$ for all $x\in f^{-1}(\veps)$ and $\theta((0,\veps]\times A_i)=S_i\cap f^{-1}((0,\veps])$\em. 

To that end, proceed similarly to the proof of \cite[9.4.4]{bcr} but taking care of the semialgebraic sets $S_i$ by using the triangulability of the semialgebraic function $f$ compatible with the semialgebraic sets $S_1$ and $S_2$ \cite[9.4.1]{bcr}. 

Let us identify the semialgebraic sets $A_i$. Given a semialgebraic set $T\subset R^n$ and $\delta>0$, we denote 
\begin{align*}
&T_{[0,\delta]}:=\{x\in T:\,\dist(x,\eta(S_2))\leq\delta\}=T\cap f^{-1}([0,\delta]),\\
&T_{\{\delta\}}:=\{x\in T:\,\dist(x,\eta(S_2))=\delta\}=T\cap f^{-1}(\delta). 
\end{align*}
We write 
$$
\zeta:=\theta|_{(0,\veps]}^{-1}:f^{-1}(0,\veps]\to(0,\veps]\times f^{-1}(\veps),\ x\mapsto(\zeta_1(x),\zeta_2(x)).
$$
Now given $0<\delta\leq\veps$, we have
$$
\{\delta\}\times f^{-1}(\veps)=\pi^{-1}(\delta)=\theta^{-1}(f^{-1}(\delta))\quad\text{and so}\quad f^{-1}(\delta)=\theta(\{\delta\}\times f^{-1}(\veps)).
$$
Thus, $f^{-1}(\veps)$ is semialgebraically homeomorphic to $f^{-1}(\delta)$ and it holds $\zeta_1(f^{-1}(\delta))=\{\delta\}$; in particular, $\zeta_1(x)=\dist(x,\eta(S_2))$ for all $x\in f^{-1}((0,\veps])$. Moreover,
\begin{multline*}
A_i=\theta(\{\veps\}\times A_i)=\theta((\{\veps\}\times f^{-1}(\veps))\cap((0,\veps]\times A_i))\\
=f^{-1}(\veps)\cap S_i\cap f^{-1}((0,\veps])=S_i\cap f^{-1}(\veps)=S_{i,\{\veps\}};
\end{multline*}
hence, $A_i=S_{i,\{\veps\}}$ and $A_1=\cl(A_2)$. So the semialgebraic sets $(0,\veps]\times S_{i,\{\veps\}}$ and $S_{i,[0,\veps]}$ are semialgebaically homeomorphic.

\noindent{\bf Step 1. }{\em Construction of the semialgebraic homeomorphisms.} Consider the semialgebraic maps
$$
\begin{array}{l}
g:S_1\to S_1\setminus S_{1,[0,\frac{\veps}{2}]},\ x\mapsto\begin{cases}
x&\text{ if $x\in S_1\setminus S_{1,[0,\veps]},$}\\
\theta(\frac{\veps}{2}+\frac{\dist(x,\eta(S_2))}{2},\zeta_2(x))&\text{ if $x\in S_{1,[0,\veps]},$}
\end{cases}\\[20pt]
h:\cl(S_1\setminus S_{1,[0,\frac{\veps}{2}]})\to\cl(S_1),\ x\mapsto\begin{cases}
x&\text{if $x\in\cl(S_1\setminus S_{1,[0,\veps]}),$}\\
\theta(2\dist(x,\eta(S_2))-\veps,\zeta_2(x))&\text{if $x\in\cl(S_{1,[0,\veps]}\setminus S_{1,[0,\frac{\veps}{2}]}).$}
\end{cases}
\end{array}
$$
As $\theta(\veps, x)=x$ for all $x\in f^{-1}(\veps)$, the semialgebraic maps $g,h$ are well defined at the `conflictive points' of the set $S_{1,\{\veps\}}$. Notice that $h\circ g=\id_{S_1}$ and $g\circ h|_{S_1\setminus S_{1,[0,\frac{\veps}{2}}]}=\id_{S_1\setminus S_{1,[0,\frac{\veps}{2}}]}$; hence, $S_i$ and $S_i\setminus S_{i,[0,\frac{\veps}{2}]}$ are semialgebraically homeomorphic. Moreover, as $\cl(S_1\setminus S_{1,[0,\frac{\veps}{2}]})$ is bounded and closed, so is $h(\cl(S_1\setminus S_{1,[0,\frac{\veps}{2}]}))$ and since $h(S_1\setminus S_{1,[0,\frac{\veps}{2}]})=S_1$, we conclude that $h$ is surjective.

\begin{figure}[ht]
\centering
\begin{tikzpicture}

\draw (0,4) -- (1.5,2.5) -- (3,2.5) -- (1.5,4) -- (0,4);
\draw[fill=black!20!white] (0,4) -- (1.5,2.5) -- (3,2.5) -- (1.5,4) -- (0,4);

\draw (6.5,2.5) -- (5,4) -- (3.5,4) -- (5,2.5) -- (6.5,2.5);
\draw[fill=black!20!white] (6.5,2.5) -- (5,4) -- (3.5,4) -- (5,2.5) -- (6.5,2.5);

\draw (0,4) -- (0,1.5) -- (1.5,0) -- (1.5,2.5) -- (0,4);
\draw[fill=black!20!white] (0,4) -- (0,1.5) -- (1.5,0) -- (1.5,2.5) -- (0,4);

\draw (1.5,2.5) -- (1.5,0) -- (6.5,0) -- (6.5,2.5) -- (5,2.5) arc (360:180:1cm) -- (1.5,2.5);
\draw[fill=black!20!white] (1.5,2.5) -- (1.5,0) -- (6.5,0) -- (6.5,2.5) -- (5,2.5) arc (360:180:1cm) -- (1.5,2.5);

\draw[dashed] (2,2.5) arc (180:360:2cm) -- (4.5,4) -- (3.5,4) -- (5,2.5) arc (360:180:1cm) -- (1.5,4) -- (0.5,4) -- (2,2.5);
\draw[fill=black!40!white,dashed] (2,2.5) arc (180:360:2cm) -- (4.5,4) -- (3.5,4) -- (5,2.5) arc (360:180:1cm) -- (1.5,4) -- (0.5,4) -- (2,2.5);

\draw (1.5,2.5) -- (1.5,0) -- (6.5,0) -- (6.5,2.5) -- (5,2.5) arc (360:180:1cm) -- (1.5,2.5);

\draw (9,1.5) -- (10.5,0) -- (10.5,2.5) -- (9,4) -- (9,1.5);
\draw[fill=black!20!white] (9,1.5) -- (10.5,0) -- (10.5,2.5) -- (9,4) -- (9,1.5);

\draw (10.5,0) -- (15.5,0) -- (15.5,2.5) -- (10.5,2.5) -- (10.5,0);
\draw[fill=black!20!white] (10.5,0) -- (15.5,0) -- (15.5,2.5) -- (10.5,2.5) -- (10.5,0);

\draw (10.5,2.5) -- (15.5,2.5) -- (14,4) -- (9,4) -- (10.5,2.5);
\draw[fill=black!20!white] (10.5,2.5) -- (15.5,2.5) -- (14,4) -- (9,4) -- (10.5,2.5);

\draw[dashed] (11,2.5) arc (180:360:2cm) -- (13.5,4) -- (9.5,4) -- (11,2.5);
\draw[fill=black!40!white,dashed] (11,2.5) arc (180:360:2cm) -- (13.5,4) -- (9.5,4) -- (11,2.5);
\draw[fill=black!60!white,dashed] (10.5,4) -- (12,2.5) arc (180:360:1cm) -- (14,2.5) -- (12.5,4) -- (10.5,4);

\draw (10.5,2.5) -- (15.5,2.5) -- (14,4) -- (9,4) -- (10.5,2.5);

\draw (3,2.5) arc (180:360:1cm);

\draw (2.5,3) arc (270:360:1cm);

\draw[fill=black!70!white] (2.5,3) -- (3,2.5) arc (180:360:1cm) -- (5,2.5) -- (3.5,4) arc (360:270:1cm);

\draw[ultra thick] (2.5,4) -- (4,2.5);
\draw[ultra thick] (11.5,4) -- (13,2.5);

\draw[->,thick] (6.5,3.5) -- (8.5,3.5);

\draw (7.5,3.8) node{$h$};

\draw (4,2.5) node{$\bullet$};
\draw (2.5,4) node{$\bullet$};

\draw (13,2.5) node{$\bullet$};
\draw (11.5,4) node{$\bullet$};

\draw (4,1.175) node{$S_{1,[0,\veps]}\setminus S_{1,[0,\frac{\veps}{2}]}$};
\draw (2.5,0.25) node{$S_1\setminus S_{1,[0,\veps]}$};

\draw (13,1) node{$S_{1,[0,\veps]}$};
\draw (11.5,0.25) node{$S_1\setminus S_{1,[0,\veps]}$};

\draw (4,2.5) -- (5.55,1.24);

\draw (5.55,1.24) node{\scriptsize$\displaystyle\bullet$};
\draw (5,1.7) node{\scriptsize$\displaystyle\bullet$};
\draw (5.395,1.75) node{$x$};

\draw (13,2.5) -- (14.55,1.24);

\draw (14.55,1.24) node{\scriptsize$\displaystyle\bullet$};
\draw (13.4,2.175) node{\scriptsize$\displaystyle\bullet$};
\draw (14,2.2) node{$h(x)$};
\draw (12,4.35) node{$\eta(S_1)$};
\draw (3,4.35) node{$\eta(S_1)$};

\end{tikzpicture}
\caption{Action of the semialgebraic map $h:\cl(S_1\setminus S_{1,[0,\frac{\veps}{2}]})\to S_1$.}
\end{figure}

Write $N:=S_2\setminus S_{2,[0,\frac{\veps}{2}]}=M\setminus f^{-1}([0,\frac{\veps}{2}])$. Consider the open cover of $\cl(N)$
$$
\cl(N)=(\cl(N)\setminus f^{-1}([0,\tfrac{\veps}{2}]))\cup(\cl(N)\cap f^{-1}([0,\veps))))
$$ 

\noindent{\bf Step 2. }{\em For each point $q\in\cl(N)\cap f^{-1}([0,\veps))$, the germ $N_{q}$ is semialgebraically connected and either $\cl(N)_q=N_q$ or $\dim(\cl(N)_q\setminus N_q)=\dim(N_q)-1$.}

Indeed, fix $q\in\cl(N)\cap f^{-1}([0,\veps))$ and observe that $\frac{\veps}{2}\leq\dist(q,\eta(S_2))<\veps$. Clearly, the semialgebraic set $N_{[0,\veps]}=S_{2,[0,\veps]}\setminus S_{2,[0,\frac{\veps}{2}]}$ satisfies $N_{q}=N_{[0,\veps],q}$ and we obtain:
$$
\zeta|_{\cl(N_{[0,\veps]})}:\cl(N_{[0,\veps]})\to[\tfrac{\veps}{2},\veps]\times A_1,\ x\mapsto(\dist(x,\eta(S_2)),\zeta_2(x))
$$
is a semialgebraic homeomorphism; furthermore, $\zeta(N_{[0,\veps]})=(\frac{\veps}{2},\veps]\times A_2$. Let $U$ be a neighborhood of $q$ in $\cl(N_{[0,\veps]})$. Then there exists a semialgebraically connected open semialgebraic subset $V_1\subset A_1$ and $\frac{\veps}{2}\leq\delta_1<\delta_2<\veps$ such that $\theta([\delta_1,\delta_2)\times V_1)\subset U$ is a neighborhood of $q$ in $\cl(N_{[0,\veps]})$. Moreover, since for each $p\in\cl(S_2)\setminus\eta(S_2)$ the germ $S_{2,p}$ is semialgebraically connected, we may assume that $V_2:=V_1\cap A_2$ is also semialgebraically connected and since for each $p\in\cl(S_2)\setminus\eta(S_2)$ either $\cl(S_2)_p=S_{2,p}$ or $\dim(\cl(S_2)_p\setminus S_{2,p})=\dim(S_{2,p})-1$, we deduce that either $A_{1,z}=A_{2,z}$ or $\dim(A_{2,z}\setminus A_{1,z})=\dim(A_{1,z})-1$ for each $z\in A_2\setminus A_1$; hence, we may assume that either $V_1=V_2$ or $\dim(V_1\setminus V_2)=\dim(V_1)-1$. Observe that $\dim(V_i)=\dim(N_{q})-1$.

Moreover, $W_1:=\theta([\delta_1,\delta_2)\times V_1)$ is a semialgebraic neighborhood of $q$ in $\cl(N_{[0,\veps]})$ and 
$$
N\cap W_1=N_{[0,\veps]}\cap W_1=\begin{cases}
\theta([\delta_1,\delta_2)\times V_2)&\text{if $\delta_1>\frac{\veps}{2}$},\\
\theta((\frac{\veps}{2},\delta_2)\times V_2)&\text{if $\delta_1=\frac{\veps}{2}$}
\end{cases}
$$ 
because $\theta(\{\frac{\veps}{2}\}\times A_2)=S_{2,\{\frac{\veps}{2}\}}$. As $\theta((\delta_1,\delta_2)\times V_2)$ is semialgebraically connected, we deduce that $N_{q}$ is also semialgebraically connected. Moreover, we get 
\begin{multline*}
(\cl(N)\setminus N)\cap W_1=(\cl(N_{[0,\veps]})\setminus N_{[0,\veps]})\cap W_1=W_1\setminus(N_{[0,\veps]}\cap W_1)\\
=\begin{cases}
\theta([\delta_1,\delta_2)\times(V_1\setminus V_2))&\text{if $\frac{\veps}{2}<\delta_1$},\\
\theta(\{\frac{\veps}{2}\}\times V_1)\cup\theta([\delta_1,\delta_2)\times(V_1\setminus V_2))&\text{if $\frac{\veps}{2}=\delta_1$},
\end{cases}
\end{multline*}
which is either empty or has dimension $\dim(N_{q})-1$.

\noindent{\bf Step 3. }{\em For each $q\in\cl(N)\setminus f^{-1}([0,\frac{\veps}{2}])$ the germ $N_{q}$ is semialgebraically connected and either $\cl(N)_q=N_q$ or $\dim(\cl(N)_q\setminus N_q)=\dim(N_q)-1$.} 

Indeed, 
$$
q\in\cl(N)\setminus f^{-1}([0,\tfrac{\veps}{2}])\subset\cl(S_2)\setminus f^{-1}([0,\tfrac{\veps}{2}])\subset\cl(S_2)\setminus\eta(S_2);
$$ 
hence, $S_{2,q}=(M\setminus f^{-1}([0,\frac{\veps}{2}]))_q=N_{q}$ and so $\cl(S_2)_q=\cl(N_2)_q$. But since $S_{2,q}$ is semialgebraically connected if $q\in\cl(S_2)\setminus\eta(S_2)$, the same happens to $N_{q}$. Again $q\in\cl(S_2)\setminus\eta(S_2)$ implies that  either $\cl(S_2)_q=S_{2,q}$ (and so $\cl(N)_q=N_{q}$) or $\dim(\cl(S_2)_q\setminus S_{2,q})=\dim(S_{2,q})-1$ (and so $\dim(\cl(N)_q\setminus N_{q})=\dim(N_{q})-1$), as required. 
\end{proof}

\subsection{Basics on tubular neighborhoods for open simplices}

The proof of Theorem \ref{se0} in the general case when $\eta(M)$ is not necessarily closed is harder and requires the use of a suitable triangulation of $M$ as well as fitting tubular neighborhoods of some of its open simplices. To construct them, we present some preliminary results.

\begin{lemdef}\label{border}
Let $\tau\subset R^d$ be a $d$-dimensional simplex. Denote the faces of $\tau$ of dimension $d-1$ with $\vartheta_1,\ldots,\vartheta_{d+1}$. Then there exists a unique point $p_\tau\in\tau$ such that $\dist(p,\vartheta_1)=\dist(p,\vartheta_i)$ for $i=1,\ldots,d+1$. Moreover, $d(p_\tau,\partial\tau)=\max_{x\in\tau}\{d(x,\partial\tau)\}$ and this maximum is just attained at $p_\tau$\em. We call $p_\tau$ the \em incenter \em of $\tau$. 
\end{lemdef}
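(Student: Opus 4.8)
The plan is to reduce everything to the barycentric coordinate functions of $\tau$. Write $\tau=\operatorname{conv}(w_1,\dots,w_{d+1})$ with the $w_i$ affinely independent, indexed so that $\vartheta_i$ is the facet opposite $w_i$, and let $\lambda_i\colon R^d\to R$ be the $i$-th barycentric coordinate, i.e.\ the affine function with $\lambda_i(w_j)=\delta_{ij}$; thus $\sum_i\lambda_i\equiv 1$, $\tau=\{x:\lambda_i(x)\geq 0\ \forall i\}$ and $H_i:=\operatorname{aff}(\vartheta_i)=\{\lambda_i=0\}$. Since $\lambda_i\geq 0$ on $\tau$ and vanishes exactly on $H_i$, setting $h_i:=\dist(w_i,H_i)>0$ one checks $|\nabla\lambda_i|=h_i^{-1}$, so $\ell_i:=h_i\lambda_i$ has unit gradient and $\ell_i(x)=\dist(x,H_i)$ for every $x$ on the side of $H_i$ containing $\tau$, in particular for $x\in\tau$. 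The relation $\sum_i\lambda_i\equiv 1$ becomes $\sum_i h_i^{-1}\ell_i\equiv 1$, with all coefficients $h_i^{-1}$ positive. Moreover the $\lambda_i$ separate points of $R^d$ (one recovers $x=\sum_i\lambda_i(x)w_i$), so the affine map $\Phi\colon R^d\to R^{d+1},\ x\mapsto(\ell_1(x),\dots,\ell_{d+1}(x))$ is injective and its image is exactly the hyperplane $H:=\{t\in R^{d+1}:\sum_i h_i^{-1}t_i=1\}$.

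Next I would prove the key formula $\dist(x,\partial\tau)=\min_i\ell_i(x)$ for $x\in\tau$. For $x\in\Int\tau$ a standard convexity argument (using that $\tau$ is closed and convex and $x$ is interior) gives $\dist(x,\partial\tau)=\sup\{r\geq 0:\Bb_d(x,r)\subseteq\tau\}$, and since $\Bb_d(x,r)\subseteq\{\ell_i\geq 0\}$ if and only if $r\leq\ell_i(x)$, this supremum equals $\min_i\ell_i(x)$; for $x\in\partial\tau$ both sides vanish. Put $\rho:=\big(\sum_i h_i^{-1}\big)^{-1}>0$. From $\sum_i h_i^{-1}\ell_i(x)=1$ and positivity of the weights we get $\min_i\ell_i(x)\leq\rho$ for all $x\in\tau$, with equality if and only if $\ell_1(x)=\dots=\ell_{d+1}(x)=\rho$. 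As $(\rho,\dots,\rho)\in H$, there is a unique $p_\tau\in R^d$ with $\ell_i(p_\tau)=\rho$ for every $i$, and $\rho>0$ forces $p_\tau\in\Int\tau$. This already gives the \emph{Moreover} part: $\dist(x,\partial\tau)=\min_i\ell_i(x)\leq\rho=\dist(p_\tau,\partial\tau)$ for all $x\in\tau$, with equality only at $p_\tau$.

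It remains to replace the hyperplanes $H_i$ by the facets $\vartheta_i$. Since $\dist(p_\tau,\partial\tau)=\rho$, the closed ball $\ol{\Bb}_d(p_\tau,\rho)$ is contained in $\tau$. The foot of the perpendicular from $p_\tau$ to $H_i$ is $q_i:=p_\tau-\rho\nabla\ell_i$, and because $\ell_i$ is affine with $|\nabla\ell_i|=1$ we have $\ell_i(q_i)=\ell_i(p_\tau)-\rho=0$ and $|q_i-p_\tau|=\rho$; hence $q_i\in\ol{\Bb}_d(p_\tau,\rho)\cap H_i\subseteq\tau\cap H_i=\vartheta_i$. Therefore $\dist(p_\tau,\vartheta_i)\leq|q_i-p_\tau|=\rho$, while $\vartheta_i\subseteq H_i$ gives $\dist(p_\tau,\vartheta_i)\geq\dist(p_\tau,H_i)=\rho$, so $\dist(p_\tau,\vartheta_i)=\rho$ for all $i$ and in particular all these distances coincide. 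For uniqueness, let $p\in\tau$ satisfy $\dist(p,\vartheta_1)=\dots=\dist(p,\vartheta_{d+1})=:t$. The $H_i$ have empty common intersection (again because $\sum_i\lambda_i\equiv 1$), hence so do the $\vartheta_i$, so $p\notin\partial\tau$ and $t>0$. From $\ell_i(p)=\dist(p,H_i)\leq\dist(p,\vartheta_i)=t$ together with $t=\dist(p,\partial\tau)=\min_i\ell_i(p)\leq\rho$ we obtain $\ell_i(p)\leq\rho$ for all $i$, and since $\sum_i h_i^{-1}\ell_i(p)=1=\sum_i h_i^{-1}\rho$ with positive weights, necessarily $\ell_i(p)=\rho$ for every $i$; by injectivity of $\Phi$, $p=p_\tau$.

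The main obstacle is exactly this last passage: $\dist(\cdot,\vartheta_i)$ and $\dist(\cdot,H_i)$ genuinely differ at general points of $\tau$ (for instance near a vertex of a very obtuse simplex, where the orthogonal projection onto the hyperplane of the opposite facet falls outside that facet), so one cannot argue with the hyperplanes throughout. The point is that the \emph{minimum} over $i$ still coincides, thanks to the largest-inscribed-ball formula, and that at the incenter itself each perpendicular foot lands inside the corresponding facet because it lies on the insphere $\ol{\Bb}_d(p_\tau,\rho)\subseteq\tau$.
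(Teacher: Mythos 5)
Your proof is correct, and it is essentially the paper's own approach carried out more carefully on the one delicate point you single out at the end. Both arguments work in the affine (barycentric) coordinate frame --- your $\lambda_i$, $h_i^{-1}$, $\ell_i$ correspond to the paper's $f_i$, $\|u_i\|$, $f_i/\|u_i\|$ --- and both exploit the relation $\sum_i h_i^{-1}\ell_i\equiv1$ to locate the incenter and prove maximality. The difference is that the paper's proof simply asserts $\dist(x,\vartheta_i)=\langle u_i,x\rangle/\|u_i\|$ for \emph{every} $x\in\tau$, i.e.\ that the distance to a facet equals the distance to its affine hull throughout the simplex; as you correctly observe, this identity fails in general (for a long thin triangle the foot of the perpendicular from one vertex to the opposite edge's line lies outside that edge), so the relation $\sum_i\|u_i\|\dist(x,\vartheta_i)=1$ invoked in the paper's second step is, strictly speaking, false for general $x\in\tau$, even though the conclusion is of course true. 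Your proof uses only what is actually valid: the inscribed-ball formula $\dist(x,\partial\tau)=\min_i\ell_i(x)$, which is insensitive to whether each individual perpendicular foot lands in the corresponding facet; the insphere argument showing that at $p_\tau$ the feet do land in the $\vartheta_i$, so that $\dist(p_\tau,\vartheta_i)=\rho$; and the one-sided inequality $\ell_i\le\dist(\cdot,\vartheta_i)$ combined with the weighted-sum constraint to get uniqueness. This is a genuine and correct repair of the paper's argument, at the cost of a slightly longer write-up.
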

\begin{proof}
Let $p_1,\ldots,p_d,p_{d+1}$ be the vertices of $\tau$; we may assume $p_{d+1}=0$. Consider a linear change of coordinates $f:=(f_1,\ldots,f_d):R^d\to R^d$ that transforms $p_i$ into the point $(0,\ldots,0,1^{(i)},0,\ldots,0)$ for $i=1,\ldots,d$ (and $p_{d+1}=0$ into $0$). Then 
$$
f(\tau)=\{x_1\geq0,\ldots,x_d\geq0,1-\sum_{i=1}^dx_i\geq0\}\ \text{ and so }\ \tau=\Big\{f_1\geq0,\ldots,f_d\geq0,1-\sum_{i=1}^df_i\geq0\Big\}.
$$
Write $f_i(x):=\qq{u_i,x}$ for linearly independent $u_1,\ldots,u_d\in R^d$. We may assume 
$$
\vartheta_i=\tau\cap\{\qq{u_i,x}=0\}\ \forall i=1,\ldots,d\quad\text{ and }\quad\vartheta_{d+1}=\tau\cap\{1-\qq{u_1+\cdots+u_d,x}=0\}. 
$$
Observe that for each $x\in\tau$
$$
\dist(x,\vartheta_i)=\frac{\qq{u_i,x}}{\|u_i\|}\ \forall i=1,\ldots,d\quad \text{and}\quad \dist(x,\vartheta_{d+1})=\frac{1-\qq{u_1+\cdots+u_d,x}}{\|u_1+\cdots+u_d\|}.
$$
Consider now the system of linear equations
$$
\left\{\begin{array}{r}
\frac{\qq{u_1,x}}{\|u_1\|}-x_{n+1}=0,\\
\vdots\hspace{1.5cm}\\
\frac{\qq{u_d,x}}{\|u_d\|}-x_{n+1}=0,\\
\frac{1-\qq{u_1+\cdots+u_d,x}}{\|u_1+\cdots+u_d\|}-x_{n+1}=0,
\end{array}\right.\ \leadsto\ 
\left\{\begin{array}{r}
\qq{u_1,x}-\|u_1\|x_{n+1}=0,\\
\vdots\hspace{2cm}\\
\qq{u_d,x}-\|u_d\|x_{n+1}=0,\\
(\|u_1+\cdots+u_d\|+\sum_{i=1}^d\|u_i\|)x_{n+1}=1.
\end{array}\right.
$$
The unique solution of the previous linear system is $p_\tau$.

We now prove the second part. Denote $u_{d+1}:=u_1+\cdots+u_d$ and observe 
\begin{equation}\label{suma}
\sum_{i=1}^{d+1}\|u_i\|d(x,\vartheta_i)=1 
\end{equation}
for each $x\in\tau$. Notice that $d(x,\partial\tau)=\min_{i=1,\ldots,d+1}\{d(x,\vartheta_i)\}$ and so 
$$
d(p_\tau,\partial\tau)=d(p_\tau,\vartheta_i)=\frac{1}{\sum_{i=1}^{d+1}\|u_i\|}=:\lambda\quad\text{ for $i=1,\ldots,d+1$.}
$$
Assume that $x\in\tau$ satisfies $d(x,\partial\tau)\geq d(p_\tau,\partial\tau)$. Then 
$$
\min_{i=1,\ldots,d+1}\{d(x,\vartheta_i)\}\geq d(p_\tau,\vartheta_i)\quad\text{ for $i=1,\ldots,d+1$}
$$
and so $d(x,\vartheta_i)\geq d(p_\tau,\vartheta_i)=\lambda$ for $i=1,\ldots,d+1$. Using equation \eqref{suma}, we deduce
$$
d(x,\vartheta_i)=\lambda=d(p_\tau,\vartheta_i)\quad\text{ for $i=1,\ldots,d+1$}
$$
and this implies $x=p_\tau$, as required.
\end{proof}

\subsubsection{Construction of tubular neighborhoods}\label{ctn01}
Let $\tau\subset R^d$ be a $d$-dimensional simplex with incenter $p_\tau$ and $0<\veps<1$. We denote the simplex obtained as the cone of base $\tau\times\{0\}$ and vertex $(p_\tau,\veps^*\dist(p_\tau,\partial\tau))$ where $\veps^*:=\frac{\veps}{\sqrt{1-\veps^2}}$ by $\widehat{\tau}_\veps$. Let $m\geq1$, denote $n:=d+m$ and let $\pi:R^n\equiv R^d\times R^{n-d}\to R^n,\ (x,y)\mapsto (x,0)$ be the projection onto the first factor. For each $0<\veps<1$ consider the semialgebraic sets
$$
\begin{array}{l}
U_{\tau,\veps}:=\{(x,y)\in R^n:\,\dist((x,y),\tau\times\{0\})<\veps\dist((x,y),\partial\tau\times\{0\})\},\\[4pt]
\ol{U}_{\tau,\veps}:=\{(x,y)\in R^n:\,\dist((x,y),\tau\times\{0\})\leq\veps\dist((x,y),\partial\tau\times\{0\})\}.
\end{array}
$$
We prove next that $(U_{\tau,\veps},\pi)$ is a tubular neighborhood of $\tau^0$ for each $0<\veps<1$.

\begin{lem}\label{simplex}
The semialgebraic set $\ol{U}_{\tau,\veps}$ equals $\{(x,y)\in R^n:\,(x,\|y\|)\in\widehat{\tau}_\veps\}$ and $(U_{\tau,\veps},\pi)$ is a tubular neighborhood of $\tau^0$.
\end{lem}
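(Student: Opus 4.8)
The plan is to deduce both assertions from one explicit description of the simplex $\widehat\tau_\veps$ by linear inequalities, after which everything reduces to elementary distance computations in $R^n=R^d\times R^{n-d}$. I would start by recording the orthogonal--splitting formula: for every $A\subset R^d$ and every $(x,y)\in R^d\times R^{n-d}$ one has $\dist((x,y),A\times\{0\})^2=\dist(x,A)^2+\|y\|^2$, since the $R^{n-d}$--component contributes $\|y\|^2$ to the distance to every point of $A\times\{0\}$. Taking $A=\tau$ and $A=\partial\tau$ rewrites membership in $\ol{U}_{\tau,\veps}$ (resp. $U_{\tau,\veps}$) as $\dist(x,\tau)^2+\|y\|^2\le\veps^2(\dist(x,\partial\tau)^2+\|y\|^2)$ (resp. with strict inequality).

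The core step --- and the one I expect to be the main obstacle --- is to describe $\widehat\tau_\veps$ as an intersection of $d+2$ half--spaces using the notation and the incenter $p_\tau$ of Lemma \ref{border}. Write $\delta_i$ for the affine function on $R^d$ with $\delta_i|_\tau=\dist(\,\cdot\,,\vartheta_i)$, so that $\delta_i(p_j)=0$ for $j\neq i$, $\delta_i(p_i)>0$ (as $\tau$ is nondegenerate) and $\delta_i(p_\tau)=\lambda:=\dist(p_\tau,\partial\tau)$ for all $i$; let $h_0:=\veps^*\lambda$ be the height of the apex $v=(p_\tau,h_0)$. The $(d+1)$--simplex $\widehat\tau_\veps\subset R^d\times R$ has facets $\tau\times\{0\}$ (opposite $v$) and, for $i=1,\dots,d+1$, the convex hull of $\{p_j:j\neq i\}\cup\{v\}$ (opposite $p_i$). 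The affine function $h$ vanishes on the first facet and is positive at $v$, while $g_i(x,h):=\delta_i(x)-h/\veps^*$ vanishes at every $p_j$ with $j\neq i$ and at $v$ (since $\delta_i(p_\tau)-h_0/\veps^*=\lambda-\lambda=0$) and satisfies $g_i(p_i)=\delta_i(p_i)>0$; hence these are exactly the supporting half--spaces of the $d+2$ facets, and
$$\widehat\tau_\veps=\{(x,h):h\ge0,\ \delta_i(x)\ge h/\veps^*\ (i=1,\dots,d+1)\}=\{(x,h):x\in\tau,\ 0\le h\le\veps^*\dist(x,\partial\tau)\},$$
the last equality because $\dist(x,\partial\tau)=\min_i\delta_i(x)$ for $x\in\tau$ by Lemma \ref{border}.

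With this in hand the first assertion is a short case split. If $x\notin\tau$, the nearest point of $\tau$ to $x$ lies on $\partial\tau$, so $\dist(x,\tau)=\dist(x,\partial\tau)>0$, and since $\veps<1$ neither $(x,y)\in\ol{U}_{\tau,\veps}$ nor $(x,\|y\|)\in\widehat\tau_\veps$ holds; if $x\in\tau$ then $\dist(x,\tau)=0$ and, using $\veps^{*2}=\veps^2/(1-\veps^2)$, the inequality defining $\ol{U}_{\tau,\veps}$ is equivalent to $\|y\|\le\veps^*\dist(x,\partial\tau)$, i.e. to $(x,\|y\|)\in\widehat\tau_\veps$. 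This proves $\ol{U}_{\tau,\veps}=\{(x,y):(x,\|y\|)\in\widehat\tau_\veps\}$.

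For the tubular neighborhood claim, the identical computation with strict inequalities yields $U_{\tau,\veps}=\{(x,y):x\in\tau^0,\ \|y\|<\veps^*\dist(x,\partial\tau)\}$, an open semialgebraic set containing $\tau^0\equiv\tau^0\times\{0\}$ (because $\dist(x,\partial\tau)>0$ on $\tau^0$), on which $\pi$ restricts to the surjection $(x,y)\mapsto(x,0)$ onto $\tau^0$. I would then exhibit the standard tubular--neighborhood data: the semialgebraic homeomorphism $\tau^0\times\Bb_{n-d}(0,1)\to U_{\tau,\veps}$, $(x,w)\mapsto(x,\veps^*\dist(x,\partial\tau)\,w)$, which commutes with the projections to $\tau^0$ and has inverse $(x,y)\mapsto(x,y/(\veps^*\dist(x,\partial\tau)))$, together with the semialgebraic strong deformation retraction $(x,y,t)\mapsto(x,(1-t)y)$ of $U_{\tau,\veps}$ onto $\tau^0$ along the fibres of $\pi$. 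Everything here is routine once the facet description of $\widehat\tau_\veps$ in the second step is established.
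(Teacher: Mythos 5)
Your proof is correct and establishes the key intermediate identity by a genuinely different route than the paper's. Both arguments reduce the first assertion, via the orthogonal-splitting formula $\dist((x,y),A\times\{0\})^2=\dist(x,A)^2+\|y\|^2$, to the identity
$$\widehat\tau_\veps=\{(x,h)\in\tau\times R:\ 0\le h\le\veps^*\dist(x,\partial\tau)\}.$$
The paper calls the right-hand set $S$, writes it as an intersection of $d+2$ half-spaces, notes $\widehat\tau_\veps\subset S$ since $S$ is convex and contains the base $\tau\times\{0\}$ and the apex, and then argues that a bounded $(d+1)$-dimensional polyhedron cut out by $d+2$ inequalities must be a simplex with exactly the $d+2$ vertices of $\widehat\tau_\veps$, forcing equality. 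You go the other way: you exhibit the $d+2$ supporting affine functionals of the facets of $\widehat\tau_\veps$ directly (the height coordinate, and $g_i=\delta_i-h/\veps^*$ opposite the vertex $p_i\times\{0\}$), verify them by evaluation at the $d+2$ vertices, and invoke the barycentric description of a simplex as the intersection of its facet half-spaces. This sidesteps both the vertex-counting step and the boundedness check for $S$, and is arguably cleaner. You also make the tubular-neighborhood assertion explicit via a semialgebraic trivialization $\tau^0\times\Bb_{n-d}(0,1)\to U_{\tau,\veps}$ and a fiberwise retraction, whereas the paper stops once $S=\widehat\tau_\veps$ is established. One shared caveat inherited from Lemma~\ref{border}: for obtuse simplices the function $\dist(\cdot,\vartheta_i)|_\tau$ is \emph{not} affine (the perpendicular foot onto the hyperplane of $\vartheta_i$ can land outside the facet), so your definition of $\delta_i$ as ``the affine function with $\delta_i|_\tau=\dist(\cdot,\vartheta_i)$'' is a slight abuse; the correct choice is $\delta_i:=f_i/\|u_i\|=\dist(\cdot,H_i)$, which still satisfies all the properties you actually use ($\delta_i(p_j)=0$ for $j\neq i$, $\delta_i(p_i)>0$, $\delta_i(p_\tau)=\lambda$, and $\dist(\cdot,\partial\tau)=\min_i\delta_i$ on $\tau$). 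The paper makes the same identification, so this is not a point of divergence between your argument and theirs.
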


\begin{figure}[ht]
\centering
\begin{tikzpicture}

\draw [fill=black!50!white, very thick](0,1.5) -- (8.25,3) -- (5.25,1.5) -- (0,1.5);

\draw [fill=black!60,fill opacity=0.2](0,1.5) -- (4.5,0) -- (5.25,1.5) -- (0,1.5);
\draw [fill=black!60,fill opacity=0.2](0,1.5) -- (5.25,1.5) -- (4.5,3.75) -- (0,1.5);
\draw [fill=black!60,fill opacity=0.2](4.5,0) -- (8.25,3) -- (5.25,1.5) -- (4.5,0);
\draw [fill=black!60,fill opacity=0.2](4.5,3.75) -- (8.25,3) -- (5.25,1.5) -- (4.5,3.75);

\draw[dashed] (4.5,3.75) -- (4.5,0);
\draw (4.5,1.875) node{\scriptsize$\displaystyle\bullet$};
\draw (0,1.5) node{\scriptsize$\displaystyle\bullet$};
\draw (8.25,3) node{\scriptsize$\displaystyle\bullet$};
\draw (5.25,1.5) node{\scriptsize$\displaystyle\bullet$};
\draw (11,1) node{\scriptsize$\displaystyle\bullet$};
\draw (15,3) node{\scriptsize$\displaystyle\bullet$};

\draw (6,2.2) node{$\tau$};
\draw (4.15,1.875) node{$p_\tau$};

\draw (12.1,1.8) node{$\tau$};
\draw (13.35,1.875) node{$p_\tau$};

\draw (1.5,0.5) node{$U_{\tau,\veps}$};
\draw (14.5,0.5) node{$U_{\tau,\veps}$};

\draw (0.6,3.25) node{$\tau\subset R^3$};

\draw (0.8,2.75) node{$\dim(\tau)=2$};

\draw (10.1,3.25) node{$\tau\subset R^3$};

\draw (10.3,2.75) node{$\dim(\tau)=1$};

\draw [fill=black!30,fill opacity=0.2,dashed] (13,0) to [out=180,in=180] (13,4) to [out=180,in=60] (12.1,3.346) -- (11,1) -- (12.6,0.10) [out=152,in=0] (13,0);

\draw [fill=black!30,fill opacity=0.2,draw=none] (13.86,0.58) -- (15,3) -- (13.5,3.845) to [out=148,in=0] (13,4) to [out=180,in=180] (13,0) to [out=0,in=-121.5] (13.86,0.58);

\draw [fill=black!60,fill opacity=0.2] (13,4) to [out=0,in=0] (13,0);
\draw [fill=black!60,fill opacity=0.2,dashed] (13,0) to [out=180,in=180] (13,4);

\draw (13,4) to [out=180,in=60] (12.1,3.346);
\draw (13,0) to [out=180,in=-28] (12.6,0.10);

\draw [ultra thick] (11,1) -- (13,2);
\draw [ultra thick,dashed] (13,2) -- (15,3);
\draw [ultra thick] (14.1,2.55) -- (15,3);

\draw (12.1,3.346) -- (11,1);
\draw (12.6,0.10) -- (11,1);
\draw (13.5,3.845) -- (15,3);
\draw (13.86,0.58) -- (15,3);
\draw (12.1,3.346) -- (13,2);

\draw (13,2) node{\scriptsize$\displaystyle\bullet$};

\draw (4.5,4) node{\scriptsize$\displaystyle(p_\tau,\veps^*\dist(p_\tau,\partial\tau))$};
\draw (4.5,-0.25) node{\scriptsize$\displaystyle(p_\tau,-\veps^*\dist(p_\tau,\partial\tau))$};
\draw (13.15,3.30) node{\tiny$\displaystyle\veps^*\dist(p_\tau,\partial\tau)$};
\draw[thick,dashed] (9,0) -- (9,4.25); 
\end{tikzpicture}
\caption{Examples of the semialgebraic set $\ol{U}_{\tau,\veps}$.}
\end{figure}
\begin{proof}
Observe first that $\dist((x,y),\tau\times\{0\})\leq\dist((x,y),\partial\tau\times\{0\})$ for all $(x,y)\in R^n$ and
$$
\{(x,y)\in R^d\times R^m:\,\dist((x,y),\tau\times\{0\})<\dist((x,y),\partial\tau\times\{0\})\}=\tau^0\times R^m.
$$
The last equality holds because the distance of a point $p$ to the simplex $\tau\times\{0\}$ equals the distance of $p$ to the affine subspace $W$ generated by one of its faces $\sigma\times\{0\}$. Even more there exists a point $q\in\sigma^0\times\{0\}$ such that 
$$
\dist(p,\tau\times\{0\})=\dist(p,\sigma^0\times\{0\})=\dist(p,W)=\dist(p,q).
$$

A straightforward computation shows that for each $p:=(x,y)\in\tau\times R^m$
$$
\dist((x,y),\partial\tau\times\{0\})^2=\dist(x,\partial\tau)^2+\|y\|^2\quad \text{and}\quad \dist((x,y),\tau\times\{0\})=\|y\|.
$$
Thus, as $0<\veps<1$,
\begin{multline*}
\ol{U}_{\tau,\veps}=\{(x,y)\in\tau\times R^m:\,\|y\|^2\leq\veps^2(\dist(x,\partial\tau)^2+\|y\|^2)\}=\\
\{(x,y)\in\tau\times R^m:\,\|y\|\leq\veps^*\dist(x,\partial\tau)\}.
\end{multline*}

Consider the semialgebraic set
\begin{equation}\label{s}
S:=\{(x,t)\in\tau\times R:\,0\leq t\leq\veps^*\dist(x,\partial\tau)\}.
\end{equation}
For our purposes it is enough that $S=\widehat{\tau}_\veps$. Of course, it holds $\dist(x,\partial\tau)=\min\{\dist(x,\vartheta_i):\,i=1,\ldots,d+1\}$ where $\vartheta_1,\ldots,\vartheta_{d+1}$ are the faces of dimension $d-1$ of $\tau$.

As we have seen in the proof of Lemma \ref{border} there exist independent vectors $u_1,\ldots,u_d\in R^d$ such that $\tau=\{f_1:=\qq{u_1,x}\geq0,\ldots,f_d:=\qq{u_d,x}\geq0,f_{d+1}:=1-\sum_{i=1}^df_i\geq0\}$. If we denote $u_{d+1}:=u_1+\cdots+u_d$, we have
$$
\dist(x,\vartheta_i)=\frac{f_i(x)}{\|u_i\|},\ \forall i=1,\ldots,d+1
$$
for each $x\in\tau$. Now observe that $(x,t)\in S$ if and only if
\begin{multline*}
0\leq t\leq \veps^*\dist(x,\partial\tau)=\veps^*\min\{\dist(x,\vartheta_i):\,i=1,\ldots,d+1\}\\ 
\iff\ \min\{\veps^*\dist(x,\vartheta_i)-t:\,i=1,\ldots,d+1\}\geq0,\ t\geq0.
\end{multline*}
Since $\tau=\{x\in R^d:\,f_i(x)\geq0:\,i=1,\ldots,d+1\}$, we conclude
$$
S=\{(x,t)\in R^{d+1}:\,\veps^*f_i(x)-t\|u_i\|\geq0, t\geq0:\,i=1,\ldots,d+1\},
$$
which is a convex polyhedron of $R^{d+1}$. As $\tau\times\{0\}\subset S$ and $(p_\tau,\veps^*\dist(p_\tau,\partial\tau))\in S$, we deduce that $S$ contains the cone of vertex $(p_\tau,\veps^*\dist(p_\tau,\partial\tau))$ and basis $\tau\times\{0\}$, that is, the simplex $\widehat{\tau}_\veps$. As we have seen in Lemma \ref{border}, $\dist(p_\tau,\partial\tau)=\max_{x\in\tau}\{d(x,\partial\tau)\}$ and this maximum is only attained at $p_\tau$. Thus, $(p_\tau,\veps^*\dist(p_\tau,\partial\tau))$ is a vertex of $S$ and in particular, $S$ is contained in $\tau\times[0,\veps^*\dist(x,\partial\tau)]$. Moreover, in view of the definition \eqref{s} of $S$, all vertices of $\tau\times\{0\}$ are also vertices of $S$, which are exactly $d+1$ ones. Now as $S$ is described by $d+2$ equations and has dimension $d+1$, it must be a simplex and has exactly $d+2$ vertices. These $d+2$ vertices are those of the simplex $\widehat{\tau}_\veps$, so we conclude $S=\widehat{\tau}_\veps$, as required.
\end{proof}

\subsubsection{Cross-sections of tubular neighborhoods} We analyze the structure of cross-sections of the semialgebraic set $\ol{U}_{\tau,\veps}$ by affine subspaces of dimension $d+1$ containing the $d$-dimensional simplex $\tau$. We begin with a straightforward consequence of Lemma \ref{simplex}.

\begin{cor}\label{intsimplex0}
Let $L$ be the affine subspace generated by a $d$-dimensional simplex $\tau\subset R^n$, let $p\in R^n\setminus L$ and let $\cc{L,p}$ be the affine subspace generated by $p$ and $L$. Denote the half-space of $\cc{L,p}$ determined by $L$ and that contains $p$ by $\cc{L,p}^+$. Let $q$ be the unique point of $\cc{L,p}^+$ such that $\dist(q,L)=\dist(q,p_\tau)=\veps^*\dist(p_\tau,\partial\tau)$. Then $\cc{L,p}^+\cap\ol{U}_{\tau,\veps}$ is the simplex obtained by considering the convex hull of the set $\tau\cup\{q\}$.
\end{cor}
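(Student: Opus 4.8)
The plan is to reduce to the model position of Lemma \ref{simplex} by a rigid motion of $R^n$ and then read the answer directly off the explicit formula for $\ol{U}_{\tau,\veps}$ proved there (where $\ol{U}_{\tau,\veps}$ is understood via the ambient distances $\dist(\cdot,\tau)$, $\dist(\cdot,\partial\tau)$ in $R^n$). Two remarks make this work: since $\ol{U}_{\tau,\veps}$ is defined purely through $\dist(\cdot,\tau)$ and $\dist(\cdot,\partial\tau)$, it transforms equivariantly under isometries of $R^n$; and the incenter $p_\tau$, being characterized metrically in Lemma \ref{border}, also transforms equivariantly.

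First I would put $\tau$ in standard position. Place a vertex of $\tau$ at the origin so that $L$ becomes a $d$-dimensional linear subspace; pick an orthonormal basis $e_1,\dots,e_d$ of $L$, add a unit vector $e_{d+1}$ orthogonal to $L$ lying on the same side of $L$ as $p$ (possible since $p\notin L$), and complete to an orthonormal basis $e_1,\dots,e_n$ of $R^n$. The linear isometry $\phi$ carrying $(e_i)$ to the standard basis sends $L$ to $R^d\times\{0\}$, sends $\tau$ to a $d$-dimensional simplex $\sigma\subset R^d\times\{0\}$ (identified with a simplex in $R^d$), and sends $\cc{L,p}^+$ to the half-space $H:=\{(x,t,0)\in R^d\times R\times R^{n-d-1}:\,t\geq0\}$. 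By equivariance, $\phi(\ol{U}_{\tau,\veps})=\ol{U}_{\sigma,\veps}$ and $\phi(p_\tau)=p_\sigma$, so it suffices to treat $\sigma$ and $H$.

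Then I would invoke Lemma \ref{simplex} with $m:=n-d$ to get $\ol{U}_{\sigma,\veps}=\{(x,y)\in R^d\times R^m:\,(x,\|y\|)\in\widehat{\sigma}_\veps\}$. A point of $H$ has the form $(x,(t,0,\dots,0))$ with $t\geq0$, so $\|y\|=t$ and
$$
H\cap\ol{U}_{\sigma,\veps}=\{(x,t,0,\dots,0):\,t\geq0,\ (x,t)\in\widehat{\sigma}_\veps\}.
$$
Since $\widehat{\sigma}_\veps$ is the convex hull of $\sigma\times\{0\}$ and the apex $v:=(p_\sigma,\veps^*\dist(p_\sigma,\partial\sigma))$, every point of $\widehat{\sigma}_\veps$ has nonnegative last coordinate; hence the constraint $t\geq0$ is vacuous, and under the isometry $H\to R^d\times[0,\infty),\ (x,t,0,\dots,0)\mapsto(x,t)$, the intersection $H\cap\ol{U}_{\sigma,\veps}$ becomes exactly $\widehat{\sigma}_\veps$, that is, the convex hull of $\sigma\cup\{v\}$. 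Finally I would check that $\phi(q)=v$: the apex $v$ is the unique point of $H$ whose orthogonal projection onto $R^d\times\{0\}$ is $p_\sigma$ and whose distance to $R^d\times\{0\}$ (equivalently, by Pythagoras, to $p_\sigma$) equals $\veps^*\dist(p_\sigma,\partial\sigma)$; transporting by $\phi$ the conditions $\dist(q,L)=\dist(q,p_\tau)=\veps^*\dist(p_\tau,\partial\tau)$ defining $q$ gives precisely this characterization, using $\phi(L)=R^d\times\{0\}$, $\phi(p_\tau)=p_\sigma$ and $\phi(\cc{L,p}^+)=H$. Pulling back by $\phi^{-1}$ then yields that $\cc{L,p}^+\cap\ol{U}_{\tau,\veps}=\phi^{-1}(\widehat{\sigma}_\veps)$ is the convex hull of $\tau\cup\{q\}$.

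I do not expect a genuine obstacle here — the corollary really is straightforward once Lemma \ref{simplex} is available. The only point that needs care is the bookkeeping of the rigid motion, so that $L$, $\tau$, the incenter $p_\tau$, the set $\ol{U}_{\tau,\veps}$ and the half-space $\cc{L,p}^+$ are all carried to their model counterparts simultaneously; after that, everything follows at once.
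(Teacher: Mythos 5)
Your proof is correct and takes precisely the route the paper intends: the paper declares this corollary a ``straightforward consequence of Lemma \ref{simplex}'' without writing a proof, and your isometry-normalization argument (transporting $\tau$, $L$, $p_\tau$, $\ol{U}_{\tau,\veps}$ and $\cc{L,p}^+$ simultaneously into the model position, then reading $\widehat{\sigma}_\veps$ off the explicit description) is exactly the missing bookkeeping.
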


\begin{cor}\label{smallveps}
Let $\sigma\subset R^n$ be a simplex and let $\tau$ be one of its faces. Then $\ol{U}_{\tau,\veps}\setminus\partial\tau$ only meets the faces of $\sigma$ that contain $\tau$ for each $\veps>0$ small enough. 
\end{cor}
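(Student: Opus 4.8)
The plan is to reduce the statement to the explicit description of $\ol{U}_{\tau,\veps}$ obtained inside the proof of Lemma \ref{simplex}, combined with a ``positive angle'' estimate between the affine spans of the faces of $\sigma$. After a linear change of coordinates I identify the affine span of the $d$-dimensional simplex $\tau$ with $R^d\times\{0\}\subset R^d\times R^m=R^n$, so that by the identity displayed in the proof of Lemma \ref{simplex},
$$
\ol{U}_{\tau,\veps}=\{(x,y)\in R^d\times R^m:\,x\in\tau,\ \|y\|\leq\veps^*\dist(x,\partial\tau)\}.
$$
Since $\dist(x,\partial\tau)=0$ precisely when $x\in\partial\tau$, a point $(x,y)\in\ol{U}_{\tau,\veps}$ with $x\in\partial\tau$ must have $y=0$; hence $\ol{U}_{\tau,\veps}\setminus\partial\tau=\{(x,y):\,x\in\tau^0,\ \|y\|\leq\veps^*\dist(x,\partial\tau)\}$, contained in the slab $\{(x,y):\,x\in\tau,\ \|y\|\leq\veps^*\lambda\}$ with $\lambda:=\dist(p_\tau,\partial\tau)=\max_{x\in\tau}\dist(x,\partial\tau)$ (Lemma \ref{border}). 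As $\veps\searrow0$ we have $\veps^*\to0$, so this set collapses onto $\tau$; and it clearly meets every face of $\sigma$ containing $\tau$ already because it contains $\tau^0$, so the whole content is that it misses every face $\rho$ of $\sigma$ with $\tau\not\subset\rho$.

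I would sort such faces $\rho$ into three types using only the affine independence of the vertices of $\sigma$. Since the $d+1$ vertices of $\tau$ span $R^d\times\{0\}$, no other vertex of $\sigma$ lies in $\operatorname{aff}(\tau)$; so if $\rho\subset\operatorname{aff}(\tau)$ then $\rho$ is a proper face of $\tau$, whence $\rho\subset\partial\tau$ and $\rho\cap(\ol{U}_{\tau,\veps}\setminus\partial\tau)=\varnothing$ for every $\veps$. For the remaining $\rho$ one has $\rho\cap\operatorname{aff}(\tau)=\rho\cap\tau=:\tau'$, a proper face of $\tau$, possibly empty. If $\tau'=\varnothing$, then $\rho$ (compact) and $\operatorname{aff}(\tau)$ (closed) are disjoint, so $\dist(z,\operatorname{aff}(\tau))=\|y\|$ is bounded below on $\rho$ by some $\delta_\rho>0$, and a point of $\rho$ lying in $\ol{U}_{\tau,\veps}$ would force $\delta_\rho\leq\veps^*\lambda$.

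The substantial case is $\tau'\neq\varnothing$ and $\rho\not\subset\operatorname{aff}(\tau)$, where the key step is the estimate $\dist(z,\operatorname{aff}(\tau))\geq c_\rho\,\dist(z,\operatorname{aff}(\tau'))$ for all $z\in\operatorname{aff}(\rho)$, with $c_\rho>0$. To get it, translate a vertex of $\tau'$ to the origin so that the three affine spans become linear subspaces $E,F,G$ with $G\subset E\cap F$ and, by affine independence, $E\cap F=G$; passing to the quotient $R^n/G$ the images of $E$ and $F$ meet only in $0$, and two linear subspaces of a finite-dimensional space meeting trivially make a positive angle, which is the claimed bound. I would then combine this with two elementary metric facts, valid for $x\in\tau$ and $z=(x,y)$: first $\dist(x,\partial\tau)\leq\dist_{R^d}(x,\operatorname{aff}(\tau'))$ (the nearest point of $\operatorname{aff}(\tau')$ to $x$ either lies in $\tau'\subset\partial\tau$ or lies outside $\tau$, and in both cases the segment from $x$ to it meets $\partial\tau$ at distance at most that nearest distance); second $\dist_{R^d}(x,\operatorname{aff}(\tau'))\leq\dist(z,\operatorname{aff}(\tau'))$ by Pythagoras, since $\operatorname{aff}(\tau')\subset R^d\times\{0\}$. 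Now for $z=(x,y)\in\rho\cap(\ol{U}_{\tau,\veps}\setminus\partial\tau)$ we have $x\in\tau^0$, so $\dist(x,\partial\tau)>0$, and
$$
c_\rho\,\dist(x,\partial\tau)\leq c_\rho\,\dist(z,\operatorname{aff}(\tau'))\leq\dist(z,\operatorname{aff}(\tau))=\|y\|\leq\veps^*\,\dist(x,\partial\tau),
$$
so $c_\rho\leq\veps^*$. Choosing $\veps$ small enough that $\veps^*$ is below all the finitely many numbers $c_\rho$ and $\delta_\rho/\lambda$ produces a contradiction, and the corollary follows.

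I expect the main obstacle to be precisely that middle display: promoting the purely combinatorial identity $\operatorname{aff}(\rho)\cap\operatorname{aff}(\tau)=\operatorname{aff}(\rho\cap\tau)$ to a \emph{uniform} lower bound $\dist(\cdot,\operatorname{aff}(\tau))\geq c_\rho\,\dist(\cdot,\operatorname{aff}(\tau'))$ on $\operatorname{aff}(\rho)$, and then pairing it with the comparison between $\dist(x,\partial\tau)$ and $\dist(x,\operatorname{aff}(\tau'))$ so that the two factors $\dist(x,\partial\tau)$ cancel; the rest is bookkeeping with the convex geometry of a single simplex.
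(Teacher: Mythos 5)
Your proof is correct, but it is a genuinely different argument from the paper's. The paper reduces at once to the $(m-1)$-dimensional faces $\beta_i$ of $\sigma$ that do not contain $\tau$ (any other such face lies inside some $\beta_i$), extends $\operatorname{aff}(\beta_i)$ to a hyperplane $H_i'$ of $R^n$ orthogonal to $\operatorname{aff}(\sigma)$, and chooses $\veps$ small enough that the ball $\Bb_n(p_\tau,\veps^*\dist(p_\tau,\partial\tau))$ avoids every $H_i'$; then, using the description of $\ol{U}_{\tau,\veps}$ from Corollary~\ref{intsimplex0} as a union of simplices of the form $\operatorname{conv}(\tau\cup\{q\})$ with $q$ in that ball, it concludes by convexity that $\ol{U}_{\tau,\veps}\setminus\partial\tau$ lies strictly on the $p_\tau$-side of each $H_i'$. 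Your argument instead treats each bad face $\rho$ individually, splits them by the relation of $\rho$ to $\operatorname{aff}(\tau)$, and for the nontrivial type replaces separation-by-hyperplane with a uniform lower bound $\dist(\cdot,\operatorname{aff}(\tau))\geq c_\rho\,\dist(\cdot,\operatorname{aff}(\rho\cap\tau))$ on $\operatorname{aff}(\rho)$, obtained via the positive-angle/compactness argument in the quotient $R^n/\operatorname{aff}(\rho\cap\tau)$, together with the two metric comparisons $\dist(x,\partial\tau)\leq\dist(x,\operatorname{aff}(\tau'))\leq\dist(z,\operatorname{aff}(\tau'))$. Both proofs use only the explicit formula for $\ol{U}_{\tau,\veps}$ from Lemma~\ref{simplex} and the finiteness of the face poset; the paper's is slightly more economical because it handles only maximal offending faces via a single separating hyperplane each and does not require a case split, while yours avoids Corollary~\ref{intsimplex0} entirely and makes explicit the quantitative mechanism (the constant $c_\rho$) that the hyperplane approach encodes implicitly in the gap $\dist(p_\tau,H_i')>\veps^*\dist(p_\tau,\partial\tau)$. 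One small remark: your cancellation step requires $\dist(x,\partial\tau)>0$, which you correctly secure by observing that a point of $\ol{U}_{\tau,\veps}$ with $x\in\partial\tau$ must have $y=0$ and hence lie in $\partial\tau$; it is worth keeping that line, as the whole chain collapses without it.
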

\begin{proof}
Let $m:=\dim(\sigma)$ and $\beta_1,\ldots,\beta_r$ the $(m-1)$-dimensional faces of $\sigma$ that do not contain $\tau$. Let $\veps>0$ be such that $\veps^*d(p_\tau,\partial\tau)<\min_{i=1,\ldots,r}\{d(p_\tau,\beta_i)\}$. Denote the affine subspace generated by $\sigma$ with $W$ and the hyperplane of $W$ generated by $\beta_i$ with $H_i$. Notice that each face of $\sigma$ that does not contain $\tau$ is contained in some face $\beta_i$ and therefore in some $H_i$ for $i=1,\ldots,r$.

Let $\veps>0$ be such that
$$
\veps^*d(p_\tau,\partial\tau)<\min_{i=1,\ldots,r}\{d(p_\tau,H_i)\} 
$$
and $H_i':=\{h_i'=0\}$ the hyperplane of $R^n$ orthogonal to $W$ such that $W\cap H_i'=H_i$. Notice that the ball $\Bb_n(p_\tau,\veps^*d(p_\tau,\partial\tau))$ does not meet $H_i'$ for $i=1,\ldots,r$. We may assume that it is contained in the strict half space ${H_i'}^{>}:=\{h_i'>0\}$. Since $\tau\not\subset\beta_i$, we deduce $\tau\not\subset H_i'$ and so $\tau^0\cap H_i'=\varnothing$. As $p_\tau\in{H_i'}^{>}$, we conclude $\tau^0\subset {H_i'}^{>}$.

Let $L$ be the affine subspace generated by $\tau$. By Corollary \ref{intsimplex0}
$$
\ol{U}_{\tau,\veps}=\bigcup_q\cc{L,q}^+\cap\ol{U}_{\tau,\veps}
$$
where $q\in\Bb_n(p_\tau,\veps^*d(p_\tau,\partial\tau))\cap(p_\tau+L^\bot)$ and $\cc{L,q}^+\cap\ol{U}_{\tau,\veps}$ is the simplex obtained by considering the convex hull of the set $\tau\cup\{q\}$. Thus, $(\cc{L,q}^+\cap\ol{U}_{\tau,\veps})\setminus\partial\tau\subset{H_i'}^{>}$ for $i=1,\ldots,r$. Hence, $\ol{U}_{\tau,\veps}\setminus\partial\tau\subset{H_i'}^{>}$ for $i=1,\ldots,r$ and so $\ol{U}_{\tau,\veps}\setminus\partial\tau$ does not meet the faces of $\sigma$ that do not contain the simplex $\tau$.
\end{proof}

\begin{lem}\label{intsimplex}
Let $\sigma\subset R^n$ be an $n$-dimensional simplex such that the simplex $\tau$ is one of its faces. Denote the affine subspace generated by $\tau$ with $L$ and its dimension with $d$. Let $\veps>0$ be small enough such that $\ol{U}_{\tau,\veps}\setminus\partial\tau$ only meets the faces of $\sigma$ that contain $\tau$. Then $\sigma\cap\cc{L,p}\cap\ol{U}_{\tau,\veps}=\cc{L,p}^+\cap\ol{U}_{\tau,\veps}$ for each $p\in\sigma\setminus\tau$.
\end{lem}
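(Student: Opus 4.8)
Write $\sigma=\operatorname{conv}(a_0,\ldots,a_n)$ and $\tau=\operatorname{conv}(a_0,\ldots,a_d)$, so that the direction space of $L$ is $W:=\operatorname{span}(a_1-a_0,\ldots,a_d-a_0)$. Since $\sigma\cap L=\tau$ for a simplex, the given point $p\in\sigma\setminus\tau$ lies off $L$, whence $\cc{L,p}$ has dimension $d+1$; let $u$ be the unit vector in the direction space of $\cc{L,p}$ that is orthogonal to $W$ and points to the side of $\cc{L,p}^+$. The plan is to prove the two inclusions of $\sigma\cap\cc{L,p}\cap\ol{U}_{\tau,\veps}=\cc{L,p}^+\cap\ol{U}_{\tau,\veps}$ separately, using Corollary \ref{intsimplex0} for the right‑hand side: the unique point $q$ of $\cc{L,p}^+$ with $\dist(q,L)=\dist(q,p_\tau)=\veps^*\dist(p_\tau,\partial\tau)$ is $q=p_\tau+\veps^*\dist(p_\tau,\partial\tau)\,u$, and $\cc{L,p}^+\cap\ol{U}_{\tau,\veps}$ equals the convex hull of $\tau\cup\{q\}$.

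For the inclusion $\subseteq$ it suffices to show $\sigma\cap\cc{L,p}\subseteq\cc{L,p}^+$, which I would prove in barycentric coordinates with respect to $\sigma$. Writing an affine combination as $\sum_{i=0}^{n}\mu_i a_i$ with $\sum_i\mu_i=1$, the subspace $L$ is the locus $\mu_{d+1}=\cdots=\mu_n=0$; if $(\beta_0,\ldots,\beta_n)$ are the barycentric coordinates of $p$, the affine span $\cc{L,p}$ is the locus where $(\mu_{d+1},\ldots,\mu_n)=t\,(\beta_{d+1},\ldots,\beta_n)$ for some $t\in R$, and here $L$ is the slice $t=0$ while $p$ sits at $t=1$, so $\cc{L,p}^+=\{t\ge0\}$. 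As $p\in\sigma\setminus\tau$, some $\beta_j$ with $j>d$ is strictly positive; hence every point of $\sigma$ (all $\mu_i\ge0$) lying in $\cc{L,p}$ has $t\ge0$, i.e.\ lies in $\cc{L,p}^+$. Intersecting with $\ol{U}_{\tau,\veps}$ and invoking Corollary \ref{intsimplex0} yields $\sigma\cap\cc{L,p}\cap\ol{U}_{\tau,\veps}\subseteq\cc{L,p}^+\cap\ol{U}_{\tau,\veps}=\operatorname{conv}(\tau\cup\{q\})$.

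The reverse inclusion reduces, since $\tau\subseteq\sigma$ and $\sigma$ is convex, to the assertion $q\in\sigma$, which is the step I expect to be the main obstacle. I would establish it by locating $q$ with respect to the facets of $\sigma$. Let $\phi_1,\ldots,\phi_s$ be the facets of $\sigma$ that contain $\tau$ and $\beta_1,\ldots,\beta_r$ those that do not, and pick affine functions $h_{\phi_k},h_{\beta_i}$, nonnegative on $\sigma$, with $\sigma=\bigcap_{k=1}^{s}\{h_{\phi_k}\ge0\}\cap\bigcap_{i=1}^{r}\{h_{\beta_i}\ge0\}$. The affine hull of $\phi_k$ contains the affine hull $L$ of $\tau$, and $p_\tau\in L$, so $h_{\phi_k}(p_\tau)=0$; moreover the facets of $\sigma$ passing through the relative interior point $p_\tau$ of $\tau$ are precisely $\phi_1,\ldots,\phi_s$, so the tangent cone $T_{p_\tau}\sigma$ is the set of directions $v$ with $\ell_k(v)\ge0$ for all $k$, where $\ell_k$ denotes the linear part of $h_{\phi_k}$. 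Hence $h_{\phi_k}(q)=h_{\phi_k}(p_\tau)+\veps^*\dist(p_\tau,\partial\tau)\,\ell_k(u)\ge0$ for every $k$, provided $u\in T_{p_\tau}\sigma$. For the latter, observe that $p-p_\tau\in T_{p_\tau}\sigma$ by convexity of $\sigma$, and that, $p_\tau$ being a relative interior point of $\tau$, one has $T_{p_\tau}\sigma=W+\operatorname{cone}(a_{d+1}-p_\tau,\ldots,a_n-p_\tau)$; writing $p-p_\tau=w+k$ with $w\in W$ and $k$ in this cone, the vector $u$ is a positive multiple of the orthogonal projection of $k$ onto the orthogonal complement of $W$, which again lies in $T_{p_\tau}\sigma$ because the cone above is invariant under adding vectors of $W$. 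Finally, $q\in\ol{U}_{\tau,\veps}$ and $q\notin L$, so $q\in\ol{U}_{\tau,\veps}\setminus\partial\tau$; in the precise form established in the proof of Corollary \ref{smallveps}, this set lies in the open half-space ${H_i'}^{>}$ of $R^n$ bounded by the affine hull of $\beta_i$ and containing $\tau^0$, so $h_{\beta_i}(q)>0$ for every $i$. Together with the above this gives $q\in\sigma$, and the two inclusions then yield the claimed identity. The only genuine inputs are thus Corollary \ref{smallveps} (in fact its proof) and the tangent cone description of $\sigma$ at the incenter $p_\tau$; everything else is formal.
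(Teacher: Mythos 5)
Your proof correctly reduces the identity to the single assertion $q\in\sigma$ via Corollary \ref{intsimplex0} (and the barycentric‑coordinate argument for the inclusion $\sigma\cap\cc{L,p}\subseteq\cc{L,p}^+$, which the paper leaves implicit, is sound). Your verification of $h_{\phi_k}(q)\ge0$ for the facets $\phi_k$ containing $\tau$, through the tangent cone $T_{p_\tau}\sigma=W+\operatorname{cone}(a_{d+1}-p_\tau,\dots,a_n-p_\tau)$ and the observation $u\in T_{p_\tau}\sigma$, is also correct and genuinely uses the crucial input $p\in\sigma$. The problem lies in the last step, for the facets $\beta_i$ not containing $\tau$.

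There you invoke the containment $\ol{U}_{\tau,\veps}\setminus\partial\tau\subset{H_i'}^{>}$ ``in the precise form established in the proof of Corollary \ref{smallveps}.'' But that containment is obtained there under the stronger numeric hypothesis $\veps^*\dist(p_\tau,\partial\tau)<\min_i\dist(p_\tau,H_i)$, which is \emph{not} what Lemma \ref{intsimplex} assumes. The present lemma only assumes the weaker, purely combinatorial hypothesis that $\ol{U}_{\tau,\veps}\setminus\partial\tau$ avoids the faces of $\sigma$ not containing $\tau$, and this does not imply the half‑space containment. A concrete instance: take $\sigma=\operatorname{conv}\bigl((0,0,0),(1,0,0),(1,1,0),(0,1,1)\bigr)\subset R^3$ and $\tau=\operatorname{conv}\bigl((0,0,0),(1,0,0)\bigr)$, so $\beta_2:=\operatorname{conv}\bigl((0,0,0),(1,1,0),(0,1,1)\bigr)$ has $H_2=\{x-y+z=0\}$. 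For any $\veps$ with $\veps^*=\tfrac{\veps}{\sqrt{1-\veps^2}}\in(\tfrac1{\sqrt2},1)$ one checks that $\ol{U}_{\tau,\veps}\cap\beta_i\subset\partial\tau$ for both facets $\beta_1,\beta_2$ not containing $\tau$ — so the lemma's hypothesis holds — yet the point $z_0:=(\tfrac12,\tfrac{c}{\sqrt2},-\tfrac{c}{\sqrt2})\in\ol{U}_{\tau,\veps}\setminus\partial\tau$ with $c:=\tfrac{\veps^*}{2}$ satisfies $h_{\beta_2}(z_0)=\tfrac12-c\sqrt2<0$, so $\ol{U}_{\tau,\veps}\setminus\partial\tau\not\subset\{h_{\beta_2}>0\}$. (The lemma itself is still true here: $z_0$ does not arise as the $q$ belonging to any $p\in\sigma\setminus\tau$, because $\sigma\subset\{z\ge0\}$.) So the half‑space property of $\ol{U}_{\tau,\veps}$ alone cannot be the reason $q\in\sigma$; the constraint $p\in\sigma$ must be used for the $\beta_i$ as well.

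The paper closes precisely this gap by a different mechanism: it does not try to check the facet inequalities for $q$ one at a time, but argues by contradiction inside the polyhedron $\pol:=\sigma\cap\cc{L,p}$. After replacing $p$ by a nearby point of the open segment $(p_\tau,p)$ so that $p\in\ol{U}_{\tau,\veps}\cap\Int(\pol)$, the segment $[p,q]$ lies in $\ol{U}_{\tau,\veps}$ and, if $q\notin\sigma$, exits $\pol$ through a point $p_0\in\partial\pol\setminus L$. Such a $p_0$ lies on a $d$‑dimensional face of $\pol$ cut out by a facet of $\sigma$ not containing $\tau$, and $p_0\in\ol{U}_{\tau,\veps}\setminus\partial\tau$, which contradicts the hypothesis directly. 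You would need to replace your last paragraph by an argument of this kind; the part of your proof up to and including the tangent‑cone computation can stay.
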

\begin{proof}
By Corollary \ref{intsimplex0} there exists a point $q\in\cc{L,p}^+$ such that $\cc{L,p}^+\cap\ol{U}_{\tau,\veps}$ is the convex hull of $\tau\cup\{q\}$. Thus, since $\sigma$ is convex and $\tau$ is a face of $\sigma$, it is enough to check that $q$ belongs to $\sigma$. Assume by contradiction that $q$ does not belong to $\sigma$. Observe that $\pol:=\sigma\cap\cc{L,p}$ is a convex polyhedron of dimension $d+1$ because it contains $\{p\}\cup\tau$. As $\ol{U}_{\tau,\veps}$ is a neighborhood of $\tau^0$, the segment connecting $p$ with $p_\tau$ meets $\ol{U}_{\tau,\veps}$ in a (maybe smaller) segment; hence, we may assume $p\in\ol{U}_{\tau,\veps}\cap\Int(\pol)$. As $\cc{L,p}^+\cap\ol{U}_{\tau,\veps}$ is a simplex and $q\not\in\sigma$, the segment that connects $q$ and $p$ meets $\partial\pol$ in a point $p_0\in\partial\pol$. Observe that $p_0\not\in L$ because both $q$ and $p$ belong to the convex set $\cc{L,p}^+\setminus L$; hence, $p_0$ belongs to a $d$-dimensional face of $\pol$ different from $\tau$. This $d$-dimensional face corresponds to the intersection of $\cc{L,p}$ with a face of $\sigma$ that does not contain $\tau$. We conclude that $\ol{U}_{\tau,\veps}$ meets a face of $\sigma$ that does not contain $\tau$, which contradicts our choice of $\veps$.
\end{proof}

\subsubsection{Appropriate embedment of some essential differences of semialgebraic sets} 
Let $S$ be the difference between a simplex $\sigma$ and the closure $\ol{U}_{\tau,\veps}$ of a suitable tubular neighborhood of the interior of one of its faces $\tau$. We continue proving that the obstructing set $\eta(S)$ is contained in $\partial\tau$. In fact, $S$ is appropriately embedded but it is cumbersome to prove. For our purposes the following is enough:

\begin{lem}\label{lcd}
Let $\sigma\subset R^n$ be a simplex and $\tau$ a face of $\sigma$. Let $\veps>0$ be small enough such that $\ol{U}_{\tau,\veps}\setminus\partial\tau$ only meets the faces of $\sigma$ that contain $\tau$. Consider the semialgebraic sets $S:=\sigma^0\setminus\ol{U}_{\tau,\veps}$ and $\widehat{S}:=\sigma\setminus\ol{U}_{\tau,\veps}$. Then the sets $\eta(S)$ and $\eta(\widehat{S})$ are contained in $\partial\tau$.
\end{lem}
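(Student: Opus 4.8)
The plan is to determine the germ of $S$ and of $\widehat S$ at an arbitrary point $q\notin\partial\tau$ and to check, in a short list of cases, that neither condition defining $\eta$ holds there; the work concentrates in the one case where $q$ lies both on $\partial\sigma$ and on $\partial\ol{U}_{\tau,\veps}$. First I would work inside $L_\sigma$, the affine hull of $\sigma$, and put $e:=\dim\sigma=\dim L_\sigma$. By Lemma \ref{simplex}, $\ol{U}_{\tau,\veps}=\{(x,y):x\in\tau,\ \|y\|\le\veps^*\dist(x,\partial\tau)\}$, and since $x\mapsto\dist(x,\partial\tau)=\min_i\dist(x,\vartheta_i)$ is a minimum of affine functions on $\tau$, hence concave (this concavity underlies the proofs of Lemmas \ref{border} and \ref{simplex}), the function $g(x,y):=\|y\|-\veps^*\dist(x,\partial\tau)$ is convex; so $\ol{U}_{\tau,\veps}=\{g\le0\}$ is a closed convex set and $U_{\tau,\veps}=\{g<0\}$. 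Put $P:=\ol{U}_{\tau,\veps}\cap L_\sigma$ and $P^\circ:=U_{\tau,\veps}\cap L_\sigma$. Since $g$ is constant on no relatively open subset of $L_\sigma$ (otherwise $\dist(\cdot,\partial\tau)$ would be constant on an open piece of $\tau$, forcing some affine $\dist(\cdot,\vartheta_i)$ to be globally constant), $P$ is $e$-dimensional with $\Int_{L_\sigma}P=P^\circ$, $\partial_{L_\sigma}P=P\setminus P^\circ$, and $\tau^0\subset P^\circ$. Finally $\cl(S)=\cl(\widehat S)=:C$ (as $P$ is closed and $\sigma^0$ dense in $\sigma$), with $S=\sigma^0\setminus P=\sigma^0\cap\{g>0\}\subset C\subset\sigma\setminus P^\circ$ and $\widehat S=\sigma\setminus P=\sigma\cap\{g>0\}$.

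Throughout I would use two soft facts about a full-dimensional closed convex set $Q$ in a $k$-dimensional affine space and a point $q\in\partial Q$: the germ of the complement of $Q$ at $q$ is semialgebraically connected (cut with a supporting hyperplane at $q$ and push off $Q$), and $\dim_q\partial Q=k-1$ (radially project from an interior point); the same applies to the interior and boundary of a full-dimensional polyhedral cone. Now fix $q\notin\partial\tau$. If $q\notin\sigma$ or $q\in P^\circ$ (which includes $q\in\tau$, since then $q\in\tau^0\subset P^\circ$), then $q\notin C$ and there is nothing to check; so assume $q\in\sigma\setminus P^\circ$, whence $q\notin\tau$. If $q\notin\ol{U}_{\tau,\veps}$, then near $q$ one has $S=\sigma^0$ and $\widehat S=C=\sigma$, and the claim reduces to the local structure of the polytope $\sigma$. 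So assume moreover $q\in\ol{U}_{\tau,\veps}$, i.e. $q\in\partial_{L_\sigma}P$; writing $q=(x_0,y_0)$ with $x_0\in\tau$, from $g(q)=0$ and $q\notin\partial\tau$ we get $\dist(x_0,\partial\tau)>0$, so $x_0\in\tau^0$ and $y_0\ne0$. If in addition $q\in\sigma^0$, then near $q$ we have $S=\widehat S=L_\sigma\setminus P$, $C=L_\sigma\setminus P^\circ$, so $S_q$ is semialgebraically connected of dimension $e$ and $\dim(C_q\setminus S_q)=\dim_q\partial_{L_\sigma}P=e-1$; done for $S$ and $\widehat S$ at such $q$.

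The crux is $q\in\partial_{L_\sigma}\sigma\cap\partial_{L_\sigma}P$, $q\notin\partial\tau$. Since $q\in\ol{U}_{\tau,\veps}\setminus\partial\tau$, the hypothesis on $\veps$ (Corollary \ref{smallveps}) forces the smallest face $F_0$ of $\sigma$ containing $q$ to contain $\tau$, so $d<d_0:=\dim F_0<e$. As $\sigma$ is a polytope, in a small ball $N_q$ about $q$ we have $\sigma\cap N_q=(q+V_0\oplus K)\cap N_q$ and $\sigma^0\cap N_q=(q+V_0\oplus\Int K)\cap N_q$, with $V_0$ the $d_0$-dimensional direction space of the affine hull of $F_0$ and $K\subset V_0^\perp\cap L_\sigma$ an $(e-d_0)$-dimensional polyhedral cone with apex $0$ (and $\dim_0\partial K=e-d_0-1$, since $q\notin\sigma^0$ means $K$ is proper). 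Translating $q$ to $0$ and setting $Q:=P\cap N_q$ (a full-dimensional closed convex set, $0\in\partial Q$), we may assume $q\in C$; then $\sigma^0\setminus\ol{U}_{\tau,\veps}$ accumulates at $q$, while $\Int_{L_\sigma}P$ also accumulates at $q$ from within $\sigma$ (the segment from $q=(x_0,y_0)$ to $x_0\in\tau^0$ lies in $\sigma$ by convexity, and $g$ is strictly negative on it away from $q$). Hence $g$ changes sign on the connected set $\sigma^0\cap N_q$, so $\{g=0\}\cap\sigma^0$ — which lies in $C\setminus S$ and in $C\setminus\widehat S$ and is relatively open in the $(e-1)$-dimensional set $\partial_{L_\sigma}P$ — has dimension $e-1$ near $q$; with the frontier-dimension bound this gives $\dim(C_q\setminus S_q)=\dim(C_q\setminus\widehat S_q)=e-1$. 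Finally $S_q$ is semialgebraically connected by the half-ball argument adapted to the conical model $(V_0\oplus\Int K)\setminus Q$, and $\widehat S_q$ is then connected because $S\subset\widehat S\subset C=\cl(S)$. Therefore $q\notin\eta(S)\cup\eta(\widehat S)$, and $\eta(S)\subset\partial\tau$, $\eta(\widehat S)\subset\partial\tau$ follow.

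I expect this last case to be the main obstacle: it is the only place where the smallness of $\veps$ matters, Corollary \ref{smallveps} being invoked precisely so that the relevant face $F_0$ of $\sigma$ engulfs $\tau$, which turns the germ of $\sigma$ at $q$ into a product of a linear space with a polyhedral cone against which the convex tube $\ol{U}_{\tau,\veps}$ behaves tamely. The two delicate points there are the semialgebraic connectedness of $S_q$ (the polytope is only locally conical, not a neighborhood, at $q$) and the exactness of $\dim(C_q\setminus S_q)=e-1$ (ruling out a ``pinhole'' frontier); both come from playing the conical structure of $\sigma$ off against the convexity of $\ol{U}_{\tau,\veps}$.
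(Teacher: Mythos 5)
Your proposal takes the same broad strategy as the paper — exploit the convexity of $\ol{U}_{\tau,\veps}$ together with the smallness of $\veps$ to reduce to a local analysis at each $q\in\cl(S)\cap(\ol{U}_{\tau,\veps}\setminus\partial\tau)$, with the cases $q\notin\ol{U}_{\tau,\veps}$ and $q\in\sigma^0$ dispatched by the local structure of polytopes and convex bodies respectively — but you do not reproduce the paper's decisive device, and this leaves a genuine gap exactly where you yourself flag the difficulty.

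The paper does \emph{not} try to analyze the germ of $(V_0\oplus\Int K)\setminus Q$ for a general convex body $Q$. It first replaces $\sigma^0$ (resp.\ $\sigma$) locally at $q$ by the polyhedral wedge $T:=\{h_1>0,\ldots,h_r>0\}\setminus\ol{U}_{\tau,\veps}$ (resp.\ $\widehat T$), where the $H_i=\{h_i=0\}$ are the hyperplanes of the $(n-1)$-dimensional faces of $\sigma$ containing $\tau$ — this uses Corollary~\ref{smallveps} exactly as you do. Then, and this is the step you omit, it builds the semialgebraic gauge homeomorphism $g_1:R^n\to R^n$ centered at $p_\tau$, preserving every ray through $p_\tau$ and mapping $\ol{U}_{\tau,\veps}$ onto the round ball $\Bb_n(0,1)$. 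Because every $H_i$ passes through $p_\tau$ (as $\tau\subset H_i$ and $p_\tau\in\tau^0$), $g_1$ also maps each half-space $\{h_i>0\}$ to itself. Thus $g_1(T)=\{h_1>0,\ldots,h_r>0\}\setminus\Bb_n(0,1)$, an entirely explicit model in which the required semialgebraic connectedness of germs and the frontier dimension count $e-1$ are immediate (the sphere is transverse to each $H_i$ since $0\in H_i$).

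Your substitute for that step — ``the half-ball argument adapted to the conical model $(V_0\oplus\Int K)\setminus Q$'' — is not justified. For a generic full-dimensional closed convex set $Q$ with $0\in\partial Q$ sitting inside a polyhedral cone $V_0\oplus\Int K$ with apex at $0$, the local complement need not be connected: a thin convex wedge of $Q$ with vertex at $0$ can slice the cone into two components near $0$, and the supporting-hyperplane half-ball argument fails because the supporting half-space $H^+$ of $Q$ at $0$ may miss the cone entirely. What rules this out for $Q=\ol{U}_{\tau,\veps}\cap L_\sigma$ is precisely its radial structure around $p_\tau$ (equivalently, Lemma~\ref{intsimplex} on cross-sections, or the gauge normalization) — a geometric fact beyond ``$Q$ is convex and $0\in\partial Q$,'' which your ``soft fact'' records. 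A smaller gap of the same kind: your argument that $g$ changes sign near $q$ produces a segment in $\sigma$ on which $g<0$, but you need $g<0$ points inside $\sigma^0$ to conclude that $\{g=0\}\cap\sigma^0$ accumulates at $q$; this again wants the transversality that the round model makes visible. So the proposal identifies the correct reduction and the correct obstacle, but the obstacle is not overcome — the radial gauge map $g_1$ (or some equivalent use of the specific tube geometry of $\ol{U}_{\tau,\veps}$) is the missing idea.
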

\begin{figure}[ht]
\centering
\begin{tikzpicture}
\draw[fill=black!20!white] (0,2) -- (5,0) -- (4,4) -- (0,2);
\draw[fill=black!20!white] (4,4) -- (5,0) -- (6,2) -- (4,4);
\draw[fill=black!40] (0,2) -- (3,2.5) -- (5,0) -- (0,2);
\draw[ultra thick](0,2) -- (5,0);

\draw[fill=black!20!white] (8,2) -- (11,2.5) -- (13,0) -- (12,4) -- (8,2);
\draw[fill=black!20!white] (12,4) -- (13,0) -- (14,2) -- (12,4);
\draw[fill=black!40] (8,2) -- (11,1.5) arc(-9.462322208025803:9.462322208025803:3.04138126514911cm) -- (8,2);
\draw[fill=black!40] (13,0) -- (11,2.5) arc(9.462322208025803:-9.462322208025803:3.04138126514911cm) -- (13,0);
\draw[ultra thick,dashed](8,2) -- (13,0);

\draw (2,3.5) node{$\sigma$};
\draw (2.25,0.75) node{$\tau$};
\draw (2.75,1.75) node{$\ol{U}_{\tau,\veps}\cap\sigma$};
\draw (9.5,3.75) node{$\widehat{S}:=\sigma\setminus\ol{U}_{\tau,\veps}$};
\draw (10.25,0.75) node{$\tau$};
\draw (7,2) node{\Large$\displaystyle\leadsto$};

\end{tikzpicture}
\caption{Appropriate embedment of the differences $S:=\sigma^0\setminus\ol{U}_{\tau,\veps}$ and $\widehat{S}:=\sigma\setminus\ol{U}_{\tau,\veps}$.}
\end{figure}
\begin{proof}
As $\sigma$ and $\sigma^0$ are appropriately embedded, there is nothing to prove for points in the set $\cl(S)\setminus\ol{U}_{\tau,\veps}=\cl(\widehat{S})\setminus\ol{U}_{\tau,\veps}$. Thus, it only remains to check that: \em For each $q\in\cl(S)\cap(\ol{U}_{\tau,\veps}\setminus\partial\tau)$, the germs $S_q$ and $\widehat{S}_q$ are semialgebraically connected, $\dim(\cl(S)_q\setminus S_q)=\dim(S_q)-1$ and $\dim(\cl(\widehat{S})_q\setminus\widehat{S}_q)=\dim(\widehat{S}_q)-1$\em.

First, we may assume that $\sigma$ has dimension $n$. Let $W$ be the affine subspace generated by $\tau$ and $\pi:R^n\to W$ the orthogonal projection onto $W$; recall that by Lemma \ref{simplex}, $\ol{U}_{\tau,\veps}=\{x\in R^n:\,(\pi(x),\|x-\pi(x)\|)\in\widehat{\tau}_\veps\}$. Using the fact that $\ol{U}_{\tau,\veps}$ is a bounded and closed convex set,  we construct next \em a semialgebraic homeomorphism $g_1:R^n\to R^n$ such that $g_1(\ol{U}_{\tau,\veps})=\Bb_n(0,1)$ and all lines through the origin are kept invariant\em.

After a change of coordinates, we may assume that $p_\tau$ is the origin. For each $x\in R^n$ let $\ell_x$ be the half-line from the origin passing through $x$. Since $\ol{U}_{\tau,\veps}$ is a closed and bounded convex set and the origin is an interior point of $\ol{U}_{\tau,\veps}$, the set $\ell_x\cap \ol{U}_{\tau,\veps}$ is a (non trivial) closed bounded interval; denote the endpoint of $\ell_x\cap \ol{U}_{\tau,\veps}$ different from the origin with $f(x)$. It holds that $f(x)\in\partial \ol{U}_{\tau,\veps}$ by \cite[11.2.4]{ber1} and the map $\delta:R^n\setminus\{0\}\to[0,+\infty),\ x\mapsto \|f(x)\|$ is continuous by \cite[11.3.1.2]{ber1}; hence, it is semialgebraic. Observe $f(x)=\delta(x)\frac{x}{\|x\|}$ for all $x\in R^n\setminus\{0\}$ and that $\delta$ is constant on every half-line from the origin. Consider now the semialgebraic maps
$$
g_1:R^n\to R^n,\, x\mapsto\begin{cases}
0&\text{if $x=0$,}\\
\frac{x}{\delta(x)}&\text{if $x\neq0$}
\end{cases}\qquad\text{and}\qquad g_2:R^n\to R^n,\, x\mapsto\begin{cases}
0&\text{if $x=0$,}\\
x\delta(x)&\text{if $x\neq0$.}
\end{cases}
$$
The continuity of such maps follows from the following fact: Since $0$ is an interior point of $\ol{U}_{\tau,\veps}$ and $\partial \ol{U}_{\tau,\veps}$ is closed and bounded, there exist $M,m>0$ such that $m<\delta(x)<M$ for all $x\in R^n\setminus\{0\}$. A straightforward computation shows in addition that $g_1\circ g_2=g_2\circ g_1=\id_{R^n}$ and therefore both are semialgebraic homeomorphisms. Notice moreover that $g_1(\ol{U}_{\tau,\veps})=\Bb_n(0,1)$. 

After this preparation, we are ready to prove the statement. Fix a point $q\in\cl(S)\cap\ol{U}_{\tau,\veps}\setminus\partial\tau$. Let $H_1,\ldots,H_r$ be the hyperplanes of $R^n$ generated by those respective $(n-1)$-dimensional faces of $\sigma$ that contain $\tau$; clearly $W=H_1\cap\ldots\cap H_r$. Write $H_i:=\{h_i=0\}$ and assume $\sigma\subset\{h_1\geq0,\ldots,h_r\geq0\}$. Denote $T:=\{h_1>0,\ldots,h_r>0\}\setminus\ol{U}_{\tau,\veps}$ and $\widehat{T}:=\{h_1\geq0,\ldots,h_r\geq0\}\setminus\ol{U}_{\tau,\veps}$ and notice the following: Since $\ol{U}_{\tau,\veps}\setminus\partial\tau$ only meets the faces of $\sigma$ that contain $\tau$, we have \em $S_q=T_q$ and $\widehat{S}_q=\widehat{T}_q$ for each $q\in\cl(S)\cap(\ol{U}_{\tau,\veps}\setminus\partial\tau)$\em. 

On the other hand, as $g_1$ keeps all lines through the origin invariant, we conclude:
\begin{align*}
&T':=g_1(T)=\{h_1>0,\ldots,h_r>0\}\setminus\Bb_n(0,1),\\ 
&\widehat{T}':=g_1(\widehat{T})=\{h_1\geq0,\ldots,h_r\geq0\}\setminus\Bb_n(0,1),\\
&g_1(\cl(S)\cap(\ol{U}_{\tau,\veps}\setminus\partial\tau))\subset\partial(\Bb_n(0,1)\cap\{h_1\geq0,\ldots,h_r\geq0\}).
\end{align*}
For each point $z\in\partial(\Bb_n(0,1)\cap\{x_1\geq0,\ldots,x_r\geq0\})$ the germs $T'_z$ and $\widehat{T}'_z$ are semialgebraically connected and in addition $\dim(\cl(T')_z\setminus T'_z)=\dim(T'_z)-1$ and $\dim(\cl(\widehat{T}')_z\setminus\widehat{T}'_z)=\dim(\widehat{T}'_z)-1$. Of course, the same holds for the germs $S_q=T_q$ and $\widehat{S}_q=\widehat{T}_q$ for each $q\in\cl(S)\cap(\ol{U}_{\tau,\veps}\setminus\partial\tau)$, as required.
\end{proof}

\subsubsection{Separation of tubular neighborhoods of two different simplices}
We prove now that given two simplices whose intersection is a common face, we can find disjoint tubular neighborhoods for their relative interiors. We need a preliminary result concerning strict separation of simplices meeting just in a face that we include for the sake of completeness. Namely,

\begin{lem}[Strict separation of simplices]\label{sp}
Let $\tau_1,\tau_2\subset R^n$ be two simplices such that $\tau_1\cap\tau_2$ is a common face $\vartheta$. Then there exists a hyperplane $H:=\{h=0\}$ of $R^n$ such that $\tau_i\cap H=\vartheta$ and $\tau_i\setminus\vartheta\subset\{(-1)^ih>0\}$.
\end{lem}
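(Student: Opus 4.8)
The plan is to reduce the statement to a purely linear‑algebraic separation problem at the level of the vertices, and then recover the conclusion for the whole simplices via barycentric coordinates. If $\vartheta=\varnothing$, then $\tau_1$ and $\tau_2$ are disjoint compact convex sets and a standard strict separation gives a hyperplane $\{h=0\}$ with $\tau_i\subset\{(-1)^ih>0\}$, so we may assume $\vartheta\neq\varnothing$. Recall that the faces of a simplex are exactly the convex hulls of subsets of its vertex set; hence the vertices $v_1,\dots,v_k$ of $\vartheta$ are among the vertices of $\tau_1$ (say $\tau_1=\mathrm{conv}\{v_1,\dots,v_k,a_1,\dots,a_p\}$) and among those of $\tau_2$ (say $\tau_2=\mathrm{conv}\{v_1,\dots,v_k,b_1,\dots,b_q\}$), with $\{a_j\}\cap\{b_l\}=\varnothing$ because $\tau_1\cap\tau_2=\vartheta$. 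It suffices to produce an \emph{affine} functional $h\colon R^n\to R$ with $h(v_i)=0$ for all $i$, $h(a_j)<0$ for all $j$ and $h(b_l)>0$ for all $l$: writing $x\in\tau_1$ in barycentric coordinates $x=\sum_i\lambda_iv_i+\sum_j\mu_ja_j$ with $\lambda_i,\mu_j\geq0$, $\sum_i\lambda_i+\sum_j\mu_j=1$, affinity of $h$ gives $h(x)=\sum_j\mu_jh(a_j)\leq0$, with equality if and only if all $\mu_j=0$, i.e.\ if and only if $x\in\vartheta$; likewise $h\geq0$ on $\tau_2$ with zero set exactly $\vartheta$. Thus $H:=\{h=0\}$ is a hyperplane (it is non‑constant as soon as $\tau_1\neq\tau_2$, since then some $a_j$ or $b_l$ exists) with $\tau_i\cap H=\vartheta$ and $\tau_i\setminus\vartheta\subset\{(-1)^ih>0\}$, after replacing $h$ by $-h$ if necessary.

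To construct $h$, translate coordinates so that a relative interior point of $\vartheta$ is the origin; then $V_0:=\mathrm{aff}(\vartheta)=\mathrm{span}_R\{v_1,\dots,v_k\}$ is a \emph{linear} subspace of $R^n$. Let $\pi\colon R^n\to R^n/V_0$ be the quotient map. An affine functional vanishing on $V_0$ is precisely one of the form $\bar h\circ\pi$ with $\bar h$ linear on $R^n/V_0$, so it is enough to find a linear $\bar h$ on $R^n/V_0$ with $\bar h(\pi(a_j))<0$ and $\bar h(\pi(b_l))>0$ for all $j,l$; by the standard separation of a point from a finite set, such a $\bar h$ exists once we know that $0\notin\mathrm{conv}\bigl(\{\pi(a_j)\}_j\cup\{-\pi(b_l)\}_l\bigr)$.

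The heart of the proof is this last disjointness, and it is the step where the hypothesis $\tau_1\cap\tau_2=\vartheta$ is genuinely used. First I would note that $\pi(a_1),\dots,\pi(a_p)$ are linearly independent: a relation $\sum_jc_j\pi(a_j)=0$ lifts to $\sum_jc_ja_j\in V_0=\mathrm{span}\{v_i\}$, and combined with the affine independence of the vertices $v_1,\dots,v_k,a_1,\dots,a_p$ of $\tau_1$ this forces all $c_j=0$; symmetrically the $\pi(b_l)$ are independent, so in particular $0\notin\mathrm{conv}\{\pi(a_j)\}$ and $0\notin\mathrm{conv}\{\pi(b_l)\}$. Now suppose $\sum_jt_j\pi(a_j)=\sum_ls_l\pi(b_l)$ with $t_j,s_l\geq0$ and $\sum_jt_j+\sum_ls_l=1$. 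Lifting, $\sum_jt_ja_j-\sum_ls_lb_l\in V_0$, so we may write $\sum_jt_ja_j+\sum_i\rho_i^-v_i=\sum_ls_lb_l+\sum_i\rho_i^+v_i=:P$ with $\rho_i^{\pm}\geq0$. The left‑hand expression exhibits $P$ as a non‑negative combination of the vertices of $\tau_1$ with coefficient sum $\sigma$, the right‑hand one as such a combination of the vertices of $\tau_2$ with coefficient sum $\sigma'$; if $0<\sigma\leq\sigma'$ then $P/\sigma\in\tau_1$, $P/\sigma'\in\tau_2$, and since $P/\sigma'=(\sigma/\sigma')(P/\sigma)$ lies on the segment joining $0\in\tau_1$ to $P/\sigma\in\tau_1$, convexity gives $P/\sigma'\in\tau_1\cap\tau_2=\vartheta\subset V_0$, whence $P\in V_0$ (the cases $\sigma=0$ or $\sigma'=0$ are immediate). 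Then $\sum_jt_ja_j=P-\sum_i\rho_i^-v_i\in V_0$ forces $\sum_jt_j\pi(a_j)=0$, so all $t_j=0$ by independence, and symmetrically all $s_l=0$, contradicting $\sum_jt_j+\sum_ls_l=1$. This yields the required $\bar h$, hence $h$, and closes the argument.

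The step I expect to be the main obstacle is precisely this convexity computation showing $P\in V_0$ (equivalently, that the cones spanned by the $\pi(a_j)$ and by the $\pi(b_l)$ meet only at $0$), since it is the only place where the hypothesis is exploited in an essential way; the rest is routine bookkeeping with barycentric coordinates and elementary convexity. The degenerate case $\tau_1=\tau_2$, which does not occur in the intended applications, would require $\dim\vartheta<n$ and is then trivial (take any hyperplane through $\vartheta$).
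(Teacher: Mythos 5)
Your proof is correct, and it takes a route that genuinely differs from the paper's in its second half. Both arguments begin by factoring out the common face: the paper normalizes coordinates so that $\vartheta=\operatorname{conv}\{0,e_1,\dots,e_d\}$ and projects $R^n\to R^{n-d}$, while you quotient by $V_0=\operatorname{aff}(\vartheta)$; these are the same reduction in different clothing, and both verify the key disjointness there. The divergence is in how the reduced (common-vertex) problem is then solved. The paper constructs explicit disjoint open \emph{convex} cones $A_1',A_2'$ around $\tau_i\setminus\{0\}$ by thickening $T_i$ on the unit sphere and then shrinking to convexify, and invokes Hahn--Banach to separate them; the hypothesis is used in an ad hoc computation producing a point $q\in\tau_1\cap\tau_2$ to show $\tau_1'\cap\tau_2'=\{0\}$. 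You instead reduce to showing that the convex hull of the finitely many image points $\{\pi(a_j)\}_j\cup\{-\pi(b_l)\}_l$ avoids the origin, verify this by a barycentric-coordinate argument built around the common point $P$ (this is exactly where the hypothesis $\tau_1\cap\tau_2=\vartheta$ enters), and then invoke only the elementary separation of a point from a finite polytope. Your version is more economical: no sphere neighborhoods, no convexification step, and the separation ingredient is weaker than Hahn--Banach; it also delivers the affine functional $h$ vanishing on all of $\operatorname{aff}(\vartheta)$ in one stroke, whereas the paper has to re-lift the separating hyperplane through $\pi^{-1}$. The paper's construction is more visibly geometric and aligns with its figure. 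One small caveat, which you flag and which the paper also leaves implicit: the statement as written allows $\tau_1=\tau_2=\vartheta$, in which case a hyperplane exists only when $\dim\vartheta<n$; this degenerate case never occurs in the application (Lemma \ref{smallint} has $\vartheta\subset\partial\tau_i$), so both proofs are sound where they are used.
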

\begin{proof}
We analyze the case that \em $\tau_1\cap\tau_2$ is a common vertex of the simplices $\tau_1$ and $\tau_2$ \em first.

Indeed, we may assume $\dim(\tau_1)=n$ and $\dim(\tau_2)=m\leq n$; in fact, we suppose $\tau_1$ is the simplex whose vertices are the origin ${\bf0}$ and points $e_i:=(0,\ldots,0,1^{(i)},0,\ldots,0)$ whose coordinates are all zero except for the $i$th, which is equal to one. After reordering the vertices of $\tau_1$, we assume that $\vartheta$ is the simplex of vertices ${\bf0},e_1,\ldots,e_d$. Let $u_{d+1},\ldots,u_m$ be the remaining vertices of $\tau_2$ and consider the projection 
$$
\pi:R^n\to R^{n-d},\ x:=(x_1,\ldots,x_n)\mapsto x':=(x_{d+1},\ldots,x_n).
$$
Let $\tau_i':=\pi(\tau_i)$, denote $e_k':=\pi(e_k)$ for $k=d+1,\ldots,n$ and $u_j':=\pi(u_j)$ for $j=d+1,\ldots,n$. Of course, $\tau_1'$ is the simplex of vertices ${\bf0},e_{d+1}',\ldots,e_n'$ and $\tau_2'$ is the simplex of vertices ${\bf0},u_{d+1}',\ldots,u_m'$. Let us check that: $\tau_1'\cap\tau_2'=\{{\bf0}\}$. 

Pick a point $p'\in\tau_1'\cap\tau_2'$ and write 
$$
(p_{d+1},\ldots,p_n)=p'=p_{d+1}e_{d+1}'+\cdots+p_ne_n'=\mu_{d+1}u_{d+1}'+\cdots+\mu_mu_m'
$$
where $p_k,\mu_j\geq0$, $\sum_{k=d+1}^np_k\leq1$ and $\sum_{j=d+1}^m\mu_j\leq1$. Denote $p:=(p_1,\ldots,p_n):=\sum_{j=d+1}^m\mu_ju_j$ and let $q:=(q_1,\ldots,q_n):=\frac{1}{2n(\|p\|+1)}p+\frac{1}{2n}\sum_{\ell=1}^de_{\ell}$. Notice that 
$$
q_\ell=\begin{cases}
\frac{1}{2n}(\frac{p_\ell}{(\|p\|+1)}+1)\geq0&\text{ for $\ell=1,\ldots,d$},\\ 
p_\ell\frac{1}{2n(\|p\|+1)}\geq0&\text{ for $\ell=d+1,\ldots,n$}
\end{cases}\quad\text{and}\quad\sum_{\ell=1}^nq_\ell=\frac{d}{2n}+\frac{1}{2n}\sum_{\ell=1}^n\frac{p_\ell}{(\|p\|+1)}<1,
$$
so $q\in\tau_1$. On the other hand,
$$
q=\sum_{\ell=1}^d\frac{1}{2n}e_{\ell}+\sum_{j=d+1}^n\frac{1}{2n(\|p\|+1)}\mu_ju_j\quad\text{and}\quad \frac{d}{2n}+\sum_{j=d+1}^n\frac{\mu_j}{2n(\|p\|+1)}\leq1
$$
so $q\in\tau_2$. As $\tau_1\cap\tau_2=\vartheta$, we deduce $q\in\vartheta$ and so $\mu_j=0$ for $j=d+1,\ldots,n$; hence, $p'={\bf0}$, that is, $\tau_1'\cap\tau_2'=\{{\bf0}\}$.

Therefore it is enough to separate $\tau_1'$ and $\tau_2'$ by a hyperplane of $R^{n-d}$ and we can assume from the beginning that $\vartheta=\tau_1\cap\tau_2$ is just a vertex (the origin). Let us construct two disjoint open convex neighborhoods $A_1$ and $A_2$ of $\tau_1\setminus\{0\}$ and $\tau_2\setminus\{0\}$. Then we have reduced everything to the separation of these two sets using Hahn-Banach's Theorem \cite[11.4.5]{ber1}.

\noindent\begin{minipage}[t]{0.555 \textwidth}
Indeed, let $\eta_i$ be the face of $\tau_i$ such that $\tau_i$ is the bounded cone of basis $\eta_i$ and vertex ${\bf0}$. Consider the infinite convex cone 
$$
C_i:=\{tx:\,x\in\tau_i, t\in[0,+\infty)\}
$$ 
and the closed semialgebraic set $T_i:=\{\frac{x}{\|x\|}:\, x\in\tau_i\}$ contained in the sphere $S^n:=\{x\in R^n:\,\|x\|=1\}$. Notice that $T_1\cap T_2=\varnothing$ and $C_i=\{ty:\, y\in T_i\}$. Let $V_1,V_2$ be two disjoint open subsets of $S^n$ such that $T_i\subset V_i$. Notice that 
$$
C_i\setminus\{{\bf0}\}\subset A_i:=\{tx:\,x\in V_i, t\in(0,+\infty)\}
$$
and $A_1\cap A_2=\varnothing$. After shrinking $A_i$ if necessary, we may assume that $A_i$ is an infinite convex cone: Define $\delta_i:=\dist(\eta_i,R^n\setminus A_i)$, which is strictly positive because $\eta_i$ is closed and bounded, $R^n\setminus A_i$ is closed and $\eta_i\cap(R^n\setminus A_i)=\varnothing$. 
\end{minipage}
\hfill 
\begin{minipage}[t]{0.44\textwidth}
\centering
\raisebox{-15.5em}{\begin{tikzpicture}
\draw[fill=black!30!white,draw=none,opacity=0.5](1,2) -- (1,6) -- (6,6) -- (1,2);
\draw[fill=black!40!white,draw=none,opacity=0.5](1,2) -- (1.5,6) -- (5.5,6) -- (1,2);
\draw[fill=black!30!white,draw=none,opacity=0.5](1,2) -- (6,3) -- (6,0) -- (4,0) -- (1,2);
\draw[fill=black!40!white,draw=none,opacity=0.5](1,2) -- (6,2.5) -- (6,0) -- (4.5,0) -- (1,2);
\draw[fill=black!40!white,draw=none](1,2) -- (1.8,6) -- (5,6) -- (1,2);
\draw[fill=black!40!white,draw=none](1,2) -- (5,0) -- (6,0) -- (6,2) -- (1,2);
\draw[fill=black!60!white,very thick](1,2) -- (3.5,4.5) -- (1.5,4.5) -- (1,2);
\draw[fill=black!60!white,very thick](1,2) -- (4,2) -- (4,0.5) -- (1,2);
\draw (1,2) -- (1.8,6);
\draw (1,2) -- (5,6);
\draw (1,2) -- (5,0);
\draw (1,2) -- (6,2);
\draw[dashed] (1,2) -- (1,6);
\draw[dashed] (1,2) -- (6,6);
\draw[dashed] (1,2) -- (1.5,6);
\draw[dashed] (1,2) -- (5.5,6);
\draw[dashed] (1,2) -- (6,2.5);
\draw[dashed] (1,2) -- (4.5,0);
\draw[dashed] (1,2) -- (6,3);
\draw[dashed] (1,2) -- (4,0);
\draw[dashed,thick](1,0.5) arc (270:460:1.5cm);
\draw[very thick](2.060660172,3.060660172) arc (45:79:1.5cm);
\draw[very thick](2.5,2) arc (0:-26:1.5cm);

\draw[ultra thick] (0,1.5) -- (6,4.5);

\draw (1.55,3) node{$T_1$};
\draw (2.125,3.75) node{$\tau_1$};
\draw (2.5,4.75) node{$\eta_1$};
\draw (3,5.5) node{$C_1$};
\draw (0.65,5.5) node{$A_1$};
\draw (0.5,4.75) node{$A_1'$};
\draw[->,thick] (0.75,4.75) -- (1.5,4.75);

\draw (5,4.5) node{$H$};

\draw (2.1,1.75) node{$T_2$};
\draw (3.5,1.35) node{$\tau_2$};
\draw (4.35,1.35) node{$\eta_2$};
\draw (5.25,0.9) node{$C_2$};
\draw (3.25,0.15) node{$A_2$};
\draw (2.5,0.5) node{$A_2'$};
\draw[->,thick] (2.75,0.5) -- (3.75,0.5);

\draw (1.25,0.9) node{$S^n$};

\draw (1,2) node{\scriptsize$\displaystyle\bullet$};
\end{tikzpicture}}

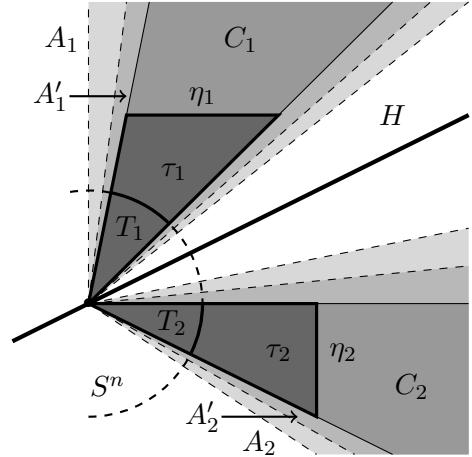
\captionof{figure}{Separation technique}
\end{minipage}

\noindent We claim: \em The open semialgebraic set $W_i:=\{x\in R^n:\,\dist(x,\eta_i)<\frac{\delta_i}{2}\}\subset A_i$ is convex\em.

Indeed, if $x,y\in W_i$, there exist $x_0,y_0\in\eta_i$ such that $\|x-x_0\|<\frac{\delta_i}{2}$ and $\|y-y_0\|<\frac{\delta_i}{2}$. Fix $\lambda\in[0,1]$ and let us see that $\lambda x+(1-\lambda)y\in W_i$. As $\eta_i$ is convex, $\lambda x_0+(1-\lambda)y_0\in\eta_i$. Moreover,
$$
\|\lambda x+(1-\lambda)y-(\lambda x_0+(1-\lambda)y_0)\|\leq\lambda\|x-x_0\|+(1-\lambda)\|y-y_0\|<\tfrac{\delta_i}{2};
$$
hence, $\lambda x+(1-\lambda)y\in W_i$, that is, $W_i$ is convex.

Notice that the open convex sets $A_i':=\{tx:\,x\in W_i,\ t\in(0,+\infty)\}\subset A_i$ contain $C_i\setminus\{{\bf0}\}$ and so $\tau_i\setminus\{0\}$, as required.
\end{proof}

\begin{lem}[Small intersection]\label{smallint}
Let $\tau_1,\tau_2\subset R^n$ be two simplices such that the intersection $\vartheta:=\tau_1\cap\tau_2=\partial\tau_1\cap\partial\tau_2$ is either empty or a common face. Then $\ol{U}_{\tau_1,\veps}\cap\ol{U}_{\tau_2,\veps}=\vartheta$ and in particular $U_{\tau_1,\veps}\cap U_{\tau_2,\veps}=\varnothing$ for $\veps>0$ small enough.
\end{lem}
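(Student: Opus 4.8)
The plan is to establish the inclusion $\ol{U}_{\tau_1,\veps}\cap\ol{U}_{\tau_2,\veps}\subseteq\vartheta$ for all sufficiently small $\veps>0$; the reverse inclusion is immediate, since $\vartheta\subseteq\tau_i$ gives $\dist(p,\tau_i)=0$ for each $p\in\vartheta$, and the assertion about the open sets then follows because $\vartheta\subseteq\partial\tau_i$ (the hypothesis forces $\vartheta$ to be a proper face of each $\tau_i$) makes $\dist(p,\partial\tau_i)=0$ as well, so no point of $\vartheta$ satisfies the strict inequality defining $U_{\tau_i,\veps}$. Throughout I use the isometry-invariant reformulation $\ol{U}_{\tau,\veps}=\{p\in R^n:\,\dist(p,\tau)\le\veps\dist(p,\partial\tau)\}$, and I may assume both $\tau_i$ are positive-dimensional, the remaining cases being trivial. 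If $\vartheta=\varnothing$, then $\tau_1,\tau_2$ are disjoint compact convex sets, so there is an affine function $h$ with $\|\grad h\|=1$, $h<0$ on $\tau_1$ and $h>0$ on $\tau_2$; by compactness $\max_{\tau_1}h<0<\min_{\tau_2}h$, while for $p\in\ol{U}_{\tau,\veps}$, writing $q$ for the nearest point of $\tau$ to $p$, the triangle inequality gives $\|p-q\|=\dist(p,\tau)\le\veps\dist(p,\partial\tau)\le\veps(\|p-q\|+\operatorname{diam}\tau)$, hence $\|p-q\|\le\tfrac{\veps}{1-\veps}\operatorname{diam}\tau$, which tends to $0$ with $\veps$; thus for small $\veps$ one has $h<0$ on $\ol{U}_{\tau_1,\veps}$ and $h>0$ on $\ol{U}_{\tau_2,\veps}$, so the tubes are disjoint.

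Assume now $\vartheta$ is a common (necessarily proper) face. By Lemma \ref{sp} fix an affine $h$ with $\|\grad h\|=1$ such that $\tau_1\subseteq\{h\le0\}$, $\tau_2\subseteq\{h\ge0\}$ and $\tau_i\cap\{h=0\}=\vartheta$, so $\{h=0\}$ supports each $\tau_i$ along $\vartheta$. The key quantitative ingredient is a transversality estimate: \emph{for a positive-dimensional simplex $\tau$ with proper face $\vartheta=\tau\cap\{h=0\}$ and $\tau\subseteq\{h\le0\}$, there is a constant $C=C(\tau,h)>0$ with $\dist(q,\vartheta)\le C\dist(q,\{h=0\})=C(-h(q))$ for all $q\in\tau$.} I would prove it in barycentric coordinates: the vertices of $\tau$ split into the vertices $v$ of $\vartheta$ (on which $h=0$) and the remaining vertices $w$ (on which $h<0$, so $\beta:=\min_w(-h(w))>0$); writing $q=\sum_v\lambda_v v+\sum_w\mu_w w$ and $s:=\sum_w\mu_w$, affinity of $h$ yields $-h(q)=\sum_w\mu_w(-h(w))\ge\beta s$, and $\dist(q,\vartheta)\le s\operatorname{diam}\tau$ (for $s<1$ compare $q$ with $\hat q:=\tfrac1{1-s}\sum_v\lambda_v v\in\vartheta$ via $q-\hat q=\sum_w\mu_w(w-\hat q)$; for $s=1$ bound by the distance to a vertex of $\vartheta$), whence $C:=\operatorname{diam}(\tau)/\beta$ works. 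Let $C_1,C_2$ be the constants so obtained for $\tau_1,\tau_2$.

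Fix $\veps<\min\{1/(C_1+1),\,1/(C_2+1)\}$ and let $p\in\ol{U}_{\tau_1,\veps}\cap\ol{U}_{\tau_2,\veps}$. Let $q_1$ be the nearest point of $\tau_1$ to $p$; by the triangle inequality $\|p-q_1\|=\dist(p,\tau_1)\le\veps\dist(p,\partial\tau_1)\le\veps(\|p-q_1\|+\dist(q_1,\partial\tau_1))$, so $\|p-q_1\|\le\tfrac{\veps}{1-\veps}\dist(q_1,\partial\tau_1)$. If $q_1\in\partial\tau_1$ this forces $p=q_1\in\tau_1\subseteq\{h\le0\}$. If $q_1$ lies in the relative interior $\tau_1^0$ (so $\dist(q_1,\partial\tau_1)>0$), then, using that $h$ is $1$-Lipschitz and that $\dist(q_1,\partial\tau_1)\le\dist(q_1,\vartheta)$ (as $\vartheta\subseteq\partial\tau_1$),
\[
h(p)\le h(q_1)+\|p-q_1\|\le-\tfrac1{C_1}\dist(q_1,\vartheta)+\tfrac{\veps}{1-\veps}\dist(q_1,\partial\tau_1)\le\Big(\tfrac{\veps}{1-\veps}-\tfrac1{C_1}\Big)\dist(q_1,\partial\tau_1)<0 .
\]
In either case $h(p)\le0$; running the same argument with $\tau_2$ on the side $\{h\ge0\}$ gives $h(p)\ge0$. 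Hence $h(p)=0$, which rules out the strict alternative $h(p)<0$ for $\tau_1$, so $q_1\in\partial\tau_1$ and $p\in\partial\tau_1$, and symmetrically $p\in\partial\tau_2$; therefore $p\in\partial\tau_1\cap\partial\tau_2=\vartheta$, as required.

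The step I expect to be the main obstacle is the transversality estimate of the second paragraph: it is precisely what gives the phrase ``for $\veps$ small enough'' its content, and it has to be arranged so that the constants $C_i$ are extracted uniformly over $\tau_i$ (here via barycentric coordinates and the fact that $\{h=0\}$ is a supporting hyperplane along the face). Everything else is careful but routine bookkeeping with the triangle inequality, together with the dichotomy according to whether the foot of the shortest segment from $p$ to $\tau_i$ lands in $\tau_i^0$ or in $\partial\tau_i$.
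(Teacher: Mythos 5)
Your proof is correct, and it takes a genuinely different route from the paper's. Both arguments start from Lemma \ref{sp}, which furnishes a hyperplane $H=\{h=0\}$ with $\tau_i\cap H=\vartheta$ and $\tau_i\setminus\vartheta$ strictly on the side $\{(-1)^ih>0\}$. At that point the paper reaches back to Lemma \ref{simplex} and uses the explicit cone description of $\ol{U}_{\tau_i,\veps}$: it is the union of the simplices with base $\tau_i$ and apex ranging over the sphere $\Vv_i$ of radius $\veps^*\dist(p_{\tau_i},\partial\tau_i)$ about the incenter $p_{\tau_i}$ in $p_{\tau_i}+L_i^\perp$; for $\veps$ small that sphere sits inside $\{(-1)^ih>0\}$, and since $\{(-1)^ih\ge0\}$ is convex and contains $\tau_i$, every such cone except for $\vartheta$ lies in $\{(-1)^ih>0\}$. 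Your argument, by contrast, works directly from the defining metric inequality $\dist(p,\tau)\le\veps\dist(p,\partial\tau)$: you first bound $\|p-q_1\|$ by $\tfrac{\veps}{1-\veps}\dist(q_1,\partial\tau_1)$ and then feed that into a $1$-Lipschitz bound on $h$, using a barycentric transversality estimate $\dist(q,\vartheta)\le C(-h(q))$ on $\tau_1$ as the quantitative substitute for the cone picture. This makes your proof self-contained with respect to Lemma \ref{simplex} and Corollary \ref{intsimplex0} (it only needs the isometry-invariant description of $\ol{U}_{\tau,\veps}$, which is valid because $\veps<1$ forces $\pi(p)\in\tau$), at the price of the extra transversality lemma. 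Both are sound; the paper's is shorter given the machinery already developed, yours is more elementary and portable. One minor point: as you observe, the lemma as stated should be read with both $\tau_i$ positive-dimensional (otherwise $\partial\tau_i=\varnothing$ and $\ol{U}_{\tau_i,\veps}=R^n$); the paper only ever applies it in that setting.
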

\begin{proof}
Let $W_i\subset R^n$ be the affine subspace generated by $\tau_i$ and $\pi_i:R^n\to R^n$ the orthogonal projection onto $W_i$. By Lemma \ref{simplex} we know
\begin{equation}\label{rot}
\ol{U}_{\tau_i,\veps}=\{p\in R^n:\,(\pi({\tt p}),\|p-\pi({\tt p})\|)\in\widehat{\tau}_{i,\veps}\}
\end{equation}
where $\widehat{\tau}_{i,\veps}\subset W_i\times R$ is the $(d+1)$-dimensional simplex obtained as the (convex) cone whose basis is $\tau_i\times\{0\}$ and whose vertex is the point $v_{\tau_i}:=(p_{\tau_i},\veps^*\dist(p_{\tau_i},\partial\tau_i))$, see Lemma \ref{simplex}.

By Lemma \ref{sp} there exists a hyperplane $H:=\{h=0\}$ of $R^n$ such that $\tau_i\cap H=\vartheta$ and $\tau_i\setminus\vartheta\subset\{(-1)^ih>0\}$. Thus, $(-1)^ih(p_{\tau_i})>0$ and we pick $\veps_0>0$ such that $\Bb_n(p_{\tau_i},\veps_0)\subset\{(-1)^ih>0\}$. Choose $\veps>0$ small enough such that $\veps^*\dist(p_{\tau_i},\partial\tau_i)<\veps_0$ (see \S\ref{ctn01}). Recall that $\ol{U}_{\tau_i,\veps}$ is by \eqref{rot} the union of all cones of basis $\tau_i$ and vertices contained in the set 
$$
\Vv_i:=\{q\in R^n:\,\pi_i(q)=p_{\tau_i}\ \text{and}\ \|q-p_{\tau_i}\|=\veps^*\dist(p_{\tau_i},\partial\tau_i)\}.
$$
As $\tau_i\setminus\vartheta$ and $\Vv_i\subset\{(-1)^ih>0\}$, we conclude $\ol{U}_{\tau_i,\veps}\setminus\vartheta\subset\{(-1)^ih>0\}$ and so $\ol{U}_{\tau_1,\veps}\cap\ol{U}_{\tau_2,\veps}=\vartheta$, as required.
\end{proof}

\subsection{Proof of Theorem \ref{se0} in the general case}
We proceed by induction on the dimension of the semialgebraic set $\eta(M)$. 

\noindent{\bf Step 1.} {\em First step and formulation of induction hypothesis.} The first step of induction $\dim(\eta(M))=0$ follows from the case when $\eta(M)$ is closed. Assume that the result is true if $\dim(\eta(M))\leq d-1$ and let us check that it holds if $\dim(\eta(M))=d$. 

\noindent{\bf Step 2.} {\em Reduction of the problem to the piecewise linear case.} Let $(K,\Phi)$ be a semialgebraic triangulation of $\cl(M)$ compatible with $M$, $\cl(M)\setminus M$ and $\eta(M)$ where $K$ is a finite simplicial complex and $\Phi:|K|\to\cl(M)$ is a semialgebraic homeomorphism. To simplify notations, we identify $\cl(M)$ with $|K|$, $M$ with $\Phi^{-1}(M)$ and $\eta(M)$ with $\Phi^{-1}(\eta(M))$. Let $\tau_1,\ldots,\tau_s$ be the simplices of $K$ of dimension $d$ such that $\tau_i^0$ is contained in $\eta(M)$; observe that $\eta(M)':=\eta(M)\setminus\bigcup_{i=1}^s\tau_i^0$ has dimension $\leq d-1$. 

\noindent{\bf Step 3.} {\em Local construction of the appropriate embedding.} 
For each $i=1,\ldots,s$ denote the affine subspace generated by $\tau_i$ with $L_i$ and the orthogonal projection onto $L_i$ with $\pi_i:R^n\to L_i$. We use freely that $\dist(x,\tau_i)=\|x-\pi_i(x)\|$ if $x\in\pi_i^{-1}(\tau_i)$. To simplify the forthcoming work, we prove the following identity
\begin{equation}\label{clue0}
\dist(x,\partial\tau_i)^2=\|x-\pi_i(x)\|^2+\dist(\pi_i(x),\partial\tau_i)^2\qquad\forall x\in\pi_i^{-1}(\tau_i).
\end{equation}

Indeed, let $y,z\in\partial\tau_i\subset L_i$ such that $\dist(x,\partial\tau_i)=\|x-y\|$ and $\dist(\pi_i(x),\partial\tau_i)=\|\pi_i(x)-z\|$. Then, using Pythagoras Theorem,
\begin{multline*}
\dist(x,\partial\tau_i)^2=\|x-\pi_i(x)\|^2+\|\pi_i(x)-y\|^2\geq\|x-\pi_i(x)\|^2+\dist(\pi_i(x),\partial\tau_i)^2\\
=\|x-\pi_i(x)\|^2+\|\pi_i(x)-z\|^2=\|x-z\|^2\geq\dist(x,\partial\tau_i)^2
\end{multline*}
and equality \eqref{clue0} holds. 

We deduce furthermore that for each $0<\veps<1$ and $x\in\pi_i^{-1}(\tau_i)$ the equivalence
\begin{equation}\label{clue00}
\dist(x,\tau_i)\leq\veps\dist(x,\partial\tau_i)\ \iff\ \|x-\pi_i(x)\|\leq\veps^*\dist(\pi_i(x),\partial\tau_i)
\end{equation}
holds where $\veps^*:=\frac{\veps}{\sqrt{1-\veps^2}}$. Rewrite the semialgebraic neighborhoods of $\tau_i^0$ provided in Lemma \ref{simplex}, by means of \eqref{clue00}, as follows 
\begin{align*}
&U_{i,\veps}:=U_{\tau_i,\veps}=\{x\in\pi_i^{-1}(\tau_i):\,\|x-\pi_i(x)\|<\veps^*\dist(\pi_i(x),\partial\tau_i)\},\\
&\ol{U}_{i,\veps}:=\ol{U}_{\tau_i,\veps}=\{x\in\pi_i^{-1}(\tau_i):\,\|x-\pi_i(x)\|\leq\veps^*\dist(\pi_i(x),\partial\tau_i)\}.
\end{align*}
By Corollary \ref{smallveps} and Lemma \ref{smallint} we may choose $\veps>0$ such that $\ol{U}_{i,\veps}\setminus\partial\tau_i$ only meets the simplices of $K$ that contain $\tau_i$ and $\ol{U}_{i,\veps}\cap\ol{U}_{j,\veps}=\tau_i\cap\tau_j$ if $i\neq j$. 

Now define $M_\delta:=M\setminus\bigcup_{i=1}^s(\ol{U}_{i,\delta}\setminus\partial\tau_i)$ for each $\delta>0$ and observe that $\bigcup_{i=1}^s(\ol{U}_{i,\delta}\setminus\partial\tau_i)$ is a (tubular) neighborhood of $\bigcup_{i=1}^s\tau_i^0$. Consider the semialgebraic maps
$$
\begin{array}{rll}
g_{i}:V_i:=\ol{U}_{i,\veps}\setminus\tau_i^0&\!\!\!\to\!\!\!&W_i:=\ol{U}_{i,\veps}\setminus(\ol{U}_{i,\frac{\veps}{2}}\setminus\partial\tau_i)\\
x&\!\!\!\mapsto\!\!\!&
\begin{cases}
\pi_i(x)+\frac{x-\pi_i(x)}{\|x-\pi_i(x)\|}(a_1\dist(\pi_i(x),\partial\tau_i)+a_2\|x-\pi_i(x)\|)&\text{if $x\not\in\partial\tau_i$,}\\
x&\text{if $x\in\partial\tau_i$,}
\end{cases}
\end{array}
$$
$$
\begin{array}{rll}
h_{i}:\ol{W}_i:=\ol{U}_{i,\veps}\setminus U_{i,\frac{\veps}{2}}&\!\!\!\to\!\!\!&\ol{V}_i:=\ol{U}_{i,\veps}\\
x&\!\!\!\mapsto\!\!\!&
\begin{cases}
\pi_i(x)+\frac{x-\pi_i(x)}{\|x-\pi_i(x)\|}(b_1\|x-\pi_i(x)\|+b_2\dist(\pi_i(x),\partial\tau_i))&\text{if $x\not\in\partial\tau_i$,}\\
x&\text{if $x\in\partial\tau_i$}
\end{cases}
\end{array}
$$
where $(a_1,a_2),(b_1,b_2)\in R^2$ are the respective solutions of the system of linear equations
$$
\left\{\begin{array}{l}
a_1=(\frac{\veps}{2})^*,\\[4pt]
a_1+\veps^*a_2=\veps^*
\end{array}\right|\leadsto
\left\{\begin{array}{l}
a_1=(\frac{\veps}{2})^*,\\[4pt]
a_2=\frac{\veps^*-(\frac{\veps}{2})^*}{\veps^*}
\end{array}\right.
\ \text{and}\ 
\left\{\begin{array}{l}
(\frac{\veps}{2})^*b_1+b_2=0,\\[4pt]
\veps^*b_1+b_2=\veps^*,
\end{array}\right|\leadsto
\left\{\begin{array}{l}
b_1=\frac{\veps^*}{\veps^*-(\frac{\veps}{2})^*},\\[4pt]
b_2=-(\frac{\veps}{2})^*\frac{\veps^*}{\veps^*-(\frac{\veps}{2})^*}.
\end{array}\right.
$$
Let us motivate the formulas of $g_i$ and $h_i$. Consider the segment $\Ss:=\{\lambda y+(1-\lambda)\pi_i(y):\,\lambda\in(0,1]\}$ that connects a point $y\in\pi_i^{-1}(\tau_i)$ such that $\|y-\pi_i(y)\|=\veps^*\dist(\pi_i(y),\partial\tau_i)$ with the point $\pi_i(y)$. Define also the segment $\Ss':=\{\lambda y+(1-\lambda) \pi_i(y):\,\lambda\in(\mu,1]\}$ that connects $y$ with the point $z\in\Ss$ such that $\|z-\pi_i(z)\|=(\frac{\veps}{2})^*\dist(\pi_i(z),\partial\tau_i)$. A straightforward computation shows that $\mu=\frac{(\frac{\veps}{2})^*}{\veps^*}$. The semialgebraic map $g_i$ arises when one `linearly' transforms the segment $\Ss$ onto $\Ss'$. The semialgebraic map $h_i$ appears to perform reversely. 

The continuity of $g_i$ and $h_i$ at $\partial\tau_i$ requires further comments. We analyze as an example what happens with $h_i$, as the behavior of $g_i$ is analogous. Pick a point $y\in\partial\tau_i$ and let $x\in\ol{W_i}\setminus\partial\tau_i$ be close to $y$. Then
\begin{equation*}
\begin{split}
\|h_i(x)-h_i(y)\|&=\|h_i(x)-y\|\leq\|h_i(x)-\pi_i(x)\|+\|\pi_i(x)-y\|\\
&=|b_2\dist(\pi_i(x),\partial\tau_i)+b_1\|x-\pi_i(x)\||+\|\pi_i(x)-y\|\\
&\leq (|b_2|+1)\|\pi_i(x)-y\|+b_1(\|x-y\|+\|y-\pi_i(x)\|)\\
&\leq (2b_1+|b_2|+1)\|x-y\|,
\end{split}
\end{equation*}
so $h_i$ is continuous at $y$.

Using the identities $a_1+a_2b_2=0$ and $a_2b_1=1$, straightforward computations show
$$
g_{i}(V_i)=W_i,\ h_{i}(\ol{W}_i)=\ol{V}_i,\ g_{i}\circ h_{i}|_{W_i}=\id_{W_i}\quad\text{and}\quad(h_{i}|_{W_i})\circ g_{i}=\id_{V_i},
$$
so both $h_i|_{W_i}$ and $g_i$ are semialgebraic homeomorphisms. Moreover, by Lemma \ref{intsimplex} and taking into account that $g_i$ and $h_i$ are invariant over the segments (of suitable length) orthogonal to $\tau_i$, it holds $g_{i}(\sigma\cap V_i)=\sigma\cap W_i$ and $h_{i}(\sigma\cap\ol{W}_i)=\sigma\cap\ol{V}_i$ for each simplex $\sigma$ such that $\tau_i\subset\sigma$. 

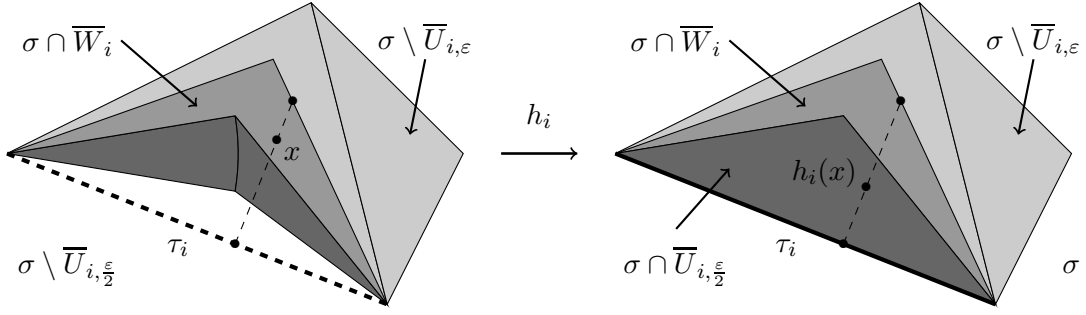
\begin{figure}[ht]
\centering
\begin{tikzpicture}
\draw[fill=black!20!white] (8,2) -- (13,0) -- (12,4) -- (8,2);
\draw[fill=black!20!white] (12,4) -- (13,0) -- (14,2) -- (12,4);
\draw[fill=black!40] (8,2) -- (11.5,3.25) -- (13,0) -- (11,2.5) -- (8,2);
\draw[fill=black!60] (8,2) -- (11,2.5) -- (13,0) -- (8,2);
\draw[ultra thick](8,2) -- (13,0);

\draw[fill=black!20!white] (0,2) -- (3,2.5) -- (5,0) -- (4,4) -- (0,2);
\draw[fill=black!20!white] (4,4) -- (5,0) -- (6,2) -- (4,4);
\draw[fill=black!60] (0,2) -- (3,1.5) arc(-9.462322208025803:9.462322208025803:3.04138126514911cm) -- (0,2);
\draw[fill=black!60] (5,0) -- (3,2.5) arc(9.462322208025803:-9.462322208025803:3.04138126514911cm) -- (5,0);
\draw[ultra thick,dashed](0,2) -- (5,0);
\draw[fill=black!40] (0,2) -- (3.5,3.25) -- (5,0) -- (3,2.5) -- (0,2);

\draw (14,0.5) node{$\sigma$};
\draw (10.25,0.75) node{$\tau_i$};
\draw (0.8,0.5) node{$\sigma\setminus\ol{U}_{i,\frac{\veps}{2}}$};
\draw (0.8,3.5) node{$\sigma\cap\ol{W}_i$};
\draw (8.8,3.5) node{$\sigma\cap\ol{W}_i$};
\draw (8.8,0.5) node{$\sigma\cap\ol{U}_{i,\frac{\veps}{2}}$};
\draw (5.5,3.5) node{$\sigma\setminus\ol{U}_{i,\veps}$};
\draw[->,thick] (5.5,3.25) -- (5.3,2.25);
\draw[->,thick] (1.5,3.5) -- (2.5,2.625);
\draw[->,thick] (9.5,3.5) -- (10.5,2.625);
\draw[->,thick] (8.8,1) -- (9.5,1.8); 
\draw[->,thick] (13.5,3.25) -- (13.3,2.25);
\draw (13.5,3.5) node{$\sigma\setminus\ol{U}_{i,\veps}$};
\draw (2.25,0.75) node{$\tau_i$};
\draw[->,thick] (6.5,2) -- (7.5,2);
\draw[thick] (6.5,2) -- (7.5,2);
\draw(7,2.5) node{$h_{i}$};;

\draw[dashed](3,0.8) -- (3.75714285714286,2.69285714285714);
\draw[dashed](11,0.8) -- (11.75714285714286,2.69285714285714);

\draw (3,0.8) node{\scriptsize$\displaystyle\bullet$};
\draw (3.75714285714286,2.69285714285714) node{\scriptsize$\displaystyle\bullet$};

\draw (3.55,2.175) node{\scriptsize$\displaystyle\bullet$};
\draw (3.75,2) node{$x$};

\draw (11.3,1.55) node{\scriptsize$\displaystyle\bullet$};
\draw (10.75,1.75) node{$h_i(x)$};

\draw (11,0.8) node{\scriptsize$\displaystyle\bullet$};
\draw (11.75714285714286,2.69285714285714) node{\scriptsize$\displaystyle\bullet$};

\end{tikzpicture}
\caption{Action of the map $h_i:\sigma\setminus U_{\tau_i,\frac{\veps}{2}}\to\sigma$.}
\end{figure}

\noindent{\bf Step 4.} {\em Global construction of the appropriate embedding outside a semialgebraic set of low dimension}. As one can check straightforwardly, the previous semialgebraic maps $g_i$ and $h_i$ glue together to provide the following semialgebraic maps
$$
\begin{array}{rll}
g:M&\to&N:=M_{\frac{\veps}{2}},\\ 
x&\mapsto&
\begin{cases}
x&\text{if $x\in M_{\veps}$,}\\[4pt]
g_{i}(x)&\text{if $x\in M\cap V_i$, $i=1,\ldots,s$,}
\end{cases}
\end{array}
$$
$$
\begin{array}{rll}
h:\cl(N)&\to&\cl(M),\\ 
x&\mapsto&
\begin{cases}
x&\text{if $x\in\cl(M)_\veps$,}\\[4pt]
h_{i}(x)&\text{if $x\in\cl(M)\cap\ol{W}_i$, $i=1,\ldots,s$.}
\end{cases}
\end{array}
$$
Again, routinary computations show $g\circ h|_{N}=\id_{N}$ and $h|_{N}\circ g=\id_{M}$; hence, $g$ and $h|_N$ are both semialgebraic homeomorphisms.

\begin{figure}[ht]
\centering
\begin{tikzpicture}

\draw[fill=black!20!white] (0,6) -- (4,6) -- (4,8) -- (0,8) -- (0,6);
\draw[fill=black!20!white] (0,8) -- (4,8) -- (4,10) -- (0,10) -- (0,8);
\draw[fill=black!20!white] (4,6) -- (6,6) -- (6,8) -- (4,8) -- (4,6);
\draw[fill=black!20!white] (4,8) -- (6,8) -- (6,10) -- (4,10) -- (4,8);

\draw (4,8) node{$\displaystyle\bullet$};








\draw[fill=black!20!white] (8,8) -- (10,8) -- (8,10) -- (8,8);
\draw[fill=black!20!white] (10,10) -- (10,8) -- (8,10) -- (10,10);
\draw[fill=black!20!white] (10,8) -- (12,8) -- (12,10) -- (10,8);
\draw[fill=black!20!white] (10,8) -- (10,10) -- (12,10) -- (10,8);
\draw[fill=black!20!white] (12,10) -- (14,10) -- (14,8) -- (12,10);
\draw[fill=black!20!white] (12,8) -- (12,10) -- (14,8) -- (12,8);

\draw[fill=black!20!white] (8,8) -- (10,8) -- (8,6) -- (8,8);
\draw[fill=black!20!white] (10,6) -- (10,8) -- (8,6) -- (10,6);
\draw[fill=black!20!white] (10,8) -- (12,8) -- (12,6) -- (10,8);
\draw[fill=black!20!white] (10,8) -- (10,6) -- (12,6) -- (10,8);
\draw[fill=black!20!white] (12,6) -- (14,6) -- (14,8) -- (12,6);
\draw[fill=black!20!white] (12,8) -- (12,6) -- (14,8) -- (12,8);

\draw (12,8) node{$\displaystyle\bullet$};


\draw[fill=black!20!white] (8,2) -- (9,2.5) -- (10,2) -- (8,4) -- (8,2);
\draw[fill=black!20!white] (10,4) -- (10,2) -- (8,4) -- (10,4);
\draw[fill=black!20!white] (10,2) -- (11,2.5) -- (12,2) -- (11.5,3) -- (12,4) -- (10,2);
\draw[fill=black!20!white] (10,2) -- (10,4) -- (12,4) -- (10,2);
\draw[fill=black!20!white] (12,4) -- (14,4) -- (14,2) -- (12,4);
\draw[fill=black!20!white] (12,2) -- (12.5,3) -- (12,4) -- (14,2) -- (13,2.5) -- (12,2);

\draw[fill=black!20!white] (8,2) -- (9,1.5) -- (10,2) -- (8,0) -- (8,2);
\draw[fill=black!20!white] (10,0) -- (10,2) -- (8,0) -- (10,0);
\draw[fill=black!20!white] (10,2) -- (11,1.5) -- (12,2) -- (11.5,1) -- (12,0) -- (10,2);
\draw[fill=black!20!white] (10,2) -- (10,0) -- (12,0) -- (10,2);
\draw[fill=black!20!white] (12,0) -- (14,0) -- (14,2) -- (12,0);
\draw[fill=black!20!white] (12,2) -- (12.5,1) -- (12,0) -- (14,2) -- (13,1.5) -- (12,2);

\draw (12,2) node{$\displaystyle\bullet$};


\draw[fill=black!20!white] (0,2.5) arc (90:26.56505117707799:0.5cm) -- (1,2.5) -- (1.55278640450004,2.22360679774998) arc (153.43494882292201:135:0.5cm) -- (0,4) -- (0,2.5);
\draw[fill=black!20!white] (2,4) -- (2,2.5) arc (90:135:0.5cm) -- (0,4) -- (2,4);
\draw[fill=black!20!white] (2.35355339059328,2.35355339059328) arc (45:26.56505117707799:0.5cm) -- (3,2.5) -- (4,2) -- (3.5,3) -- (3.77639320225002,3.55278640450004) arc (243.43494882292201:225:0.5cm) -- (2.35355339059328,2.35355339059328);
\draw[fill=black!20!white] (2.35355339059328,2.35355339059328) arc (45:90:0.5cm) -- (2,4) -- (3.5,4) arc (-180:-135:0.5cm) -- (2.35355339059328,2.35355339059328);
\draw[fill=black!20!white] (6,4) -- (6,2.5) arc (90:135:0.5cm) -- (4.35355339059328,3.64644660940672) arc (315:360:0.5cm) -- (6,4);
\draw[fill=black!20!white] (4,2) -- (4.5,3) -- (4.22360679774998,3.55278640450004) arc (296.56505117707799:315:0.5cm) -- (5.64644660940673,2.35355339059327) arc (135:153.43494882292201:0.5cm)-- (5,2.5) -- (4,2);

\draw[fill=black!20!white] (0,1.5) arc (-90:-26.56505117707799:0.5cm) -- (1,1.5) -- (1.55278640450004,1.77639320225002) arc (206.56505117707799:225:0.5cm) -- (0,0) -- (0,1.5);
\draw[fill=black!20!white] (0,0) -- (2,0) -- (2,1.5) arc (270:225:0.5cm) -- (0,0);
\draw[fill=black!20!white] (3,1.5) -- (4,2) -- (3.5,1) -- (3.77639320225002,0.44721359549996) arc (116.56505117707799:135:0.5cm) -- (2.35355339059327,1.64644660940673) arc (315:333.43494882292201:0.5cm) -- (3,1.5);
\draw[fill=black!20!white] (2,1.5) arc (270:315:0.5cm) -- (3.64644660940672,0.35355339059328) arc (135:180:0.5cm) -- (2,0) -- (2,1.5);
\draw[fill=black!20!white] (6,0) -- (6,1.5) arc (270:225:0.5cm) -- (4.35355339059328,0.35355339059328) arc (45:0:0.5cm) -- (6,0);
\draw[fill=black!20!white] (4,2) -- (5,1.5) -- (5.55278640450004,1.77639320225002) arc (206.56505117707799:225:0.5cm) -- (4.35355339059327,0.35355339059327) arc (45:63.43494882292202:0.5cm) -- (4.5,1) -- (4,2); 

\draw (4,2) node{$\displaystyle\bullet$};

\draw (3,5.5) node{\scriptsize$\displaystyle M:=\big(([-2,1]\times[-1,1])\setminus\{xy=0\}\big)\cup\{(0,0)\}$};
\draw (3,5) node{\scriptsize$\displaystyle \eta(M):=(M\cap\{xy=0\})\setminus\{(0,0)\}$};
\draw (7,8.5) node{\small$\Phi$};
\draw (7,7.5) node{\scriptsize semialgebraic};
\draw (7,7) node{\scriptsize triangulation};
\draw (7,6.25) node{\small{Stage 0}};
\draw (11.5,5) node{\small$h^{(1)}$};
\draw (10,5) node{\small{Stage 1}};
\draw (7,2.5) node{\small$h^{(2)}$};
\draw (7,1.5) node{\small{Stage 2}};

\draw[->,thick] (7.5,8) -- (6.5,8);
\draw[->,thick] (11,4.5) -- (11,5.5);
\draw[->,thick] (6.5,2) -- (7.5,2);

\draw[dashed] (0,2) -- (6,2);
\draw[dashed] (4,0) -- (4,4);

\draw[dashed] (8,2) -- (14,2);
\draw[dashed] (12,0) -- (12,4);
\end{tikzpicture}
\caption{Walkthrough of the proof.}
\end{figure}
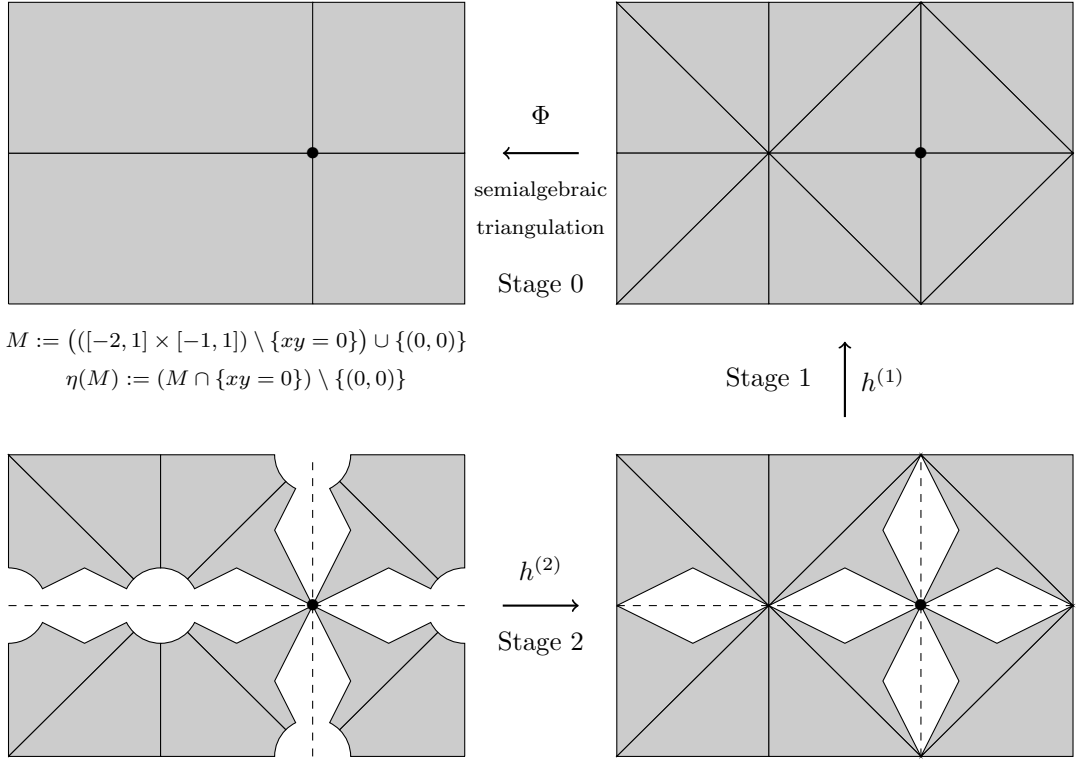 

\noindent{\bf Step 5.} {\em The semialgebraic set $N$ is appropriately embedded outside a semialgebraic set of low dimension.} Let us check now that for each $q\in\cl(N)\setminus\eta(M)'$ the germ $N_q$ is semialgebraically connected and either $\cl(N)_q=N_q$ or $\dim(\cl(N)_q\setminus N_q)=\dim(N_q)-1$; once this is done and taking into account $\dim(\eta(M)')=d-1$, the statement holds by induction hypothesis.

Indeed, let $q\in\cl(N)\setminus \eta(M)'$. Observe that if $q\not\in\bigcup_{i=1}^s\ol{U}_{i,\frac{\veps}{2}}$, then
$$
M_q=\Big(M\setminus\bigcup_{i=1}^s\ol{U}_{i,\frac{\veps}{2}}\Big)_q=N_q
$$
and $q\in\cl(M)\setminus\eta(M)$. Thus, $N_q$ is semialgebraically connected and either $\cl(N)_q=N_q$ or $\dim(\cl(N)_q\setminus N_q)=\dim(N_q)-1$. Therefore we may assume $q\in\ol{U}_{i,\frac{\veps}{2}}\setminus\partial\tau_i$ and $\dist(q,\tau_i)=\frac{\veps}{2}\dist(q,\partial\tau_i)$ for some $i=1,\ldots,s$. Let $\sigma\in K$ be the simplex such that $q\in\sigma^0$. We claim: $\tau_i\subset\sigma$. 

Indeed, $h_i(q)=\pi_i(q)\in\tau_i^0$ because by \eqref{clue0}, 
$$
0<\dist(q,\tau_i)=\|q-\pi_i(q)\|=(\tfrac{\veps}{2})^*\dist(\pi_i(q),\partial\tau_i).
$$
Thus, since $h_{i}(\sigma\cap\ol{W}_i)=\sigma\cap\ol{V}_i$, we deduce $h_i(q)\in\sigma\cap\tau_i^0\neq\varnothing$ and so $\tau_i\subset\sigma$.

Let $\sigma_1,\ldots,\sigma_r$ be the collection of all simplices of $K$ such that $\sigma\subset\sigma_j$ and $\sigma_j^0\subset M$ and let us assume that $\sigma_1=\sigma$. The union $\bigcup_{j=1}^r\sigma_j^0$ is an open neighborhood of $q$ in $M$. Denote $S_j:=\sigma_j^0\setminus\ol{U}_{i,\frac{\veps}{2}}$ and $\widehat{S}_j:=\sigma_j\setminus\ol{U}_{i,\frac{\veps}{2}}$ and let us check that: \em The germ
\begin{equation}\label{nq}
N_q=M_q\setminus\ol{U}_{i,\frac{\veps}{2},q}=\Big(\bigcup_{k=1}^r\sigma_{k,q}^0\Big)\setminus\ol{U}_{i,\frac{\veps}{2},q}=\bigcup_{k=1}^r\sigma_{k,q}^0\setminus\ol{U}_{i,\frac{\veps}{2},q}=\bigcup_{k=1}^rS_{k,q}
\end{equation}
is semialgebraically connected and $\dim(\cl(N)_q\setminus N_q)=\dim(N_q)-1$\em.

By Lemma \ref{lcd}, the germ $S_{j,q}$ is semialgebraically connected and 
$$
\dim(\cl(\widehat{S}_{j,q})\setminus\widehat{S}_{j,q})=\dim(S_{j,q})-1
$$ 
for each $j=1,\ldots,r$. As $S_1\subset\widehat{S}_j$, we deduce that the germ $S_{1,q}\cup S_{j,q}$ is connected for $j=1,\ldots,r$ and so it is also $N_q=\bigcup_{k=1}^rS_{k,q}$.

Next, let us check $\cl(S_k)_q\setminus N_{q}\supset\cl(\widehat{S}_k)_q\setminus\widehat{S}_{k,q}$. It is enough to show $\cl(S_k)\setminus\bigcup_{j=1}^rS_j\supset\cl(\widehat{S}_k)\setminus\widehat{S}_k$. Indeed,
$$
\cl(S_k)\setminus\bigcup_{j=1}^rS_j=\cl(S_k)\setminus\bigcup_{j=1}^r((\sigma_j^0\cap\sigma_k)\setminus\ol{U}_{i,\frac{\veps}{2}})\supset\cl(S_k)\setminus(\sigma_k\setminus\ol{U}_{i,\frac{\veps}{2}})=\cl(\widehat{S}_k)\setminus\widehat{S}_k.
$$
Thus, $\cl(N)_q\setminus N_q\supset\bigcup_{k=1}^q\cl(\widehat{S}_k)_q\setminus\widehat{S}_{k,q}$ and using \cite[2.8.13]{bcr}, we conclude
\begin{multline*}
\dim(N_q)-1\geq\dim(\cl(N)_q\setminus N_q)=\max\{\dim(\cl(\widehat{S}_k)_q\setminus\widehat{S}_{k,q}):\,k=1,\ldots,r\}\\
=\max\{\dim(S_{k,q}):\,k=1,\ldots,r\}-1=\dim(N_q)-1,
\end{multline*}
as required.\qed

\section{Proofs of Lemma \ref{st1} and Theorem \ref{st3}}\label{s5}

In this section we conduct the proofs of Lemma \ref{st1} and Theorem \ref{st3} stated in the Introduction. 

\subsection{Proof of Lemma \ref{st1}}\label{1}
(i) Let $\varphi$ be a homomorphism whose core is ${\tt p}$. At least the homomorphism $\psi_p:={\rm ev}_{M_F,{\tt p}}\circ{\tt i}_{M,F}$ fits this condition. We develop the proof in several steps: 

\noindent{\bf Step 1.} \em Assume that $M$ is closed in $R^n$\em. Consider the evaluation $R$-homomorphism $\psi:R[\x]:=R[\x_1,\ldots,\x_n]\to F,\ Q\mapsto Q({\tt p})$. Since $F$ is a real closed field and ${\mathcal S}(R^n,R)$ is the real closure of $R[\x]$, there exists a unique homomorphism $\Psi:{\mathcal S}(R^n,R)\to F$ such that $\Psi|_{R[\x]}=\psi$; of course, $\Psi={\rm ev}_{F^n,{\tt p}}\circ{\tt i}_{R^n,F}$. Since $M$ is a closed semialgebraic subset of $R^n$, the $R$-homomorphism 
$$
\theta:{\mathcal S}(R^n,R)\to{\mathcal S}(M,R),\ f\mapsto f|_{M}
$$ 
is by \cite{dk} surjective and the uniqueness of $\Psi$ guarantees $\Psi=\varphi\circ\theta$. Thus, if $f\in{\mathcal S}(M,R)$, we pick a semialgebraic extension $\widehat{f}\in{\mathcal S}(R^n,R)$ and deduce
$$
\varphi(f)=\Psi(\widehat{f})=\widehat{f}_F({\tt p})=f_F({\tt p}).
$$
The last equality holds because ${\tt p}\in M_F$. We conclude $\varphi=\psi_p$.

\noindent{\bf Step 2.} \em The general case\em. Let $(X,{\tt j}_X)$ be a semialgebraic pseudo-compact\-ifica\-tion of $M$. We know by Step 1 that $\varphi\circ{\tt j}_X^*={\rm ev}_{X_F,{\tt j}_{X,F}({\tt p})}\circ{\tt i}_{X,F}$ and so $\varphi\circ{\tt j}_X^*(h)=h_F({\tt j}_{X,F}({\tt p}))=(h\circ{\tt j}_X)_F({\tt p})$ for all $h\in{\mathcal S}(X,R)$. Now since ${\mathcal S}^*(M,R)=\displaystyle\lim_{\longrightarrow}{\mathcal S}(X,R)$ (see \ref{cita}), we deduce $\varphi(h)=h_F({\tt p})$ for all $h\in{\mathcal S}^*(M,R)$. Next, if $g\in{\mathcal S}(M,R)$, we write $g=\frac{g}{1+|g|}/\frac{1}{1+|g|}$ where $\frac{g}{1+|g|},\frac{1}{1+|g|}\in{\mathcal S}^*(M,R)$ and $\frac{1}{1+|g|}$ is a unit in ${\mathcal S}(M,R)$. Hence,
$$
\varphi(g)=\frac{\varphi(\frac{g}{1+|g|})}{\varphi(\frac{1}{1+|g|})}=\frac{\frac{g_F({\tt p})}{1+|g_F({\tt p})|}}{\frac{1}{1+|g_F({\tt p})|}}=g_F({\tt p}),
$$
that is, $\varphi={\rm ev}_{M_F,{\tt p}}\circ{\tt i}_{M,F}$.

\noindent{\bf Step 3.} Let us prove now: \em If ${\rm d}_{\cl(M)}({\tt p})=\dim(M_{F,{\tt p}})$, then ${\tt p}\in M_{F}$.\em

Suppose ${\tt p}\in\cl(M_F)\setminus M_{\lc,F}=\cl(\cl(M_F)\setminus M_F)$. As usual, we assume that $M$ is bounded and let $(K,\Phi)$ be a semialgebraic triangulation of $X:=\cl(M)$ compatible with $M$. Consider the open semialgebraic subset $U:=\bigcup_{{\tt p}\in\Phi(\sigma)_F}\Phi(\sigma^0)$ of $X$; clearly, ${\tt p}\in U_F$. Notice that $\dim(M\cap U)=\dim(M_{F,{\tt p}})$. We have 
$$
{\tt p}\in\cl(X_F\setminus M_F)\cap U_F=\cl((X_F\cap U_F)\setminus(M_F\cap U_F))\cap U_F\subset\cl(\cl(M\cap U)\setminus(M\cap U))_F;
$$
hence, by \cite[2.8.13]{bcr} we conclude
\begin{multline*}
\dim(M_{F,{\tt p}})={\tt d}_X({\tt p})\leq\dim(\cl(\cl(M\cap U)\setminus(M\cap U)))\\
=\dim(\cl(M\cap U)\setminus(M\cap U))<\dim(M\cap U)=\dim(M_{F,{\tt p}}), 
\end{multline*}
which is a contradiction. Thus, ${\tt p}\in M_{\lc,F}\subset M_{F}$, as required.

\vspace{2mm}
(ii) Let $\varphi$ be a homomorphism whose core is ${\tt p}$ and fix $f\in{\mathcal S}(M,R)$. Since $M$ is appropriately embedded, there exists by Theorem \ref{ext2} an open semialgebraic neighborhood $V$ of $M$ in $X:=\cl(M)$, a semialgebraic set $Y\subset V\setminus M$ such that $\dim(\cl(Y)_q)\leq\dim(M_q)-2$ for all $q\in X$ and a semialgebraic extension $\widehat{f}\in{\mathcal S}(V\setminus Y,R)$. We claim: ${\tt p}\in(V\setminus\cl(Y))_F$.

Indeed, since $V$ is locally closed and ${\tt p}$ is adjacent to $M$, we have ${\tt p}\in V_F$. Let us check next that ${\tt p}\not\in\cl(Y)_F$. Let $(K,\Phi)$ be a semialgebraic triangulation of $X$ compatible with $M$ and $\cl(Y)$ where $K$ is a finite simplicial complex and $\Phi:|K|\to X$ a semialgebraic homeomorphism. For the sake of simplicity, we identify $X$ with $|K|$ and the involved objects $M$ and $\cl(Y)$ with their inverse images under $\Phi$. 

Assume by contradiction that ${\tt p}\in\cl(Y)_F$. As $(K,\Phi)$ is compatible with $\cl(Y)$, there exists $\tau\in K$ such that ${\tt p}\in \tau^0_F\subset\cl(Y)_F$. Define ${\mathfrak G}:=\{\sigma\in K:\,\tau\subset\sigma\}$ and consider the open semialgebraic neighborhood $U:={\rm St}(\tau^0)=\bigcup_{\sigma\in{\mathfrak G}}\sigma^0$ of $\tau^0$ in $X$. Let ${\mathfrak F}:=\{\sigma\in{\mathfrak G}:\,\sigma^0\subset\cl(Y)\}$ and $d:=\max\{\dim(\sigma):\,\sigma\in{\mathfrak F}\}$. Since $\dim(\cl(Y)_q)\leq\dim(M_q)-2$ for all $q\in X$, there exists a simplex $\varsigma\in{\mathfrak G}$ such that $d\leq\dim(\varsigma)-2$ and $\varsigma^0\subset M$. Thus, we have ${\tt p}\in\tau^0_F\subset\varsigma_F$ and 
\begin{equation}\label{sigmam}
\dim(\varsigma^0)=\dim(\varsigma^0_F)=\dim(\varsigma^0_{F,{\tt p}})\leq\dim(M_{F,{\tt p}})
\end{equation} 
because $\varsigma^0_F$ is pure dimensional. As ${\tt p}\in U_F\cap\cl(Y)_F=\bigcup_{\sigma\in{\mathfrak F}}\sigma^0_F$, we deduce using \eqref{sigmam}
\begin{multline*}
{\rm d}_X({\tt p})\leq\dim(\cl(U_F\cap\cl(Y)_F))=\dim(U_F\cap\cl(Y)_F)\\
=d\leq\dim(\varsigma^0_F)-2\leq\dim(M_{F,{\tt p}})-2<{\rm d}_X({\tt p}),
\end{multline*}
which is a contradiction. Therefore ${\tt p}\in V_F\setminus\cl(Y)_F=(V\setminus\cl(Y))_F$.

Consider now the closed semialgebraic set $(X\setminus V)\cup\cl(Y)$ and let $G\in{\mathcal S}(X,R)$ be a semialgebraic function such that $Z(G)=(X\setminus V)\cup\cl(Y)$; clearly $G_F({\tt p})\neq0$. Denote the natural inclusion induced by the restriction to $M$ with $\theta:{\mathcal S}(X,R)\hookrightarrow{\mathcal S}(M,R)$ and let $g=\theta(G)$. The semialgebraic functions $h_1:=\frac{f}{1+|f|}g$ and $h_2:=\frac{g}{1+|f|}$ can be extended by zero to respective semialgebraic functions $H_1,H_2\in{\mathcal S}(X,R)$, that is, $\theta(H_i)=h_i$. By (i) and using ${\tt p}\in X_F$, we have $\varphi\circ\theta={\rm ev}_{X_F,{\tt p}}\circ{\tt i}_{M,F}$; hence, $\varphi(h_i)=\varphi\circ\theta(H_i)=H_{i,F}({\tt p})$ and $\varphi(g)=\varphi\circ\theta(G)=G_F({\tt p})$. As ${\tt p}\in(V\setminus\cl(Y))_F$ and $f$ can be extended to $\widehat{f}\in{\mathcal S}(V\setminus Y,R)$, we deduce 
\begin{equation*}
\begin{split}
\frac{\varphi(f)}{1+|\varphi(f)|}\varphi(g)=\varphi(h_1)=H_{1,F}({\tt p})=\frac{\widehat{f}_F({\tt p})}{1+|\widehat{f}_F({\tt p})|}G_F({\tt p})=\frac{\widehat{f}_F({\tt p})}{1+|\widehat{f}_F({\tt p})|}\varphi(g),\\
\frac{1}{1+|\varphi(f)|}\varphi(g)=\varphi(h_2)=H_{2,F}({\tt p})=\frac{1}{1+|\widehat{f}_F({\tt p})|}G_F({\tt p})=\frac{1}{1+|\widehat{f}_F({\tt p})|}\varphi(g).
\end{split}
\end{equation*}
Since $\frac{1}{1+|\varphi(f)|}\varphi(g)\neq0$, we conclude $\varphi(f)=\widehat{f}_F({\tt p})$ after dividing the previous equalities.

We proved the uniqueness as well as the existence of the homomorphism $\varphi$.

\vspace{2mm}
(iii) Assume that $M$ is bounded and let $C\subset X:=\cl(M)$ be a closed semialgebraic set of dimension ${\rm d}_X({\tt p})$ such that ${\tt p}\in C_F$. Let $(K,\Phi)$ be a semialgebraic triangulation of $X$ compatible with $M$, $C$ and $X\setminus M$ where $K$ is a finite simplicial complex and $\Phi:|K|\to X$ is a semialgebraic homeomorphism. For the sake of simplicity, we identify $X$ with $|K|$ and the involved objects $M$, $\cl(Y)$ and $X\setminus M$ with their inverse images under $\Phi$. 

Let $\tau\in K$ be a simplex such that ${\tt p}\in\tau^0_F$ and let us check: \em There exists a simplex $\sigma\in K$ of dimension $\dim(\sigma)\geq\dim(\tau)+2$ such that $\tau\subset\sigma$ and $\sigma^0\subset M$\em. 

Indeed, define ${\mathfrak G}:=\{\sigma\in K:\,\tau\subset\sigma\}$ and consider the open semialgebraic neighborhood $U:={\rm St}(\tau^0)=\bigcup_{\sigma\in{\mathfrak G}}\sigma^0$ of $\tau^0$ in $X$. Let ${\mathfrak F}:=\{\sigma\in{\mathfrak G}:\,\sigma^0\subset C\}$ and notice the following: Since ${\tt p}\in\tau^0_F$ and $(K,\Phi)$ is compatible with $C$, we have 
$$
{\rm d}_X({\tt p})\leq\dim(\tau)=\dim(\tau^0)\leq\dim(C)={\rm d}_X({\tt p}),
$$
that is, ${\rm d}_X({\tt p})=\dim(\tau)$. Since ${\rm d}_X({\tt p})\leq\dim(M_{F,{\tt p}})-2$ and 
$$
\dim(M_{F,{\tt p}})=\max\{\dim(\sigma):\,\sigma\in{\mathfrak G}\ \text{and}\ \sigma^0\subset M\},
$$ 
there exists a simplex $\sigma\in{\mathfrak G}$ such that $\dim(\sigma)\geq{\rm d}_X({\tt p})+2=\dim(\tau)+2$ and $\sigma^0\subset M$. 

As ${\tt p}\not\in M_{F}$, we have $\tau^0 \cap M=\varnothing$. Let $b\in\sigma^0$ be the barycenter of $\sigma$ and $\epsilon$ a face of $\sigma$ of dimension $\dim(\sigma)-1$ such that $\tau\subsetneq\epsilon$. Let $\epsilon(b)$ be the convex hull of $\epsilon\cup\{b\}$, which is a simplex such that $\tau\subset\epsilon(b)$, $\dim(\sigma)=\dim(\epsilon(b))$ and $\epsilon(b)\setminus M\subset\epsilon$. Since $\dim(\epsilon(b))=\dim(\epsilon)+1\geq\dim(\tau)+2$, there exists a vertex $v$ of $\epsilon(b)$ such that $v\not\in\tau$. Let $\eta$ be the segment connecting $v$ and $b$, which is a face of $\epsilon(b)$ that does not meet $\tau$. Moreover, $\eta\setminus M\subset\{v\}$ and so the interior of any segment connecting a point of $\eta^0$ with a point of $\tau$ is contained in $M$.

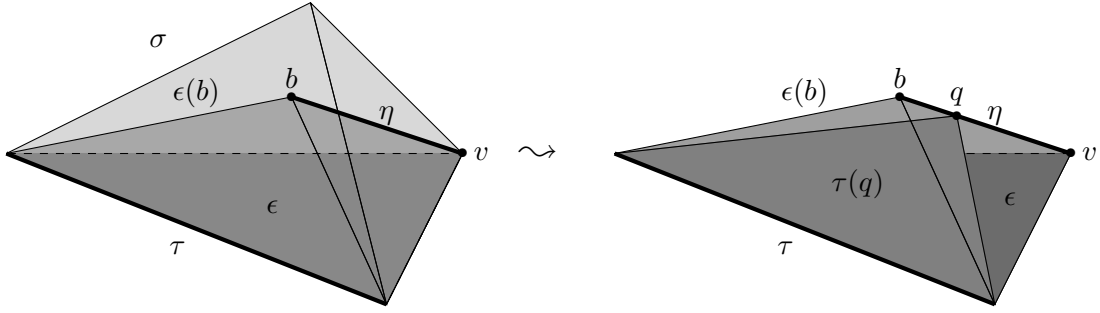
\begin{figure}[ht]
\centering
\begin{tikzpicture}

\draw[fill=black] (0,2) -- (5,0) -- (6,2) -- (0,2);
\draw[fill=black!50!white,opacity=0.75,draw=none] (0,2) -- (5,0) -- (3.75,2.75) -- (0,2);
\draw[fill=black!50!white,opacity=0.75,draw=none] (5,0) -- (6,2) -- (3.75,2.75) -- (5,0);

\draw[fill=black!30!white,opacity=0.5,draw=none] (0,2) -- (5,0) -- (4,4) -- (0,2);
\draw[fill=black!30!white,opacity=0.5,draw=none] (4,4) -- (5,0) -- (6,2) -- (4,4);

\draw (0,2) -- (5,0) -- (4,4) -- (0,2);
\draw (4,4) -- (5,0) -- (6,2) -- (4,4);
\draw (0,2) -- (5,0) -- (3.75,2.75) -- (0,2);
\draw (5,0) -- (6,2) -- (3.75,2.75) -- (5,0);
\draw[dashed] (0,2) -- (5,0) -- (6,2) -- (0,2);
\draw[ultra thick](6,2) -- (3.75,2.75);
\draw[ultra thick](0,2) -- (5,0);

\draw[fill=black!80!white] (8,2) -- (13,0) -- (14,2) -- (8,2);
\draw[fill=black!50!white] (8,2) -- (13,0) -- (12.5,2.5) -- (8,2);
\draw[fill=black!50!white,opacity=0.75,draw=none] (8,2) -- (13,0) -- (11.75,2.75) -- (8,2);
\draw[fill=black!50!white,opacity=0.75,draw=none] (13,0) -- (14,2) -- (11.75,2.75) -- (13,0);

\draw (8,2) -- (13,0) -- (11.75,2.75) -- (8,2);
\draw (13,0) -- (14,2) -- (11.75,2.75) -- (13,0);
\draw(8,2) -- (13,0) -- (12.5,2.5) -- (8,2);
\draw[dashed] (14,2) -- (12.6,2);
\draw[ultra thick](14,2) -- (11.75,2.75);
\draw[ultra thick](8,2) -- (13,0);

\draw (2,3.5) node{$\sigma$};
\draw (2.5,2.8) node{$\epsilon(b)$};
\draw (3.5,1.3) node{$\epsilon$};
\draw (2.25,0.75) node{$\tau$};
\draw (3.75,3) node{$b$};
\draw (3.75,2.75) node{\scriptsize$\displaystyle\bullet$};
\draw (6.25,2) node{$v$};
\draw (6,2) node{\scriptsize$\displaystyle\bullet$};
\draw (5,2.5) node{$\eta$};

\draw (10.5,2.8) node{$\epsilon(b)$};
\draw (13.2,1.4) node{$\epsilon$};
\draw (11.2,1.6) node{$\tau(q)$};
\draw (10.25,0.75) node{$\tau$};
\draw (11.75,3) node{$b$};
\draw (11.75,2.75) node{\scriptsize$\displaystyle\bullet$};
\draw (14.25,2) node{$v$};
\draw (14,2) node{\scriptsize$\displaystyle\bullet$};
\draw (12.5,2.75) node{$q$};
\draw (12.5,2.5) node{\scriptsize$\displaystyle\bullet$};
\draw (13,2.5) node{$\eta$};

\draw (7,2) node{\Large$\displaystyle\leadsto$};

\end{tikzpicture}
\caption{Construction of $\epsilon(b)$ and $\tau(q)$ for $q\in\eta^0$.}
\end{figure}

For each $q\in\eta^0$ denote the convex hull of $\tau\cup\{q\}$ with $\tau(q)$, which is a simplex contained in $\epsilon(b)$ and consider the closed semialgebraic subset $T(q):=\tau(q)\cap M$ of $M$, which satisfies $\cl(T(q))=\tau(q)$ and $\tau(q)\setminus T(q)\subset\tau$. By \cite{dk} the homomorphism 
\begin{equation}\label{phiq}
\phi_q:{\mathcal S}(M,R)\to{\mathcal S}(T(q),R),\ f\mapsto f|_{T(q)}
\end{equation}
is surjective for each $q\in\eta^0$. Since ${\tt p}\not\in M_{F}$, we have $\tau^0 \cap M=\varnothing$ and so 
\begin{equation}\label{inclusions}
\tau^0\subset\tau(q)\setminus T(q)\subset\tau. 
\end{equation}
As $\dim(T(q))=\dim(\tau^0)+1$ and 
$$
{\rm d}_{M}({\tt p})\leq{\rm d}_{T(q)}({\tt p})\leq\dim(\tau^0)={\rm d}_{M}({\tt p}),
$$ 
we obtain ${\rm d}_{T(q)}({\tt p})=\dim(T(q))-1=\dim(T(q)_{F,{\tt p}})-1$ because $T(q)_F$ is pure dimensional. Moreover, observe that $T(q)$ is appropriately embedded because it is bounded, $T(q)_x$ is semialgebraically connected and by \eqref{inclusions} either $\tau(q)_x=T(q)_x$ or $\dim(\tau(q)_x\setminus T(q)_x)=\dim(\tau)=\dim(T(q)_x)-1$ for all $x\in\tau(q)$. Thus, by (ii) there exists a unique homomorphism 
\begin{equation}\label{varphiq}
\varphi_{q}:{\mathcal S}(T(q),R)\to F,\ h\mapsto\widehat{h}({\tt p})
\end{equation}
for each $q\in\eta^0$. In each case $\widehat{h}$ is the continuous extension of $h$ to a fitting semialgebraic set $T(q)\subset\widehat{T}(q)\subset\cl(T(q))$ that contains ${\tt p}$. In view of \eqref{phiq} and \eqref{varphiq}, the core of the homomorphism $\psi_q:=\varphi_q\circ\phi_q$ for each $q\in\eta^0$ is ${\tt p}$.

To finish let us prove the following: \em Given two different points $q_1,q_2\in\eta^0$, there exists a semialgebraic function $f\in{\mathcal S}(M,R)$ such that $\phi_{q_1}(f)\neq\phi_{q_2}(f)$\em. 

Indeed, as $T(q_1)\cap T(q_2)\subset\tau\cap M$, the closed semialgebraic subsets $C_i:=T(q_i)\setminus\tau$ of $M\setminus\tau$ are disjoint. Thus, there exists by \cite{dk} a bounded semialgebraic function $f_0\in{\mathcal S}(M\setminus\tau,R)$ such that $f_0|_{C_i}=i-1$. On the other hand, $\tau\setminus\tau^0$ is a closed semialgebraic set in $R^n$ and we choose a bounded semialgebraic function $g\in{\mathcal S}(R^n,R)$ such that $Z(G)=\tau\setminus\tau^0$; denote $g:=G|_{M\setminus\tau}$. Clearly, the semialgebraic function $f_0g\in{\mathcal S}(M\setminus\tau,R)$ can be extended by zero to a semialgebraic function $f\in{\mathcal S}(M,R)$. Observe that $f|_{T(q_1)}=G|_{T(q_1)}f_0|_{T(q_1)}=0$ and $f|_{T(q_2)}=G|_{T(q_2)}f_0|_{T(q_2)}=G|_{T(q_2)}$. Thus, $\phi_{q_1}(f)=0$ and $\phi_{q_2}(f)=G_F({\tt p})\neq0$ because $Z(G_F)=\tau_F\setminus\tau^0_F$ and ${\tt p}\in\tau^0_F$.
\qed

\subsection{Basics on real spectra} The proof of Theorem \ref{st3} requires some preliminary definitions and notations concerning real spectra that we summarize here. Denote with $A$ either ${\mathcal P}(M)$ or ${\mathcal S}(M,R)$ and with $\Sper(A)$ the \em real spectrum of $A$\em; we refer the reader to \cite[\S7]{bcr} for the definition, notations and main properties of the real spectrum of a unital commutative ring. The points of $\Sper(A)$ are called \em prime cones\em. The \em support $\gtP_{\alpha}:=\alpha\cap(-\alpha)$ of a prime cone $\alpha\in\Sper(A)$ \em is a prime ideal of $A$. As usual, given $f\in A$ and $\alpha\in\Sper(A)$, we write $f(\alpha)\geq 0$ if $f\in\alpha$ and $f(\alpha)>0$ if $f\in\alpha\setminus(-\alpha)$; otherwise, $f(\alpha)<0$. Consider the collection of sets $\{\alpha\in\Sper(A):\, g_1(\alpha)>0,\ldots,g_r(\alpha)>0\}$ where $g_1,\ldots,g_r\in A$. Such collection constitutes a basis of the spectral topology of $\Sper(A)$. Recall that if $A=\psd(M)$, the \em zero set \em of $\gtP_{\alpha}$ is
$$
Z(\gtP_{\alpha}):=\{x\in\ol{M}^{\zar}:\, h(x)=0\ \forall h\in\gtP_{\alpha}\}
$$
where $\ol{M}^{\zar}$ is the \em Zariski closure of $M$ in $R^n$\em. Moreover, if $N:=\bigcup_{i=1}^r\{g_{i1}>0,\ldots,g_{is}>0,f_i=0\}\subset M$ is a semialgebraic set with $g_{ij},f_i\in R[\x]$, then $\widetilde{N}$ is the set of all prime cones $\alpha\in\Sper(A)$ satisfying the existence of an index $1\leq i\leq r$ such that $g_{i1}(\alpha)>0,\ldots,g_{is}(\alpha)>0,f_i(\alpha)=0$; of course, $\widetilde{N}$ does not depend on the description of $N$. Now if $\alpha\in\Sper(A)$, we define
\begin{equation}\label{def}
\dim(\alpha):=\dim({\mathcal P}(M)/\gtP_\alpha)\quad\text{and}\quad\dim_{\alpha}(\widetilde{M}):=\sup\{\dim(\beta):\,\beta\subset\alpha\ \text{and}\ \beta\in\widetilde{M}\}.
\end{equation}
The Zariski spectrum $\Spec(A)$ of $A$ is the collection of all prime ideals of $A$ and its Zariski topology has as a basis the collection of sets ${\mathcal D}(g):=\{\gtp\in\Spec(A):\,g\not\in\gtp\}$ where $g\in A$. If $A={\mathcal S}(M,R)$, it is well-known that $\Sper({\mathcal S}(M,R))$ is homeomorphic to $\Spec({\mathcal S}(M,R))$ via the support map: $\alpha\mapsto\gtP_\alpha$ (see for instance \cite{s2}).

\subsection{Proof of Theorem \ref{st3}}\label{2}
We divide the proof into several steps.

\noindent{\bf Step 1.} {\em Construction of the semialgebraic set $N_0$ in the statement}. Let $\gtp_0:=\ker\varphi$ be the kernel of $\varphi$ and $(X\subset R^n,{\tt j})$ a brimming semialgebraic pseudo-compactification of $M$ for $\gtp_0$. To simplify notations, we identify $M$ with ${\tt j}(M)$. The challenge of this proof is to find a chain of prime $z$-ideals $\gtp_d\subset\cdots\subset\gtp_1\subset\gtp_0$ of maximal length in ${\mathcal S}(M,R)$ such that ${\tt d}_M(\gtp_i)=\tr\deg_R(\qf({\mathcal S}(M,R)/\gtp_0))+i$ for $i=1,\ldots,d$. The construction of such a chain is quite cumbersome and involves a strong use of real spectra.

\noindent{\bf S1.a.} {\em Initial preparation}. Let $Z_0$ be the Zariski closure of $X$ and $\phi:\psd(Z_0)\hookrightarrow{\mathcal S}(X,R)$ the inclusion homomorphism. Consider the commutative diagram
$$
\xymatrix{
\Sper({\mathcal S}(X,R))\ar[rr]^{\Sper(\phi)}\ar[d]_{\supp}^{\cong}&&\widetilde{X}\subset\Sper({\mathcal P}(Z_0))\ar[d]^{\supp}\\
\Spec({\mathcal S}(X,R))\ar[rr]^{\Spec(\phi)}&&\Spec({\mathcal P}(Z_0))
}
$$
where the rows and columns are continuous maps (with respect to the respective spectral and Zariski topologies), $\Sper(\phi)(\alpha)=\phi^{-1}(\alpha)$ for each $\alpha\in\Sper({\mathcal S}(X,R))$ and $\Spec(\phi)(\gtq)=\phi^{-1}(\gtq)$ if $\gtq\in\Spec({\mathcal S}(X,R))$ (see \cite[7.1.7-8]{bcr}). As it is well-known, the map 
$$
\mu:=\Sper(\phi)\circ\supp^{-1}:\Spec({\mathcal S}(X,R))\to\widetilde{X}
$$ 
is a homeomorphism (see \cite[\S3]{cc}) and for every chain of prime cones $\widetilde{X}\ni\alpha_0\subsetneq\cdots\subsetneq\alpha_r$ in ${\mathcal P}(Z_0)$, $\mu^{-1}(\alpha_0)\subsetneq\cdots\subsetneq\mu^{-1}(\alpha_r)$ is a chain of prime ideals in ${\mathcal S}(X,R)$. Moreover, for each $\alpha\in\widetilde{X}$ it holds $\gtP_\alpha=\mu^{-1}(\alpha)\cap\psd(Z_0)$ and 
\begin{equation}\label{pol}
\dgt_M(\mu^{-1}(\alpha))=\dim(Z(\gtP_{\alpha}))
\end{equation}
because by \cite[2.6.6]{bcr} we have $Z(\gtP_{\alpha})=\bigcap_{f\in\mu^{-1}(\alpha)}\ol{Z_X(f)}^{\zar}$.

We construct now with the aid of $\mu$ a chain of prime ideals of maximal length contained in $\gtp_0$ such that the image of each term of the chain under $\mu$ belongs to $\widetilde{M}_{\lc}$. To that end, let $\beta_0:=\mu(\gtp_0\cap{\mathcal S}(X,R))\in\widetilde{X}$. As $M_{\lc}$ is dense in $X$, so is $\widetilde{M}_{\lc}$ in $\widetilde{X}$ (by Artin-Lang's Theorem); hence, $\beta_0$ is adherent to the constructible set $\widetilde{M}_{\lc}$. By \cite[Thm. I]{rz} it holds 
\begin{equation}\label{d}
d:=\dim_{\beta_0}(\widetilde{M}_{\lc})-\dim(\beta_0)\geq0
\end{equation} 
(see \eqref{def} for the definitions of the previous dimensions) and $\widetilde{M}_{\lc}$ contains $d$ points $\beta_1,\ldots,\beta_d$ such that $\beta_d\subsetneq\cdots\subsetneq\beta_1\subsetneq\beta_0$.

\noindent{\bf S1.b.} {\em Reduction to the case $d\geq1$}. If $d=0$, then $\beta_0\in\widetilde{M}_{\lc}$. We have the following diagram
$$
\xymatrix{
\psd(Z_0)/\gtP_{\beta_0}\ar@{^(->}[r]&{\mathcal S}(X,R)/(\gtp_0\cap{\mathcal S}(X,R))\ar@{^(->}[r]&{\mathcal S}(M,R)/\gtp_0\ar@{^(->}[r]^(.65){\ol{\varphi}}&F\\
\psd(Z_0)\ar@{->>}[u]\ar@{^(->}[rr]&&{\mathcal S}(M,R)\ar@{->>}[u]\ar[ur]^{\varphi}
}
$$
where the arrows in the first row preserve orderings. In particular, the prime cone $\beta_0$ is the inverse image of the cone $F^2$. As $\beta_0\in\widetilde{M}_{\lc}$, there exist $g_1,\ldots,g_r,f\in R[\x]:=R[\x_1,\ldots,\x_n]$ such that $\{g_1>0,\ldots,g_r>0,f=0\}\subset\widetilde{M}_{\lc}$ and $g_1(\beta_0)>0,\ldots,g_f(\beta_0)>0,f(\beta_0)=0$. Thus, $\varphi(g_1)>0,\ldots,\varphi(g_r)>0,\varphi(f)=0$; hence, the core ${\tt p}:=(\varphi(\pi_1),\ldots,\varphi(\pi_m))$ of $\varphi$ satisfies $g_1({\tt p})>0,\ldots,g_r({\tt p})>0,f({\tt p})=0$ and we conclude ${\tt p}\in M_{\lc,F}\subset M_F$. By Lemma \ref{st1}, $\varphi=\psi_{\tt p}:{\mathcal S}(M,R)\to F$ is the unique $R$-homomorphism from ${\mathcal S}(M,R)$ to $F$ whose core is ${\tt p}$. Thus, we assume in the following $d\geq1$.

\noindent{\bf S1.c.} {\em Construction of the chain of prime $z$-ideals $\gtp_d\subsetneq\cdots\subsetneq\gtp_1\subsetneq\gtp_0$ in ${\mathcal S}(M,R)$ such that ${\tt d}_M(\gtp_i)=\tr\deg_R(\qf({\mathcal S}(M,R)/\gtp_0))+i$ for $i=1,\ldots,d$}. Consider the chain of prime ideals
$$
\mu^{-1}(\beta_d)\subsetneq\cdots\subsetneq\mu^{-1}(\beta_1)\subsetneq\mu^{-1}(\beta_0)=\gtp_0\cap{\mathcal S}(X,R)
$$
of ${\mathcal S}(X,R)$. We claim:\setcounter{substep}{0} 

\begin{substeps}{2}
\em For $i=1,\ldots,d$ the ideal $\gtp_i:=\mu^{-1}(\beta_i){\mathcal S}(M_{\lc},R)\cap{\mathcal S}(M,R)$ is a prime $z$-ideal of ${\mathcal S}(M,R)$ contained in $\gtp_0$ such that $\gtp_i\cap{\mathcal S}(X,R)=\mu^{-1}(\beta_i)$\em.
\end{substeps}

The last part of the claim shows in particular that each $\gtp_i$ is a proper ideal. Fix an equation $G\in{\mathcal S}(X,R)$ of the closed semialgebraic set $X\setminus M_{\lc}$ and notice $G\not\in\mu^{-1}(\beta_i)$ for $i=1,\ldots,d$ because $\beta_i\in\widetilde{M}_{\lc}$; denote $g:=G|_M$. Let us begin by proving the following: 

\begin{substeps}{2}
\em The ideal $\gtp_i':=\mu^{-1}(\beta_i){\mathcal S}(M_{\lc},R)$ of ${\mathcal S}(M_{\lc},R)$ satisfies $\gtp_i'\cap{\mathcal S}(X,R)=\mu^{-1}(\beta_i)$ for $i=1,\ldots,d$\em.
\end{substeps}

Indeed, fix $H\in\gtp_i'\cap{\mathcal S}(X,R)$. There exist $H_j\in\mu^{-1}(\beta_i)$ and $g_j\in{\mathcal S}(M_{\lc},R)$ for $j=1,\ldots,r$ satisfying $H=H_1g_1+\ldots+H_rg_r$. The semialgebraic functions
$$
g':=\frac{g}{1+|g_1|+\cdots+|g_r|}\quad\text{and}\quad g_i':=\frac{g_ig}{1+|g_1|+\cdots+|g_r|}
$$
can be extended by zero to respective semialgebraic functions $G',G_i'\in{\mathcal S}(X,R)$, so
$$
G'H=H_1G'_1+\ldots+H_rG'_r\in\mu^{-1}(\beta_i).
$$
Since $Z(G')=X\setminus M_{\lc}$, we conclude $G'\not\in\mu^{-1}(\beta_i)$ and therefore $H\in\mu^{-1}(\beta_i)$. 

\begin{substeps}{2}
We show now: \em $\gtp_i'$ is a prime ideal of ${\mathcal S}(M_{\lc},R)$ for $i=1,\ldots,d$\em. Consequently, $\gtp_i=\gtp_i'\cap{\mathcal S}(M,R)$ is a prime ideal of ${\mathcal S}(M,R)$.
\end{substeps}

Indeed, let $a_1,a_2\in{\mathcal S}(M_{\lc},R)$ be such that $a_1a_2\in\gtp_i'$. Observe that $\frac{a_j}{1+|a_j|}g$ can be extended by zero to a semialgebraic function $A_i\in{\mathcal S}(X,R)$, so
$$
A_1A_2\in\mu^{-1}(\beta_i){\mathcal S}(M,R)\cap{\mathcal S}(X,R)=\mu^{-1}(\beta_i).
$$
Thus, we may assume $A_1\in\mu^{-1}(\beta_i)$ and therefore $a_1\frac{g}{1+|a_1|}\in\gtp_i'$. As $\frac{g}{1+|a_1|}$ is a unit in ${\mathcal S}(M_{\lc},R)$, we deduce $a_1\in\gtp_i'$; hence, $\gtp_i'$ is a prime ideal.

\begin{substeps}{2}
Now we prove: \em $\gtp_i$ is a prime $z$-ideal of ${\mathcal S}(M,R)$ for $i=1,\ldots,d$\em. \end{substeps}

Indeed, let $f_1\in\gtp_i=\gtp_i'\cap{\mathcal S}(M,R)$ and $f_2\in{\mathcal S}(M,R)$ be such that $Z_M(f_1)\subset Z_M(f_2)$; hence, $Z_{M_{\lc}}(f_1)\subset Z_{M_{\lc}}(f_2)$. As $\gtp_i'$ is a prime $z$-ideal (see \S\ref{sdtd}) because $M_{\lc}$ is locally closed, $f_2|_{M_{\lc}}\in\gtp_i'$ and so $f_2\in\gtp_i'\cap{\mathcal S}(M,R)=\gtp_i$.

\begin{substeps}{2}
Let us show next: \em $\gtp_i\subset\gtp_0$ for $i=1,\ldots,d$\em.
\end{substeps}

Indeed, let $h\in\gtp_i$, $H_j\in\mu^{-1}(\beta_i)$ and $g_j\in{\mathcal S}(M_{\lc})$ for $j=1,\ldots,r$ such that 
$$
h=H_1g_1+\cdots+H_rg_r. 
$$
As $M_{\lc}$ is open in $X$, the semialgebraic set $Z(H_1^2+\cdots+H_r^2)\setminus M_{\lc}$ is closed in $X$. Pick $B_1,B_2\in{\mathcal S}(X,R)$ such that $Z(B_1)=Z(H_1^2+\cdots+H_r^2)\setminus M_{\lc}$ and $Z(B_2)=\cl_X(Z(H_1^2+\cdots+H_r^2)\cap M_{\lc})$. Observe $Z(B_1B_2)=Z(H_1^2+\cdots+H_r^2)$ and as $\gtp_i$ is a prime $z$-ideal and $(H_1|_M)^2+\cdots+(H_r|_M)^2\in\gtp_i$, we conclude $B_1|_MB_2|_M\in\gtp_i$. Note that $B_1|_{M_{\lc}}$ is a unit in ${\mathcal S}(M_{\lc})$; hence, $B_1|_M\not\in\gtp_i$ and so $B_2|_M\in\gtp_i$. Thus, $B_2\in\gtp_i\cap{\mathcal S}(X,R)=\mu^{-1}(\beta_i)$. By Lemma \ref{st1} we know $\varphi\circ{\tt j}^*={\rm ev}_{X_F,{\tt j}_F({\tt p}_0)}\circ{\tt i}_{X,F}$ and we deduce $B_{2,F}({\tt p}_0)=\psi(B_2)=0$, as $B_2\in\mu^{-1}(\beta_i)\subset\gtp_0\cap{\mathcal S}(X,R)\subset\ker\varphi$. Therefore, ${\tt p}_0\in\cl(Z(H_1^2+\cdots+H_r^2)_F\cap M_{\lc,F})$ and by the curve selection lemma there exists a semialgebraic path $\alpha:[0,1]\to F^n$ such that 
\begin{equation}\label{mlc0}
\alpha((0,1])\subset Z(H_1^2+\cdots+H_r^2)_F\cap M_{\lc,F}=M_{\lc,F}\cap\bigcap_{j=1}^rZ(H_{j,F})
\end{equation}
and $\alpha(0)={\tt p}_0$. By Theorem \ref{boundedcase}, there exist $A_1,A_2\in{\mathcal S}(X,R)$ such that $A_2h-A_1\in\gtp_0$, $A_{2,F}({\tt p}_0)$ and $\varphi(h)=\tfrac{A_{1,F}({\tt p}_0)}{A_{2,F}({\tt p}_0)}$. Thus,
$$
\varphi(A_2|_Mh)=A_{2,F}({\tt p}_0)\varphi(h)=A_{1,F}({\tt p}_0).
$$
By \eqref{mlc0} the semialgebraic functions $H_{j,F}\circ\alpha$ are identically zero and the semialgebraic functions $g_{j,F}\circ\alpha$ are well-defined on the interval $(0,1]$ of $F$. Thus,
$$
A_{1,F}({\tt p}_0)=\lim_{t\to0^+}(A_{1,F}\circ\alpha)(t)=\lim_{t\to0^+}(A_{2,F}\circ\alpha)(t)\Big(\sum_{j=1}^r(H_{j,F}\circ\alpha)(g_{j,F}\circ\alpha)\Big)(t)=\lim_{t\to0^+}0=0;
$$
hence, $\varphi(h)=\frac{A_{1,F}({\tt p}_0)}{A_{2,F}({\tt p}_0)}=0$ and we conclude $\gtp_i\subset\gtp_0:=\ker\varphi$.

\begin{substeps}{2}
Finally, we prove: {\em ${\tt d}_M(\gtp_i)=\tr\deg_R(\qf({\mathcal S}(M,R)/\gtp_0))+i$ for $i=1,\ldots,d$}. 
\end{substeps}

Indeed, as $\gtP_{\beta_{i+1}}\subsetneq\gtP_{\beta_i}$ and each $\gtP_{\beta_i}:=\supp(\beta_i)$ is a real ideal, it holds 
\begin{equation}\label{>}
\dim(Z(\gtP_{\beta_i}))>\dim(Z(\gtP_{\beta_{i-1}}))
\end{equation}
for $i=1,\ldots,d$. Therefore 
$$
\dim(Z(\gtP_{\beta_d}))\!\overset{\eqref{>}}{\geq}\!d+\dim(Z(\gtP_{\beta_0}))=d+\dim(\beta_0)\overset{\eqref{d}}{=}\dim_{\beta_0}(\widetilde{M}_{\lc})\!\overset{\eqref{def}}{\geq}\!\dim(\beta_d)=\dim(Z(\gtP_{\beta_d})).
$$
We deduce $\dim(Z(\gtP_{\beta_d}))=d+\dim(Z(\gtP_{\beta_0}))$ and so
$$
\dim(Z(\gtP_{\beta_i}))=\dim(Z(\gtP_{\beta_0}))+i
$$
for $i=1,\ldots,d$. By \eqref{pol}
$$
{\tt d}_X(\mu^{-1}(\beta_i))=\dim(Z(\gtP_{\beta_i}))=\dim(Z(\gtP_{\beta_0}))+i
={\tt d}_X(\mu^{-1}(\beta_0))+i.
$$

As $\gtp_i$ is a $z$-ideal and $\gtp_i\cap{\mathcal S}(X,R)=\mu^{-1}(\beta_i)$, we obtain by (\ref{sdtd}.\ref{rmdc})
\begin{multline*}
{\tt d}_X(\mu^{-1}(\beta_i))=\tr\deg_R(\qf({\mathcal S}(X,R)/\mu^{-1}(\beta_i)))\\
\leq\tr\deg_R(\qf({\mathcal S}(M,R)/\gtp_i)={\tt d}_M(\gtp_i))
\leq{\tt d}_X(\mu^{-1}(\beta_i)),
\end{multline*}
that is, ${\tt d}_M(\gtp_i)={\tt d}_X(\mu^{-1}(\beta_i))$. Now since $X$ is a brimming semialgebraic pseudo-compact\-ifica\-tion of $M$ for $\gtp_0$, we conclude by (\ref{sdtd}.\ref{rmd})
\begin{multline*}
{\tt d}_M(\gtp_i)={\tt d}_X(\mu^{-1}(\beta_i))={\tt d}_X(\mu^{-1}(\beta_0))+i\\
=\tr\deg_R(\qf({\mathcal S}(X,R)/(\gtp_0\cap{\mathcal S}(X,R))))+i=\tr\deg_R(\qf({\mathcal S}(M,R)/\gtp_0))+i.
\end{multline*}

\noindent{\bf S1.d.} Pick $f\in\gtp_1$ such that $\dim(Z(f))={\tt d}_M(\gtp_1)$ and define $N_0:=Z(f)$. By \cite{dk} the map $\psi:{\mathcal S}(M,R)\to{\mathcal S}(N_0,R),\, f\mapsto f|_{N_0}$ is an epimorphism. Since $\gtp_1$ is a $z$-ideal and $f\in\gtp_1$, we obtain $\ker\psi\subset\gtp_1\subset\gtp_0$. Therefore $\gtq_i:=\gtp_i/\ker\psi$ is a prime ideal of ${\mathcal S}(N_0,R)={\mathcal S}(M,R)/\ker\psi$ and ${\mathcal S}(M,R)/\gtp_i\cong{\mathcal S}(N_0,R)/\gtq_i$ for $i=0,1$. Using the fact that $\gtp_1$ is a $z$-ideal, one proves directly that $\gtq_1:=\gtp_1/\ker\psi$ is a $z$-ideal of ${\mathcal S}(N_0,R)$. Thus,
$$
\tr\deg_R(\qf({\mathcal S}(N_0,R)/\gtq_0))+1=\tr\deg_R(\qf({\mathcal S}(M,R)/\gtp_0))+1={\tt d}_M(\gtp_1)=\dim(N_0).
$$
We claim: \em $(T_0:=\cl_X(N_0),{\tt j}|_{T_0})$ is a brimming pseudo-compactification of $N_0$ for $\gtq_0$\em.

Indeed, consider the commutative diagram
$$
\xymatrix{
{\mathcal S}(X,R)\ar@{>>}[rr]\ar@{^(->}[d]&&{\mathcal S}({T_0},R)\ar@{^(->}[d]\\
{\mathcal S}(M,R)\ar@{>>}[rr]&&{\mathcal S}(N_0,R)
}
$$
where the rows are epimorphisms and the columns are monomorphisms. Since $\gtq_0:=\gtp_0/\ker\psi$, we obtain
$$
{\mathcal S}({T_0},R)/(\gtq_0\cap{\mathcal S}({T_0},R))\cong{\mathcal S}(X,R)/(\gtp_0\cap{\mathcal S}(X,R)) 
$$
and conclude
\begin{multline*}
\qf({\mathcal S}({T_0},R)/(\gtq_0\cap{\mathcal S}({T_0},R)))\cong\qf({\mathcal S}(X,R)/(\gtp_0\cap{\mathcal S}(X,R)))\\
\cong\qf({\mathcal S}(M,R)/\gtp_0)\cong\qf({\mathcal S}(N_0,R)/\gtq_0). 
\end{multline*}

\noindent{\bf Step 2.} {\em Construction of the semialgebraic set $N$ in the statement}. By Lemma \ref{se0} there exists an appropriately embedded semialgebraic set $N\subset R^n$ and a surjective semialgebraic map $H:T:=\cl(N)\to {T_0}:=\cl(N_0)$ whose restriction $h:=H|_N:N\to N_0$ is a semialgebraic homeomorphism. Then it holds: \em $T$ is a brimming pseudo-compactification of $N$ for $h^*(\gtq_0)$, i.e,
\begin{equation}\label{brimt}
\qf({\mathcal S}(T,R)/h^*(\gtq_0)\cap{\mathcal S}(T,R))=\qf({\mathcal S}(N,R)/h^*(\gtq_0)).
\end{equation}
\em

Indeed, $H$ induces an injective homomorphism $H^*:{\mathcal S}(T_0,R)\hookrightarrow{\mathcal S}(T,R)$ that makes the following diagram commutative
$$
\xymatrix{
{\mathcal S}(T_0,R)\ar@{^(->}[rr]^{H^*}\ar@{^(->}[d]&&{\mathcal S}(T,R)\ar@{^(->}[d]\\
{\mathcal S}(N_0,R)\ar[rr]^{h^*}_{\cong}&&{\mathcal S}(N,R)
}
$$
Thus, we get the following inclusion of fields
\begin{multline*}
\qf({\mathcal S}(N_0,R)/\gtq_0)=\qf({\mathcal S}({T_0},R)/\gtq_0\cap{\mathcal S}(T_0,R))\subset\qf({\mathcal S}(T,R)/h^*(\gtq_0)\cap{\mathcal S}(T,R))\\
\subset\qf({\mathcal S}(N,R)/h^*(\gtq_0))\cong\qf({\mathcal S}(N_0,R)/\gtq_0)
\end{multline*}
and, comparing transcendence degrees, we conclude that equality \eqref{brimt} holds.

\noindent{\bf Step 3.} {\em Construction of the (core) point ${\tt p}\in F^n$ in the statement.} Consider the $R$-homomor\-phism $\ol{\varphi}:{\mathcal S}(N,R)\to F$ such that $\varphi=\ol{\varphi}\circ h^*\circ{\tt i}^*$ where ${\tt i}^*:{\mathcal S}(M,R)\to{\mathcal S}(N_0,R),\ g\mapsto g|_{N_0}$ is the homomorphism induced by the inclusion ${\tt i}:N_0\hookrightarrow M$. Let ${\tt p}:=(\ol{\varphi}(\pi_1),\ldots,\ol{\varphi}(\pi_n))$ where $\pi_i:R^n\to R$ is the projection onto the $i$th coordinate. By (\ref{sdtd}.\ref{rmd}) we have
\begin{multline*}
\dim(N_{F,{\tt p}})=\dim(T_{F,{\tt p}})\geq{\tt d}_T({\tt p})=\tr\deg_R\qf({\mathcal S}(T,R)/h^*(\gtq_0)\cap{\mathcal S}(T,R))\\
=\dim(N_0)-1=\dim(N)-1\geq\dim(N_{F,{\tt p}})-1.
\end{multline*}
If ${\tt p}\in N_F$ (which includes the case ${\tt d}_T({\tt p})=\dim( N_{F,{\tt p}})$) or if ${\tt d}_T({\tt p})=\dim(N_{F,{\tt p}})-1$ and ${\tt p}\not\in N_F$, we deduce by Lemma \ref{st1} that $\ol{\varphi}=\psi_{\tt p}:{\mathcal S}(N,R)\to F$ is the unique $R$-homomorphism from ${\mathcal S}(N,R)$ to $F$ whose core is ${\tt p}$.
\qed

\bibliographystyle{amsalpha}

\end{document}